\theoremstyle{definition}
\theoremstyle{plain}
\newtheorem{Thm}{Theorem}
\newtheorem{Lem}[Thm]{Lemma}
\newtheorem{Cor}[Thm]{Corollary}
\newtheorem{Prop}[Thm]{Proposition}
\numberwithin{equation}{section}	
\numberwithin{figure}{Prob}
\numberwithin{table}{Prob}
\numberwithin{Thm}{section}
\newcommand{\Z}{\mathbb{Z}}
\newcommand{\C}{\mathbb{C}}
\renewcommand{\S}{\mathfrak{S}}
\newcommand{\Span}{\text{Span}}
\title{Almost Commutative Terwilliger Algebras I: The Group Association Scheme}
\author{Nicholas L. Bastian, Stephen P. Humphries}
\date{\today}
\begin{document}

\maketitle

\begin{abstract}
Terwilliger algebras are a subalgebra of a matrix algebra constructed from an association scheme. Rie Tanaka defined what it means for a Terwilliger algebra to be almost commutative and gave five equivalent conditions. In this paper we first determine an equivalent sixth condition for a Terwilliger algebra coming from a commutative Schur ring to be almost commutative. We then provide a classification of which finite groups result in an almost commutative Terwilliger algebra when looking at the group association scheme determined by the conjugacy classes. In particular, we show that all such groups are either abelian, or Camina groups. We then compute the dimension of each Terwilliger algebra, and we also express each of the group association schemes with an almost commutative Terwilliger algebra as a wreath product of the group schemes of finite abelian groups and $1-$class association schemes. Furthermore, we give the non-primary primitive idempotents for each Terwilliger algebra for those groups.     \end{abstract}

\textbf{Keywords}:
Terwilliger algebra, Camina group, association scheme, group association scheme, Schur ring, centralizer algebra\\

\textbf{MSC 2020 Classification}: 05E30, 05E16

\section{Introduction}

Terwilliger algebras were originally developed in the 1990's by Paul Terwilliger to study commutative association schemes. In particular, he looked at P-Polynomial and Q-Polynomial association schemes. Over the course of the three papers \cite{Terwilliger1,Terwilliger2,Terwilliger3} in which Terwilliger algebras were originally introduced, Terwilliger finds a combinatorial characterization of thin P- and Q-Polynomial association schemes. Since that time Terwilliger algebras have been studied for many types of association schemes. One of the most common is the group association scheme (\cite{GroupAssoc1, Bannaiarticle, Bastian, GroupAssoc2}). 

Tanaka \cite{Tanaka} determined a situation in which the Terwilliger algebra has a particular Wedderburn structure (see Theorem \ref{thm:tanaka}) called almost commutative. One of Tanaka's results is that when a Terwilliger algebra is almost commutative, the corresponding association scheme can be written as a wreath product.

In this paper the group $G$ will always be finite and $Q_8$ will be the quanterion group with $8$ elements. For $x\in G$, we let $x^G$ be the conjugacy class of $x$. We let $T(G)$ denote the Terwilliger algebra for the group association scheme, denoted $\mathcal{G}(G)$, of $G$ with base point $e$. We let $\mathcal{K}_n$ denote the trivial association scheme for a set of size $n$. The main result is as follows:

\begin{Thm}\label{thm:main}
         $T(G)$ is almost commutative if and only if $G$ is isomorphic to one of the following:
        \begin{enumerate}[label=(\roman*)]
        \itemindent=-8pt
        \item An abelian group. In this case $\dim T(G)=|G|^2$ and there is a single Wedderburn component of dimension $|G|^2$.
        \item The Frobenius group $\Z_p^r\rtimes \Z_{p^{r}-1}$, for prime $p$ and $r>0$. In this case $\dim T(G)=p^{2r}+p^r-1$ and $\mathcal{G}(G)=\mathcal{K}_{p^r}\wr \mathcal{G}(\Z_{p^r-1})$.
        \item A Camina $p-$group of order $p^n$ with nilpotency class $2$ for prime $p$. If $|Z(G)|=p^k$, then \[\dim T(G)=(p^{n-k}+p^k-1)(p^{n-k}+p^k-2)+p^n.\]
        \hspace{-0.4cm} Also $\mathcal{G}(G)=\mathcal{G}(Z(G))\wr \mathcal{G}(G/Z(G))$.
        \item A Camina $p-$group with nilpotency class $3$ for prime $p$. If $|Z(G)|=p^k$ and $|G'\colon Z(G)|=p^{n}$, then \[\dim T(G)=3p^{k+n}+3p^{k+2n}+p^{2k}-6p^k+p^{4n}+3p^{3n}-5p^{2n}-6p^n+7.\] 
        \hspace{-0.4cm} Also $\mathcal{G}(G)=\mathcal{G}(Z(G))\wr \mathcal{G}(G'/Z(G))\wr \mathcal{G}(G/G')$.
        \item The group $\Z_3^2\rtimes Q_8$. In this case $\dim T(G)=44$ and $\mathcal{G}(G)=\mathcal{K}_9\wr \mathcal{G}(\Z_2)\wr \mathcal{G}(\Z_2^2)$.
    \end{enumerate}
    \end{Thm}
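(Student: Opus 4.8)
The plan is to run the equivalence through two machines and then translate the output into group theory. First, $Z(\mathbb{C}[G])$ is a commutative Schur ring whose associated scheme is $\mathcal{G}(G)$, so the sixth condition for almost commutativity established earlier in the paper applies; combined with Tanaka's theorem (Theorem~\ref{thm:tanaka}) it rephrases ``$T(G)$ is almost commutative'' as the statement that $\mathcal{G}(G)$ is an iterated wreath product whose factors are all either trivial schemes $\mathcal{K}_m$ or thin schemes (schemes of finite abelian groups), arranged in the pattern permitted by Tanaka's classification. Second, wreath decompositions of a group scheme are governed by Camina pairs: for $N\normal G$ one has $\mathcal{G}(G)=(\mathcal{G}(G)|_N)\wr\mathcal{G}(G/N)$ if and only if $(G,N)$ is a Camina pair, i.e.\ $gN\subseteq g^{G}$ for every $g\in G\setminus N$; moreover $\mathcal{G}(G)|_N$ is thin exactly when $N\le Z(G)$ and equals $\mathcal{K}_{|N|}$ exactly when $N\setminus\{e\}$ is a single conjugacy class of $G$, while the quotient scheme is always $\mathcal{G}(G/N)$. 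Iterating, $T(G)$ is almost commutative precisely when $G$ carries a chain $\{e\}=N_0\normal N_1\normal\cdots\normal N_\ell=G$ of normal subgroups, each $(G,N_i)$ a Camina pair, whose successive layers are thin or trivial in the allowed pattern.

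\noindent\textbf{Sufficiency.}
For (i), every class of an abelian group is a singleton, $\mathcal{G}(G)$ is the thin scheme on $|G|$ points, $T(G)=M_{|G|}(\mathbb{C})$, and $\dim T(G)=|G|^2$. For (ii)--(v) one exhibits the chain and checks the Camina-pair conditions directly: for a Camina $p$-group of class $2$ take $N_1=Z(G)=G'$; for class $3$ take $N_1=Z(G)$ and $N_2=G'$, using the standard fact that $x^G=xZ(G)$ for $x\in G'\setminus Z(G)$; for $\Z_p^r\rtimes\Z_{p^r-1}$ and for $\Z_3^2\rtimes Q_8$ take $N_1$ to be the elementary abelian kernel (its non-identity elements fuse into one class because the complement acts transitively on them) and then recurse on the complement, using $\mathcal{G}(Q_8)=\mathcal{G}(\Z_2)\wr\mathcal{G}(\Z_2^2)$ since $Q_8$ is a Camina $2$-group of class $2$. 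In each case the resulting iterated-wreath description is the one asserted, so $T(G)$ is almost commutative by Tanaka's theorem; the dimension formulas then follow by applying the known formula for the dimension of the Terwilliger algebra of a wreath product (\cite{WreathProd,WreathProduct1}) recursively along the chain.

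\noindent\textbf{Necessity.}
This is the main content. Assume $T(G)$ is almost commutative and $G$ is nonabelian; by the reduction there is a nontrivial $N\normal G$ with $(G,N)$ a Camina pair, $\mathcal{G}(G)|_N$ thin or trivial, and $T(G/N)$ again almost commutative, so by induction on $|G|$ the quotient $G/N$ is on the list. If $\mathcal{G}(G)|_N$ is thin then $N\le Z(G)$, so $(G,N)$ is a central Camina pair; the theory of such pairs (Camina~\cite{Camina}, Macdonald~\cite{MacDonald}, Chillag--Macdonald~\cite{Chillag1}, Dark--Scoppola~\cite{Dark1}, Lewis~\cite{Lewis1,Lewis3}) forces $G$ to be a $p$-group, which one then shows is a Camina $p$-group, hence of nilpotency class $2$ or $3$, giving (iii) or (iv) together with the layer analysis above. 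If $\mathcal{G}(G)|_N=\mathcal{K}_{|N|}$ then $N\setminus\{e\}$ is a single class, forcing $N\cong\Z_p^r$ with $G$ transitive on $N\setminus\{e\}$; the constraints imposed by Tanaka's wreath condition on $\mathcal{K}_{p^r}\wr\mathcal{G}(G/N)$ then force $G$ to be a Frobenius group with kernel $N$ and complement $H$ of order $p^r-1$ acting sharply transitively on $\Z_p^r\setminus\{e\}$, with $\mathcal{G}(H)$ itself almost commutative. By Zassenhaus' classification of sharply $2$-transitive groups (equivalently, of finite near-fields), $H$ is the multiplicative group of a Dickson or exceptional near-field; running through this finite list and checking which entries have almost commutative group-scheme Terwilliger algebra---which, by the inductive hypothesis, means $H$ is abelian or on the list---leaves only the cyclic $H\cong\Z_{p^r-1}$ (case (ii); an abelian group acting sharply transitively on $\Z_p^r\setminus\{e\}$ is the Singer cycle) and $H\cong Q_8$ with $p^r=9$ (case (v)). The dimensions in the surviving cases are computed exactly as in the sufficiency step.

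\noindent\textbf{Main obstacle.}
The hard part is the necessity direction, and within it two points demand care. The first is extracting from the algebraic sixth condition the precise group-theoretic shape of the Camina chain: ruling out hybrid examples that are neither $p$-groups nor Frobenius, and pinning down the ``at most one trivial factor, occurring first'' pattern. The second is the Frobenius case, where one must marry the sharp-transitivity constraint on the complement with the recursive requirement that its own Terwilliger algebra be almost commutative, and then invoke the classification of finite near-fields to eliminate every complement except the cyclic ones and the sporadic $Q_8$ over $\mathbb{F}_9$; confirming that $\Z_3^2\rtimes Q_8$ genuinely occurs, and that no larger Dickson near-field produces a further example, is the delicate endgame and may be verified computationally (\cite{Magma}).
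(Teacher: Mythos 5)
Your overall translation of almost commutativity into wreath/Camina-pair language is sound in outline (it is essentially Theorem \ref{thm:sequiv} combined with Theorem \ref{thm:tanaka}), but the necessity direction as written is not a proof: it is a plan to rederive from scratch a classification that is already a published theorem and that the paper simply cites. The condition produced by Theorem \ref{thm:sequiv} --- $x^Gy^G=(xy)^G$ whenever $x^G\neq (y^{-1})^G$ --- is exactly the hypothesis of the Dade--Yadav theorem (\cite{Product_Conj}, quoted as Theorem \ref{thm:groupequiv}), so the group list follows in one line; your substitute argument leaves its decisive steps unproved: that a central Camina pair forces $G$ to be a Camina $p$-group of class $2$ or $3$, that in the $\mathcal{K}_{|N|}$ case the wreath constraints force $G$ to be Frobenius over $N$ with sharply transitive complement, and that the near-field classification eliminates every complement except $\Z_{p^r-1}$ and $Q_8$ acting on $\Z_3^2$. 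You flag these as ``the main obstacle,'' but they are the theorem. Moreover your dichotomy is misstated: $\mathcal{G}(G)|_N=\mathcal{K}_{|N|}$ does not force $G$ Frobenius, because $N$ may be central (any Camina $p$-group with $|Z(G)|=p$, e.g.\ an extraspecial group, has $N=Z(G)=G'$ with $N\setminus\{e\}$ a single class), so you would need to separate $N\cap Z(G)=\{e\}$ from $N\leq Z(G)$ before invoking Frobenius/near-field theory. Finally, the identification of the inner wreath factor with a normal subgroup $N$, and of the outer factor with $\mathcal{G}(G/N)$, itself needs an argument (one must show the union of the inner classes together with $e$ is a normal subgroup, and equality of the quotient scheme with $\mathcal{G}(G/N)$ uses the Camina-pair property); you assert these rather than prove them.

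The sufficiency direction is workable and close in spirit to Section 5 of the paper (the paper proves the wreath decompositions by exhibiting the adjacency matrices as explicit Kronecker products; your Camina-pair criterion is a reasonable alternative, and your claim that $\mathcal{G}(\Z_3^2\rtimes Q_8)=\mathcal{K}_9\wr\mathcal{G}(Q_8)$ with $\mathcal{G}(Q_8)=\mathcal{G}(\Z_2)\wr\mathcal{G}(\Z_2^2)$ does match Proposition \ref{prop:72,41wreath}), but the dimension statements are not established: the paper obtains them by counting the nonzero intersection numbers, using the triple regularity guaranteed by Theorem \ref{thm:tanaka} (Theorems \ref{cor:frobdim}--\ref{cor:p3dim}), whereas you appeal to an unspecified recursive wreath-product dimension formula without verifying that it reproduces, for instance, the class-$3$ polynomial in (iv). The cleanest repair is to keep your sufficiency/wreath analysis, replace the necessity sketch by the appeal to \cite{Product_Conj} via Theorem \ref{thm:sequiv}, and carry out the intersection-number counts (or the wreath-product dimension computation) explicitly.
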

    That the groups $(i)-(v)$ are the only possibilities follows from Theorems \ref{thm:sequiv} and \ref{thm:groupequiv}. 

    Theorem \ref{thm:main} will be used in determining a classification of all groups for which the Terwilliger algebra resulting from a strong Gelfand pair is almost commutative \cite{Bastiansgp}.

This paper is organized as follows. Section $2$ introduces Terwilliger algebras and defines almost commutative. The main result of this section is Theorem \ref{thm:sequiv}, which is a sixth equivalent condition for the Terwilliger algebra coming from a commutative Schur ring to be almost commutative. In Section $3$ we discuss Camina groups. In Section $4$ we will give the classification of almost commutative Terwilliger algebras for group association schemes. We also find $\dim T(G)$ in each case. Then in Section $5$ we determine how to write each of the group schemes having an almost commutative Terwilliger algebra as a wreath product of simpler association schemes. After doing this we find the Wedderburn components of each of these Terwilliger algebras using results from \cite{WreathProduct1}.\\
\textbf{Acknowledgment:} All computations made in the preparation of this paper were performed using Magma \cite{Magma}.

\section{Terwilliger Algebras}

Let $\Omega$ be a finite set. Suppose $A_0,A_1,\cdots, A_d\in M_{|\Omega|}(\C)$ are $0,1$ matrices with rows and columns indexed by the elements of $\Omega$. Let $A^t$ denote the transpose of the matrix $A$. If additionally 
\begin{enumerate}[leftmargin=*]
    \item $A_0=I_{|\Omega|}$;
    \item for all $i=1,2,\dots, d$, we have $A_i^t=A_j$ for some $j=1,2,\dots ,d$;
    \item for all $i,j\in \{1,2,\dots, d\}$ we have $A_iA_j=\sum_{k=0}^d p_{ij}^k A_k$;
    \item none of the $A_i$ is equal to $0_{|\Omega|}$ and $\sum_{i=0}^d A_i$ is the all $1$ matrix;
\end{enumerate}
then we say that $\mathcal{A}=(\Omega,\{A_i\}_{0\leq i\leq d})$ is an \emph{association scheme}. We call $A_0,A_1,\cdots, A_d$ the \emph{adjacency matrices} of the association scheme and the constants $p_{ij}^k$ the \emph{intersection numbers}. If $A_iA_j=A_jA_i$ for all $i,j$, we have a \emph{commutative association scheme}. 

Let $\mathcal{X}=(X,\{A_i\}_{0\leq i\leq c})$ and $\mathcal{Y}=(Y,\{B_i\}_{0\leq j\leq d})$ be two association schemes where $|X|=m$ and $|Y|=n$. Then the \emph{wreath product} of $\mathcal{X}$ and $\mathcal{Y}$, denoted $\mathcal{X}\wr \mathcal{Y}$, is defined on the set $X\times Y$ with adjacency matrices
\[D_0=B_0\otimes A_0,\ D_k=I_n\otimes A_k,\ (\text{for $1\leq k\leq c$}),\ D_{c+k}=B_k\otimes J_m,\ (\text{ for $1\leq k\leq d$}).\]

The association schemes that we will be interested in correspond to Schur rings. Let $R$ be a commutative ring with $1$. For $C\subseteq G$, let $\overline{C}=\sum_{g\in C} g\in R[G]$ and $C^*=\{g^{-1}\colon g\in G\}$.

Let $\mathcal{P}$ be a partition of $G$. We say $\mathfrak{S}=\Span_R\{\overline{C}|C\in \mathcal{P}\}$ is a \emph{Schur ring} (see \cite{Ma89,Wielandt49}) if:
\begin{enumerate}[leftmargin=*]
    \item $\{e\} \in \mathcal{P};$
    \item if $C \in \mathcal{P}$, then $C^* \in \mathcal{P};$
    \item for all $C, D \in \mathcal{P}$,
$\overline{C}\cdot\overline{D} = \sum_{E\in \mathcal{P}} \lambda_{CDE}\overline{E}$.
\end{enumerate}

We call the sets in $\mathcal{P}$ the \emph{principal sets} of $\S$ and the constants $\lambda_{CDE}$ the \emph{structure constants}.

Schur rings give rise to association schemes in the following way. We define $|G|\times |G|$ matrices $A_i$, $0\leq i\leq d$, by
\[(A_i)_{xy}=\left\{\begin{array}{cc}
    1 & \text{ if } yx^{-1}\in P_i  \\
    0 & \text{ otherwise.} 
\end{array}\right.\]
With this construction $(G,\{A_i\}_{0\leq i\leq d})$ is an association scheme. If the Schur ring is commutative then the association scheme is commutative. The structure constants $\lambda_{ijk}$ coming from $\overline{P_i}\cdot \overline{P_j}=\sum_{P_k\in \mathcal{P}}\lambda_{ijk}\overline{P_k}$ are the intersection numbers of the association scheme. In particular, partitioning $G$ by the conjugacy classes of $G$, we get a Schur ring. This gives a commutative association scheme, called the \emph{group association scheme}, denoted $\mathcal{G}(G)$. 

Returning to arbitrary association schemes, we note that $\mathfrak{A}=\Span_\C\{A_i\colon 0\leq i\leq d\}$ is an algebra, called the \emph{Bose-Mesner algebra} of the association scheme. We let $E_0,E_1,\dots, E_d$ be the primitive idempotents of $\mathfrak{A}$. As $\mathfrak{A}$ is closed under \emph{Hadamard product}, denoted $\circ$, we have $E_i\circ E_j\in \mathfrak{A}$. Then there are $q_{ij}^k\in \C$, called the \emph{Krein parameters}, such that
\[E_i\circ E_j=\frac{1}{|\Omega|}\sum_{k=0}^d q_{ij}^kE_k.\]

Fix $x\in \Omega$. We further define diagonal matrices $E_i^*(x)$ \emph{with base point $x$} by the rule
\[(E_i^*(x))_{yy}=\Bigg\{ \begin{array}{cc}
    1  & (x,y)\in R_i \\
    0 & \text{otherwise}.
\end{array}\]
We call $\mathfrak{A}^*(x)=\Span_\C\{E_i^*(x)\colon 0\leq i\leq d\}$ the \emph{dual Bose-Mesner Algebra} of $\mathcal{A}$ with respect to $x$.  

Now we define the \emph{Terwilliger algebra} with base point $x\in \Omega$ of the association scheme $\mathcal{A}=(\Omega,\{A_i\}_{0\leq i\leq d})$ to be the subalgebra of $M_{|\Omega|}(\C)$ generated by $\mathfrak{A}$ and $\mathfrak{A}^*(x)$. We denote this by $T(x)$. For the group association scheme we always take the base point $x$ to be the identity $e\in G$. We let $T(G)$ denote the Terwilliger algebra for $\mathcal{G}(G)$.

Let $T_0(x)=\Span_\C \{E_i^*(x)A_jE_k^*(x)\colon 0\leq i,j,k\leq d\}$.
We note $\dim T_0(x)=|\{(i,j,k)\colon p_{ij}^k\neq 0\}|$ \cite{Bannaiarticle}. A Terwilliger algebra is \emph{triply regular} if $T(x)=T_0(x)$.

If $|\Omega|>1$, then the Terwilliger algebra with base point $x\in \Omega$, is non-commutative and semi-simple for all $x\in \Omega$ \cite{Terwilliger1}. As such, it has a Wedderburn decomposition. Given an association scheme $(\Omega,\{A_i\}_{0\leq i\leq d})$, the Terwilliger algebra for this association scheme with base point $x$ always has an irreducible ideal $V$ of dimension $(d+1)^2$, called the \emph{primary component} (see \cite{WreathProduct1}).

Closely related to the Wedderburn decomposition of the Terwilliger algebra are those $T(x)-$modules that are irreducible when $T(x)$ acts on $\C^{|\Omega|}$ by left multiplication. Then $\C^{|\Omega|}$ decomposes into a direct sum of irreducible $T(x)-$ modules. Letting $\hat{x}\in \C^{|\Omega|}$ have $1$ in position $x$ and $0$ elsewhere, it is shown in \cite{Terwilliger1} that $\mathfrak{A}\hat{x}$ is an irreducible $T(x)-$module of dimension $d+1$. We call this irreducible module the \emph{primary module}.

A Terwilliger algebra $T(x)$ is \emph{almost commutative} (AC) if every non-primary irreducible $T(x)-$module is $1-$ dimensional. There is a classification of such Terwilliger algebras when the underlying association scheme is commutative:

\begin{Thm}[Tanaka, \cite{Tanaka}]\label{thm:tanaka}  Let $\mathcal{A}=(\Omega,\{R_i\}_{0\leq i\leq d})$ be a commutative association scheme. Let $T(x)$ be the Terwilliger algebra of $\mathcal{A}$ for some $x\in \Omega$. The following are equivalent:
    \begin{enumerate}[leftmargin=*]
        \item Every non-primary irreducible $T(x)-$module is $1-$ dimensional for some $x\in \Omega$.
        \item Every non-primary irreducible $T(x)-$module is $1-$ dimensional for all $x\in \Omega$.
        \item The $p_{ij}^h$ satisfy: for all distinct $h,i$ there is exactly one $j$ such that $p_{ij}^h\neq 0$ $(0\leq h,i,j\leq d)$.
        \item The $q_{ij}^h$ satisfy: for all distinct $h,i$ there is exactly one $j$ such that $q_{ij}^h\neq 0$ $(0\leq h,i,j\leq d)$.
        \item $\mathcal{A}$ is a wreath product of association schemes $\mathcal{A}_1,\mathcal{A}_2,\cdots, \mathcal{A}_n$ where each $\mathcal{A}_i$ is either a $1-$class association scheme or the group scheme of a abelian group.
    \end{enumerate} 
Moreover, a Terwilliger algebra satisfying these equivalent conditions is triply regular.
\end{Thm}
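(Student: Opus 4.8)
The plan is to prove the cycle of implications $(5)\Rightarrow(2)\Rightarrow(1)\Rightarrow(3)\Rightarrow(5)$, to bring in condition $(4)$ through the formally dual version of the $(1)\Leftrightarrow(3)$ reasoning, and to read off the closing ``triply regular'' assertion from the analysis carried out in the step $(3)\Rightarrow(5)$. Only $(2)\Rightarrow(1)$ is immediate. For $(1)\Rightarrow(3)$ --- and hence, since $(3)$ is base-point free, also $(1)\Rightarrow(2)$ --- I would fix the base point $x$ and use two facts. First, $E_h^*(x)A_jE_i^*(x)\ne 0$ if and only if $p_{hj}^i\ne 0$, and for fixed $h\ne i$ these matrices have pairwise disjoint support, so $\dim E_h^*T_0(x)E_i^*=\#\{j:p_{hj}^i\ne 0\}\ge 1$; by the symmetries of the intersection numbers this count equals $1$ for every ordered pair $h\ne i$ exactly when $(3)$ holds. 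Second, $T(x)$ is semisimple with a single ``thick'' irreducible module, the primary module $\mathfrak A\hat x$, of dimension $d+1$, which meets each subconstituent $E_r^*\C^{|\Omega|}$ in dimension $1$; under hypothesis $(1)$ every other irreducible $T(x)$-module is a joint eigenvector of the commuting projections $E_r^*$, hence lies in a single subconstituent and contributes nothing to $E_h^*T(x)E_i^*$ for $h\ne i$. Combining these, $\dim E_h^*T(x)E_i^*=1$ for $h\ne i$, and since $T_0(x)\subseteq T(x)$ this forces $\dim E_h^*T_0(x)E_i^*=1$, i.e.\ condition $(3)$.

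The implication $(1)\Rightarrow(4)$ is the formal mirror image of the above: replace the subconstituents $E_i^*\C^{|\Omega|}$ by the Bose--Mesner eigenspaces $E_i\C^{|\Omega|}$ (a one-dimensional module, being a common eigenvector of the commutative algebra $\mathfrak A\subseteq T(x)$, lies in exactly one of them), replace the $p_{ij}^h$ by the Krein parameters $q_{ij}^h$, and use Terwilliger's dual-subconstituent bookkeeping; since the primary module is symmetric under $E_i\leftrightarrow E_i^*$ and ``one-dimensional'' is a self-dual property, this is genuinely the same argument. For $(5)\Rightarrow(2)$ I would induct on the number of wreath factors. The base cases are the only schemes allowed as factors in $(5)$: for the $1$-class scheme $\mathcal K_n$ a short computation gives $T(x)\cong M_2(\C)\oplus M_1(\C)$ (the $2$-dimensional primary module plus an $(n-2)$-fold copy of a single one-dimensional module), and for the group scheme $\mathcal G(A)$ of a finite abelian group $A$ the whole space $\C^{|A|}$ is the primary module, so there are no non-primary modules. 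For the inductive step, invoke the description of the Terwilliger algebra of a wreath product $\mathcal X\wr\mathcal Y$ from \cite{WreathProduct1} (see also \cite{WreathProd}): its non-primary irreducible modules are those coming from non-primary $T$-modules of $\mathcal X$ on the block containing the base point, those coming from non-primary $T$-modules of $\mathcal Y$ on the quotient scheme, and modules produced by the (commutative) Bose--Mesner algebra of $\mathcal X$ acting on the remaining blocks; the last are one-dimensional automatically, and the first two by the inductive hypothesis, so $\mathcal X\wr\mathcal Y$ is almost commutative at every base point.

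The crux, and the step I expect to be the main obstacle, is $(3)\Rightarrow(5)$. I would induct on $d$, peeling off one wreath factor per stage. The point is to locate, using $(3)$, a proper nontrivial closed subset $\mathcal T\subsetneq\{0,\dots,d\}$ of relation indices --- informally the ``fine'' relations, those that do not separate points lying in a common coarse block, characterized by a condition of the form $p_{ij}^h=0$ whenever $R_h$ is coarse --- so that $\bigcup_{i\in\mathcal T}R_i$ is an equivalence relation and every $R_h$ with $h\notin\{R_0\}\cup\mathcal T$ is constant on each ordered pair of $\mathcal T$-classes; both properties are forced by $(3)$, because the uniqueness of $j$ pins down, along chains of nonzero intersection numbers, how a coarse relation must behave across classes. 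This exhibits $\mathcal A=\mathcal B\wr\mathcal C$ with $\mathcal B$ the subscheme induced on a $\mathcal T$-class and $\mathcal C$ the quotient; both $\mathcal B$ and $\mathcal C$ inherit a condition of type $(3)$, so the induction runs once one verifies the atomic case --- a scheme satisfying $(3)$ with no proper nontrivial wreath decomposition has either all valencies equal to $1$ (hence, being commutative, is the group scheme of an abelian group) or is a $1$-class scheme. The delicate points I anticipate are getting the definition of $\mathcal T$ exactly right, so that the peeling terminates at precisely these two atomic types, and the verifications that $\mathcal T$ is closed and that the coarse relations are block-constant.

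Finally, the ``moreover'' clause drops out of the same analysis. Under $(3)$ any word $E_h^*A_jE_i^*A_lE_m^*$ collapses: the interior relation index is forced, and the resulting matrix lies in $E_h^*\mathfrak A E_m^*$, which is at most one-dimensional when $h\ne m$ (the case $h=m$ reducing to shorter words), so the word is again a single term $\lambda\,E_h^*A_sE_m^*$; hence every element of $T(x)$ already lies in $T_0(x)$, i.e.\ $T(x)=T_0(x)$. Tying $(4)$ into the cycle on the nose (rather than only deducing $(1)\Rightarrow(4)$) requires running the combinatorial reconstruction of the $(3)\Rightarrow(5)$ step at the level of C-algebras, so that it applies to the formal dual of $\mathcal A$, together with the facts that $\mathcal K_n$ and abelian group schemes are self-dual and that the formal dual of a wreath product is the wreath product of the duals in the reverse order; this is the second place where care is needed, since the formal dual of a genuine association scheme need not itself be realized by a scheme.
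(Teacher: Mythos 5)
You should note at the outset that the paper does not prove this statement at all: it is quoted verbatim from Tanaka \cite{Tanaka} and used as a black box, so there is no internal proof to compare yours against; the comparison has to be with what a complete proof would require. Judged on that basis, parts of your outline are sound and in the spirit of the literature: the derivation of $(1)\Rightarrow(3)$ by showing $\dim E_h^*(x)T(x)E_i^*(x)=1$ for $h\neq i$ (the central primitive idempotent of a one-dimensional Wedderburn component commutes with the $E_r^*(x)$, so only the primary component contributes off-diagonally, while $\dim E_h^*(x)T_0(x)E_i^*(x)=\#\{j\colon p_{hj}^i\neq 0\}\geq 1$) is correct, as are the base cases of $(5)\Rightarrow(2)$ and the appeal to the wreath-product structure of Terwilliger algebras from \cite{WreathProduct1,WreathProd} for the inductive step.

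However, the proposal has genuine gaps exactly where the content of Tanaka's theorem lies. First, $(3)\Rightarrow(5)$ is only a plan: you never actually define the closed subset $\mathcal{T}$ of ``fine'' relations, never verify that $\bigcup_{i\in\mathcal{T}}R_i$ is an equivalence relation, that the remaining relations are constant on pairs of blocks (so that one genuinely gets $\mathcal{A}=\mathcal{B}\wr\mathcal{C}$ with both factors inheriting condition $(3)$), or that the indecomposable case forces a thin scheme or a $1$-class scheme; you flag these yourself as ``delicate points,'' but they are the theorem, not details. Second, condition $(4)$ is never tied into the equivalence: you only sketch $(1)\Rightarrow(4)$, and the proposed repair --- rerunning $(3)\Rightarrow(5)$ at the level of C-algebras on the formal dual --- is not carried out and is nontrivial precisely for the reason you concede (formal duals of schemes need not be schemes), so $(4)\Rightarrow(1)$--$(3)$,$(5)$ is missing. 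Third, the ``triply regular'' clause is only justified for off-diagonal blocks: when $h\neq m$ the one-dimensionality of $E_h^*(x)T(x)E_m^*(x)$ does force $E_h^*(x)A_jE_i^*(x)A_lE_m^*(x)$ to be a multiple of $E_h^*(x)A_sE_m^*(x)$, but the diagonal case $h=m$ is not handled by ``reducing to shorter words''; there you must compare $\dim E_h^*(x)T(x)E_h^*(x)$, which picks up a contribution from every one-dimensional component meeting the $h$-th subconstituent, with $\#\{j\colon p_{hj}^h\neq 0\}$, and no such count is given. As the paper itself simply cites \cite{Tanaka}, the honest options are to do likewise or to supply these missing arguments following Tanaka's paper.
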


The focus of this paper is to determine exactly when $T(G)$ is AC and then study the resulting Terwilliger algebra. We now give a condition that will show when $T(G)$ is AC.

 \begin{Thm}\label{thm:sequiv}
    Let $\S$ be any commutative Schur ring for $G$ with principal sets $P_i$, $0\leq i\leq d$. Let $T(G,\S)$ be the Terwilliger algebra of $G$ with respect to $\S$. Then $T(G,\S)$ is AC if and only if $\S$ has the following property: for all $x,y\in G$ if $x\in P_i$, $y\in P_j$, $xy\in P_h$, and $P_i\neq P_j^*$ then $P_iP_j=P_h$. 
\end{Thm}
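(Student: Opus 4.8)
The plan is to obtain this from Tanaka's characterization (Theorem~\ref{thm:tanaka}) by rewriting its combinatorial condition~(3) in terms of products of the principal sets of $\S$. Since $\S$ is commutative, the association scheme $(G,\{A_i\}_{0\le i\le d})$ attached to $\S$ is commutative, so Theorem~\ref{thm:tanaka} applies to $T(G,\S)$ (taken with base point $e$): $T(G,\S)$ is AC if and only if the intersection numbers satisfy~(3), i.e.\ for every pair of distinct indices $h\neq i$ there is exactly one index $j$ with $p_{ij}^h\neq 0$. I will use that for a Schur-ring scheme the intersection numbers are the structure constants, $p_{ij}^h=\lambda_{ijh}$, where $\overline{P_i}\cdot\overline{P_j}=\sum_k\lambda_{ijk}\overline{P_k}$; here commutativity of $\S$ is again what makes this identification clean.

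The only substantive step is a combinatorial lemma: for fixed indices $i$ and $h$,
\[
\{\,j : \lambda_{ijh}\neq 0\,\}=\{\,j : P_j\subseteq P_i^{*}P_h\,\},
\]
where $P_i^{*}P_h=\{a^{-1}c : a\in P_i,\ c\in P_h\}$ is the usual product set. Indeed $\lambda_{ijh}\neq 0$ means there are $a\in P_i$, $b\in P_j$ with $ab\in P_h$, so $b=a^{-1}(ab)\in P_i^{*}P_h$; and since $P_i^{*}P_h$ is the support of $\overline{P_i^{*}}\cdot\overline{P_h}$ it is a union of principal sets, so $P_j\cap P_i^{*}P_h\neq\emptyset$ forces $P_j\subseteq P_i^{*}P_h$. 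Conversely, if $P_j\subseteq P_i^{*}P_h$, choose $b\in P_j$ and write $b=a'c$ with $a'\in P_i^{*}$, $c\in P_h$; then $a:=(a')^{-1}\in P_i$ and $ab=c\in P_h$, so $\lambda_{ijh}\neq 0$. It follows that $\#\{j : \lambda_{ijh}\neq 0\}$ is the number of distinct principal sets whose union is $P_i^{*}P_h$, which is always at least $1$ and equals $1$ precisely when $P_i^{*}P_h$ is a single principal set.

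Combining the lemma with Theorem~\ref{thm:tanaka}(3), $T(G,\S)$ is AC if and only if $P_i^{*}P_h$ is a single principal set for all indices $i\neq h$. Now change variables: $i\mapsto i^{*}$ is a permutation of $\{0,1,\dots,d\}$ (where $i^{*}$ denotes the index with $P_{i^{*}}=P_i^{*}$), and $i^{*}\neq h$ is equivalent to $P_i\neq P_h^{*}$, so replacing $i$ by $i^{*}$ and relabelling $h$ as $j$ turns the condition into: $P_iP_j$ is a single principal set whenever $P_i\neq P_j^{*}$. Finally I check this matches the asserted property of $\S$. If $x\in P_i$, $y\in P_j$, $xy\in P_h$ and $P_i\neq P_j^{*}$, then $\lambda_{ijh}\neq 0$, so $P_h\subseteq P_iP_j$, and $P_iP_j$ being a single principal set forces $P_iP_j=P_h$. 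Conversely, if $P_iP_j=P_h$ holds whenever $x\in P_i$, $y\in P_j$, $xy\in P_h$ and $P_i\neq P_j^{*}$, then for any indices $i,j$ with $P_i\neq P_j^{*}$ we may pick $x\in P_i$ and $y\in P_j$ (both nonempty) and let $h$ be defined by $xy\in P_h$, obtaining $P_iP_j=P_h$, a single principal set.

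The lemma and the ``union of principal sets'' observation are routine Schur-ring bookkeeping; the one place to be careful is the change of variables $i\leftrightarrow i^{*}$, which is exactly what converts Tanaka's symmetric hypothesis ``$h\neq i$'' into the asymmetric hypothesis ``$P_i\neq P_j^{*}$'' in the statement, together with confirming that ``exactly one $j$'' in condition~(3) matches ``$P_iP_j$ a single principal set'' after this substitution. I do not anticipate any obstacle beyond getting this indexing right.
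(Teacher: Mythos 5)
Your proposal is correct and follows essentially the same route as the paper: both reduce the question to Tanaka's intersection-number condition (3) and exploit the same Schur-ring manipulations (the trick $b=a^{-1}(ab)$ together with the fact that a product of principal sets is a union of principal sets). You merely repackage the paper's two directional element chases into a single counting lemma identifying $\{j:p_{ij}^h\neq 0\}$ with the principal sets inside $P_i^*P_h$, followed by the relabeling $i\mapsto i^*$, which is a clean and valid way to get the same equivalence.
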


\begin{proof}
    First suppose that $T(G,\S)$ is AC. Let $x,y\in G$ with $x\in P_i$, $y\in P_j$ and $P_i\neq P_j^*$. Since $P_j\neq P_{i}^*$, and $T(G,\S)$ is AC, there is a unique $h$ with $p_{i'h}^j\neq 0$, so $P_j\subseteq P_{i'}P_h$. Then for $z\in P_j$, there are $a\in P_{i'}$ and $b\in P_h$ such that $z=ab$. Then $b=a^{-1}z$, so $b\in P_iP_j$. Thus, $P_h\subseteq P_iP_j$. Now suppose that $P_m\subseteq P_iP_j$. Then for $a\in P_m$, there is $b\in P_i$ and $c\in P_j$ such that $a=bc$. Then $c=b^{-1}a$, so $c\in P_{i'}P_m$. Thus $P_j\subseteq P_{i'}P_m$. This implies $p_{i'm}^j\neq 0$. By the uniqueness of $p_{i'h}^j$, we have $P_m=P_h$. So $P_h$ is the only principal set in $P_iP_j$. We know $\overline{P_i}\cdot\overline{P_j}=\sum_{\ell} p_{ij}^\ell \overline{P_\ell}$. For all $p_{ij}^\ell\neq 0$, we have $P_\ell\subseteq P_iP_j$. Since $P_h$ is the unique principal set in $P_iP_j$ we must have $P_iP_j=P_h$. Note $x\in P_i$ and $y\in P_j$, so $xy\in P_h$ as $P_iP_j=P_h$. Therefore, $P_h$ is the principal set containing $xy$ as claimed. 

   Conversely, suppose that for all $x,y\in G$, if $x\in P_i$, $y\in P_j$, $xy\in P_h$, and $P_i\neq P_j^*$ then $P_iP_j=P_h$. Let $P_i$ and $P_h$ be distinct principal sets with $x\in P_i$ and $z\in P_h$. Let $P_j$ be the principal set containing $x^{-1}z$. Notice $z\in P_iP_j$, so $P_h\subseteq P_iP_j$ which implies $p_{ij}^h\neq 0$. So we have found a $j$ such that $p_{ij}^h\neq 0$. Now suppose that $P_w$ is any principal set such that $p_{iw}^h\neq 0$. This implies that $P_h\subseteq P_iP_w$. Then $z=ab$ for some $a\in P_i$ and $b\in P_w$, so $e=z^{-1}ab$. Notice that $z^{-1}\in P_h^*$, $a\in P_i$, and $b\in P_w$, so $e\in P_h^*P_iP_w$. As $P_i\neq P_h$, $z^{-1}\in P_h^*$, $x\in P_i$, and $z^{-1}x\in P_j^*$ we have $P_h^*P_i=P_j^*$.  Hence, $e\in P_h^*P_iP_w= P_j^*P_w$. Then there exists a $g\in P_j^*$ such that $g^{-1}\in P_w$. However, $g^{-1}\in P_j$ and so $P_w=P_j$. So for distinct $P_i,P_h$ there is a unique $j$ such that $p_{ij}^h\neq 0$. By Theorem \ref{thm:tanaka}, $T(G,\S)$ is AC.
\end{proof}

\section{Camina Groups}
A group $G$ is a \emph{Camina group} (\cite{Camina,Lewis1}) if every conjugacy class of $G$ outside of $G'$ is a coset of $G'$. We will need:

\begin{Thm}[Dark and Scoppola, \cite{Dark1}] Let $G$ be a Camina group. Then one of the following is true:
        \begin{enumerate}[leftmargin=*]
            \item $G$ is a Frobenius group whose Frobenius complement is cyclic.
            \item $G$ is a Frobenius group whose Frobenius complement is $Q_8$.
            \item $G$ is a $p-$group for some prime $p$ of nilpotency class $2$ or $3$.
        \end{enumerate}
\end{Thm}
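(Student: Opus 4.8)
The plan is to prove the theorem in three stages: first reduce to the statement that a nonabelian Camina group is either a Frobenius group or a $p$-group; then, in the Frobenius case, identify the complement; then, in the $p$-group case, bound the nilpotency class by $3$. (Throughout $G$ is tacitly nonabelian, i.e.\ $G'\neq 1$, since an abelian group satisfies none of the three conclusions.) The basic observation is that the hypothesis is precisely that $(G,G')$ is a \emph{Camina pair}: for $g\in G\setminus G'$ we have $g^{G}=gG'$, so the conjugacy class sum of $g$ equals $\widehat{gG'}$, and comparing cardinalities gives $|C_{G}(g)|=[G:G']$ for every $g\notin G'$.

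For the reduction I would use character theory. Fix $\chi\in\mathrm{Irr}(G)$ afforded by a representation $\rho$, and let $g\notin G'$. Since $\widehat{gG'}=\widehat{g^{G}}$ is a class sum, $\rho(\widehat{gG'})=\omega_{\chi}(g)\,I$ is scalar with $\omega_{\chi}(g)=|G'|\chi(g)/\chi(1)$; on the other hand $\rho(\widehat{gG'})=\rho(g)\,\rho(\widehat{G'})$ and $\rho(\widehat{G'})=|G'|\,\pi$, where $\pi$ is the projection onto the subspace of $G'$-invariants of $\rho$. Hence if $\chi(g)\neq 0$ then $\rho(g)\,|G'|\,\pi=\omega_{\chi}(g)I$ is invertible, which forces $\pi=I$, i.e.\ $G'\le\ker\chi$; since $G/G'$ is abelian this means $\chi$ is linear. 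Thus every irreducible character of $G$ is linear or vanishes on $G\setminus G'$. Combining this with the degree equation $[G:G'](|G'|-1)=\sum\{\chi(1)^{2}:\chi\text{ nonlinear}\}$, with the Clifford theory of the nonlinear (vanishing) characters, and with $|C_{G}(g)|=[G:G']$ for $g\notin G'$, one deduces after some bookkeeping on the primes dividing $|G'|$ and $[G:G']$ that $G$ is a Frobenius group or $|G|$ is a prime power; I would cite \cite{Camina,MacDonald,Chillag1} for this step rather than reproduce the bookkeeping.

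Now suppose $G=K\rtimes H$ is Frobenius, so $K$ is nilpotent (Thompson), $\gcd(|K|,|H|)=1$, and every $1\neq h\in H$ acts fixed-point-freely on $K$; in particular $k\mapsto k^{-1}k^{h}$ and $k\mapsto k(k^{h})^{-1}$ are bijections of $K$ for each such $h$. A short commutator computation then gives $G'=KH'$ and, for $h\in H\setminus H'$, $h^{G}=K\cdot h^{H}\subseteq KH'h=G'h$, with equality if and only if $h^{H}=H'h$. Hence $G$ is a Camina group if and only if $h^{H}=H'h$ for all $h\in H\setminus H'$; that is, if and only if $H$ is abelian or itself a Camina group. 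Since $H$ is a Frobenius complement, its Sylow subgroups are cyclic or generalized quaternion, and a nonabelian Frobenius group is never a Frobenius complement (it contains a non-cyclic Sylow subgroup for some odd prime, or a non-cyclic subgroup of order $pq$). Applying the reduction above to $H$ (induction on $|G|$) therefore forces $H$ to be cyclic or generalized quaternion; and among generalized quaternion groups only $Q_{8}$ is Camina, because in $Q_{2^{n}}$ with $n\ge4$ a noncentral element $a$ of maximal order has class $\{a,a^{-1}\}$ of size $2$ whereas its $G'$-coset has size $2^{n-2}>2$. So the complement is cyclic or $Q_{8}$, which gives conclusions (1) and (2).

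The remaining case --- that a Camina $p$-group has nilpotency class at most $3$ --- is the deep part, and is the main content of \cite{Dark1} (building on \cite{MacDonald}). The facts one has to work with are: for $x\notin P'$ the map $g\mapsto[x,g]$ is onto $P'$ (equivalently $|C_{P}(x)|=[P:P']$, so $|P'|^{2}\le|P|$); $\Phi(P)=P'$; and every quotient $P/\gamma_{i}(P)$ with $i\ge3$ is again a Camina $p$-group, so it suffices to exclude class $4$. Assuming $\gamma_{5}(P)=1\neq\gamma_{4}(P)$, one fixes $x\notin P'$, iterates $[x,P]=P'$ to obtain $[x,P,\dots,P]=\gamma_{i+1}(P)$, passes to the resulting surjections $P/P'\twoheadrightarrow\gamma_{2}/\gamma_{3}\twoheadrightarrow\gamma_{3}/\gamma_{4}\twoheadrightarrow\cdots$ induced by $\bar g\mapsto\overline{[x,g]}$, and then combines these with the Hall--Witt identity, the three-subgroup lemma, and the order bound $|P'|^{2}\le|P|$ to force $\gamma_{4}(P)=1$, a contradiction. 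I expect this last step to absorb essentially all of the difficulty: the commutator calculus is intricate and the inductive organization is where the real work lies, so in a write-up I would quote \cite{Dark1} (and \cite{MacDonald}) rather than reconstruct it.
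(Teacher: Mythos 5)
The paper offers no proof of this statement at all: it is quoted as a known theorem of Dark and Scoppola \cite{Dark1}, with the auxiliary facts about Camina groups likewise imported from \cite{MacDonald,Lewis1,Few_Conj}. So there is no internal argument to compare yours against, and your decision to route the two genuinely hard steps through the literature is the same decision the paper makes. Where your outline is explicit it is correct: the centralizer computation $|C_G(g)|=[G:G']$ for $g\notin G'$; the argument that every irreducible character is linear or vanishes off $G'$ (the projection $\rho(\widehat{G'})=|G'|\pi$ argument is fine); the identities $G'=KH'$ and $h^G=K\,h^H$ in a Frobenius group $G=K\rtimes H$, giving that $G$ is Camina iff $H$ is abelian or Camina; the standard structure of Frobenius complements; and the check that among generalized quaternion groups only $Q_8$ is Camina.

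Two caveats. First, the steps that carry essentially all the content --- the dichotomy that a nonabelian Camina group is Frobenius or a $p$-group, and the class-at-most-$3$ bound for Camina $p$-groups --- are deferred to \cite{Camina,MacDonald,Chillag1} and \cite{Dark1} rather than proved, so as a self-contained proof the proposal is incomplete at exactly those two points; your closing sketch of the class bound (quotient to class $4$, iterate $[x,P]=P'$, three-subgroup lemma) should be flagged as a guess at a strategy, not as a summary of Dark--Scoppola's actual and considerably more delicate argument. Second, a small imprecision: your parenthetical reason that a Frobenius group can never be a Frobenius complement should also cover a non-cyclic Sylow $2$-subgroup of the kernel (e.g.\ $A_4$); the kernel's Sylow $2$-subgroup cannot be generalized quaternion since every automorphism fixes its unique involution, so either some Sylow subgroup of the kernel contains $\Z_p\times\Z_p$ or the kernel is cyclic and one finds a non-cyclic subgroup of order $pq$ --- the conclusion you use is standard and true, but as stated the justification is slightly off.
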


\begin{Prop}[Macdonald, \cite{MacDonald}]\label{thm:Camina3}
        Let $G$ be a Camina $p-$group of nilpotency class $3$. Then the lower central series of $G$ is $\{e\}\leq Z(G)\leq G'\leq G$. Furthermore, $|G\colon G'|=p^{2n}$ and $|G'\colon Z(G)|=p^n$ for some even integer $n$. Additionally, $G/G'$ and $G'/Z(G)$ are elementary abelian groups.
\end{Prop}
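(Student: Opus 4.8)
The plan is to combine the commutator calculus of $G$ with the character theory of the Camina pair $(G,G')$, using throughout the elementary observation that if $N\normal G$ with $N\le G'$ then $G/N$ is again a Camina group, with derived subgroup $G'/N$: a conjugacy class of $G/N$ lying outside $G'/N$ is the image of one of $G$ lying outside $G'$, and the image of a coset of $G'$ is a coset of $G'/N$. Since $G$ has class $3$, $\gamma_3(G)\le Z(G)$ and $[G',G']\le\gamma_4(G)=1$, so $G'$ is abelian; and a central element outside $G'$ would be a singleton conjugacy class equal to a coset of $G'$, forcing $G'=1$, so $\gamma_3(G)\le Z(G)\le G'$. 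Applying the quotient observation, $G/\gamma_3(G)$ and $G/Z(G)$ are Camina $p$-groups of class $2$. For any class-$2$ Camina $p$-group $H$ the singleton-class argument gives $Z(H)=H'$, and $H/H'$ is elementary abelian (if $x^p\notin H'$ then the Camina condition applied to $x$ and to $x^p$ makes $h\mapsto(xh)^p(x^p)^{-1}$ a bijection $H'\to H'$, while the class-$2$ collection formula identifies it with $h\mapsto h^p$, never surjective on a nontrivial finite abelian $p$-group), whence $H'=Z(H)$ is elementary abelian since $[x,y]^p=[x^p,y]=1$. Applied to $H=G/\gamma_3(G)$ this shows $G/G'$ and $G'/\gamma_3(G)$ are elementary abelian, which — once $Z(G)=\gamma_3(G)$ is known — is precisely the elementary-abelian and lower-central-series content of the statement. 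It remains to prove: (a) $Z(G)=\gamma_3(G)$; (b) $|G:G'|=p^{2n}$ and $|G':Z(G)|=p^{n}$ for a common $n$; (c) $n$ is even.

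For the soft half of (b) and for (c)'s evenness of $\dim(G/G')$, reformulate Camina via commutators: for $x\notin G'$, $x^G=xG'$ says exactly $\{[x,g]:g\in G\}=G'$. Passing to the elementary abelian quotients (and using $[G',G']=1$) this makes the $\mathbb{F}_p$-bilinear alternating map $b\colon G/G'\times G/G'\to G'/\gamma_3(G)$, $(\bar x,\bar y)\mapsto[x,y]\gamma_3(G)$, nondegenerate, with $b(\bar x,-)$ surjective for every $\bar x\ne 0$; the companion map $b'\colon G/G'\times G/G'\to G'/Z(G)$ of the class-$2$ quotient $G/Z(G)$ has the same property. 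Composing $b$ with a nonzero functional $\xi$ on $G'/\gamma_3(G)$ gives an alternating form $b_\xi$ on $G/G'$ whose radical is trivial (since $b(\bar x,-)$ is onto); the existence of a nondegenerate alternating form forces $\dim(G/G')$ even, say $=2n$, and then the Pfaffian — a homogeneous polynomial of degree $n$ — is nonvanishing on the nonzero vectors of the subspace $\{b_\xi:\xi\in(G'/\gamma_3(G))^*\}$ of alternating forms, so by Chevalley–Warning $\dim(G'/\gamma_3(G))\le n$; the same argument on $b'$ gives $\dim(G'/Z(G))\le n$.

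The sharp relation and (a) I would reach by character theory. As $G'$ is abelian, every nonlinear $\chi\in\operatorname{Irr}(G)$ vanishes off $G'$ by the Camina property; if $\chi$ lies over a $G$-invariant linear character $\lambda$ of $G'$ — equivalently $\gamma_3(G)\le\ker\lambda$ — then $\chi|_{G'}=\chi(1)\lambda$, so $1=\langle\chi,\chi\rangle$ yields $\chi(1)^2=|G:G'|$. Hence $|G:G'|=p^{2n}$ is a perfect square, and since $\sum_{\chi\text{ over }\lambda}\chi(1)^2=|G:G'|$ there is exactly one such $\chi$ over each of the $|G'/\gamma_3(G)|-1$ nontrivial $G$-invariant $\lambda$. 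Treating the remaining characters — those over non-$G$-invariant $\lambda$, which are induced from inertia groups and have degrees $p^{n+a}$ with $p^{2a}=[G:I_G(\lambda)]$ — and balancing $\sum_\chi\chi(1)^2=|G|$ against the count of conjugacy classes (those outside $G'$ are the $|G/G'|-1$ nontrivial cosets of $G'$; those inside $G'$ are contained in cosets of $\gamma_3(G)$ since $[G',G]=\gamma_3(G)$, with the $|\gamma_3(G)|$ classes inside $\gamma_3(G)$ itself being singletons), I would extract the reverse bound $|G':\gamma_3(G)|\ge p^{n}$. Combined with the previous paragraph this forces $|G':Z(G)|=|G':\gamma_3(G)|=p^{n}$, hence $Z(G)=\gamma_3(G)$, finishing (a) and (b).

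For (c), on the associated graded Lie ring $G/G'\oplus G'/Z(G)\oplus Z(G)$ the graded Jacobi (Hall–Witt) identity couples $b$ to the pairing $(\bar x,\bar w)\mapsto[x,w]\in Z(G)$; using the surjectivity of $b$ and of $b'$ one manufactures from this a nondegenerate alternating $\mathbb{F}_p$-form on $G'/Z(G)$, so $n=\dim(G'/Z(G))$ is even. I expect the main obstacle to be the matching lower bound $|G':Z(G)|\ge p^{n}$ in the previous step: the upper bounds are soft (Chevalley–Warning on a Pfaffian), but the lower bound is exactly the place where one must use that $G$ has class precisely $3$ — i.e.\ that the second-stage commutator pairing into $Z(G)=\gamma_3(G)$ is nondegenerate on the $G'/Z(G)$ side — rather than merely that $G/Z(G)$ is a class-$2$ Camina group, and this is the crux whether one pushes it through the character/class bookkeeping above or through a direct Hall–Witt computation.
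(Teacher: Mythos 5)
The paper does not prove this proposition at all: it is quoted as a known theorem of Macdonald \cite{MacDonald} (with the quotient-Camina fact you use appearing separately as Proposition \ref{prop:cammod}), so there is no internal argument to compare against; your attempt has to stand on its own as a reproof of Macdonald's result.

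Judged that way, the soft half of your argument is fine: $\gamma_3(G)\le Z(G)\le G'$, $G'$ abelian, the passage to the class-$2$ Camina quotients, the elementary-abelian statements, the nondegenerate alternating commutator form giving $|G\colon G'|=p^{2n}$, and the Pfaffian/Chevalley--Warning bound $|G'\colon\gamma_3(G)|\le p^n$ are all correct and correctly assembled. But the genuinely hard content of the proposition is exactly what remains unproved, and you say so yourself: the matching lower bound, the identification $Z(G)=\gamma_3(G)$, and the evenness of $n$ are left as intentions (``I would extract'', ``one manufactures''), not arguments. Moreover, even granting the character-counting bound you hope for, your deduction is logically off: from $|G'\colon\gamma_3(G)|\ge p^n$ together with the upper bounds you only get $|G'\colon\gamma_3(G)|=p^n$, and since $\gamma_3(G)\le Z(G)$ this yields $|G'\colon Z(G)|\le p^n$ with no lower bound at all; nothing in what you have written rules out $Z(G)$ strictly larger than $\gamma_3(G)$, so ``hence $Z(G)=\gamma_3(G)$'' does not follow -- you would need the class/character bookkeeping to control $|Z(G)|$ (the number of singleton classes) directly, which you have not done. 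Likewise the claim that the Hall--Witt identity ``manufactures'' a nondegenerate alternating form on $G'/Z(G)$ is a one-sentence gesture: since $G'$ is abelian the obvious form vanishes, and it is not clear what the intended form is, so the evenness of $n$ is unsupported. As it stands this is a partial proof with the crux missing; to complete it you would either have to carry out the character-theoretic count in full (including pinning down $|Z(G)|$) or simply cite Macdonald as the paper does.
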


\begin{Prop}[Macdonald, \cite{MacDonald}]\label{prop:cam2center}
        Let $G$ be a Camina p-group. Then $Z(G)$ is elementary abelian.
    \end{Prop}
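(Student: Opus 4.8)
The plan is to combine the Camina hypothesis with the Dark--Scoppola dichotomy, which tells us that $G$ is a $p$-group of nilpotency class $2$ or $3$. In the class-$3$ case Proposition~\ref{thm:Camina3} already supplies a great deal of structure: the lower central series is $\{e\}\le Z(G)\le G'\le G$, with $G/G'$ and $G'/Z(G)$ elementary abelian. In the class-$2$ case I will only need, and will prove by hand, the statement that $G/G'$ is elementary abelian. Throughout I use that $G$ is nonabelian, so $1<G'<G$, together with standard commutator calculus.

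I would begin with the easy observation that $Z(G)\le G'$: if $g\notin G'$ the Camina condition gives $g^G=gG'$, so $|g^G|=|G'|>1$; hence no central element, whose conjugacy class is a singleton, can lie outside $G'$. Consequently, in the class-$2$ case $G'\le Z(G)\le G'$ forces $Z(G)=G'=\gamma_2(G)$, while in the class-$3$ case $Z(G)=\gamma_3(G)=[G',G]$ by Proposition~\ref{thm:Camina3}. In either case $Z(G)$ is generated by commutators $[c,g]$ with $g\in G$ and with $c$ ranging over $G$ (class $2$) or over $G'$ (class $3$); moreover each such $[c,g]$ lies in $Z(G)$ and is therefore central.

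Next I would show $G^p\le G'$. This is immediate from Proposition~\ref{thm:Camina3} when the class is $3$. When the class is $2$, suppose for contradiction that $g^p\notin G'$ for some $g$ (so $g\notin G'$). Since every element of $G'$ is central, $(g^x)^p=(gc)^p=g^pc^p$ where $c=g^{-1}g^x$; as $x$ runs over $G$, $g^x$ runs over $g^G=gG'$, so $c$ runs over $G'$, and hence $(g^p)^G=g^p\{c^p\mid c\in G'\}$. Thus $(g^p)^G$ is a coset of $\{c^p\mid c\in G'\}$, a proper subgroup of the nontrivial finite $p$-group $G'$; but the Camina condition applied to $g^p\notin G'$ forces $(g^p)^G=g^pG'$, a contradiction. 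Hence $G^p\le G'$.

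Finally I would invoke the identity $[c,g^p]=[c,g]^p$, valid whenever $g$ commutes with $[c,g]$ — which holds here since $[c,g]$ is central. For a generator $[c,g]$ of $Z(G)$ as above, $g^p\in G'$ by the previous step, so $[c,g]^p=[c,g^p]$; in the class-$2$ case $g^p\in G'=Z(G)$ is central, so $[c,g^p]=e$, while in the class-$3$ case $c\in G'$ as well, so $[c,g^p]\in[G',G']\le\gamma_4(G)=\{e\}$. Either way every generator of $Z(G)$ has order dividing $p$, and since $Z(G)$ is abelian it is elementary abelian. I expect the only genuine friction to be the class-$2$ verification that $G^p\le G'$ (Proposition~\ref{thm:Camina3} covers only class $3$) together with the nilpotency bookkeeping needed to see that $[G',G']\le\gamma_4(G)$ is trivial; everything else is a short manipulation of the Camina property and of commutators.
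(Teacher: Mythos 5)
Your argument is correct, but there is no proof in the paper to compare it against: Proposition \ref{prop:cam2center} is quoted from MacDonald \cite{MacDonald} without proof, so what you have written is an independent verification rather than a variant of the paper's argument. The skeleton is sound: the Camina condition gives $Z(G)\le G'$ (a class outside $G'$ has size $|G'|>1$); in class $2$ this forces $Z(G)=G'$, and in class $3$ Proposition \ref{thm:Camina3} gives $Z(G)=\gamma_3(G)=[G',G]$, so in both cases $Z(G)$ is generated by central commutators $[c,g]$. Your coset computation in class $2$ is the nice part: if $g^p\notin G'$ then $(g^p)^G=\{(g^x)^p\colon x\in G\}=g^p\{c^p\colon c\in G'\}$, and since $G'\le Z(G)$ is a nontrivial abelian finite $p$-group the set $\{c^p\colon c\in G'\}$ is a proper subgroup of $G'$, contradicting $(g^p)^G=g^pG'$; hence $G^p\le G'$, while in class $3$ this is immediate from $G/G'$ elementary abelian. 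Then $[c,g]^p=[c,g^p]$ (legitimate since $[c,g]$ is central), which vanishes because $g^p\in G'=Z(G)$ in class $2$ and because $[G',G']\le\gamma_4(G)=\{e\}$ in class $3$; an abelian group generated by elements of order dividing $p$ is elementary abelian. Two elisions should be made explicit. First, Dark--Scoppola as quoted also allows Frobenius groups, so you need the easy observation that a finite $p$-group cannot be Frobenius (a Frobenius group has trivial centre, a nontrivial $p$-group does not) before asserting the class is $2$ or $3$. Second, your class-$3$ case is only as self-contained as Proposition \ref{thm:Camina3}, itself an unproved quotation from the same paper of MacDonald; that is consistent with how the present paper uses these facts, but worth stating. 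Finally, note that the paper's literal definition of Camina group admits abelian groups, for which the proposition is false (e.g. $\Z_{p^2}$); your standing assumption that $G$ is nonabelian, i.e. the usual convention $1<G'<G$, is exactly what is needed and should be flagged as part of the hypothesis.
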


\begin{Prop}[Lemma 2.3, \cite{Lewis1}]\label{prop:cammod}
    Let $G$ be a Camina group.
    \begin{enumerate}[leftmargin=*]
        \item If $N$ is a normal subgroup of $G$, then either $N\leq G'$ or $G'\leq N$.
        \item If $N<G'$ is a normal subgroup of $G$, then $G/N$ is also a Camina group.
    \end{enumerate}
\end{Prop}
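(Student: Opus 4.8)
The plan is to derive both statements directly from the definition of a Camina group, using only two standard facts: that the image of a conjugacy class under a surjective homomorphism is again a conjugacy class, and that the derived subgroup of a quotient $G/N$ equals $G'N/N$. For part (1), I would argue for a dichotomy: suppose $N \not\le G'$ and fix $x \in N \setminus G'$. Since $G$ is Camina and $x \notin G'$, the conjugacy class $x^G$ is a coset of $G'$; as $x \in x^G$, this forces $x^G = xG'$. Because $N \normal G$ we have $x^G \subseteq N$, hence $xG' \subseteq N$, and multiplying by $x^{-1} \in N$ gives $G' \subseteq N$. Thus either $N \le G'$ or $G' \le N$.

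For part (2), let $N < G'$ be normal in $G$ and write $\pi \colon G \to G/N$ for the quotient map. Since $N \le G'$ we have $\pi(G') = G'N/N = G'/N = (G/N)'$. Now take any coset $xN$ with $xN \notin (G/N)'$; then $x \notin G'$, so by the Camina property $x^G = xG'$. Applying $\pi$, the conjugacy class of $xN$ in $G/N$ equals $\pi(x^G) = \pi(xG') = (xN)\,\pi(G') = (xN)(G/N)'$, a coset of $(G/N)'$. Hence every conjugacy class of $G/N$ lying outside $(G/N)'$ is a coset of $(G/N)'$, i.e.\ $G/N$ is a Camina group. The strict inequality $N < G'$ (as opposed to merely $N \le G'$) is used only to guarantee that $(G/N)' = G'/N$ is nontrivial, so that the conclusion is not vacuous.

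I do not expect a genuine obstacle here: the proposition is elementary and each part is a few lines. The only point requiring care is the routine quotient bookkeeping in part (2) — verifying $(G/N)' = G'/N$, that $\pi$ carries conjugacy classes onto conjugacy classes and cosets of $G'$ onto cosets of $G'/N$, and that $x \notin G'$ is equivalent to $xN \notin (G/N)'$, which is precisely the step where the hypothesis $N \le G'$ is invoked.
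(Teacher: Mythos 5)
Your argument is correct: part (1) follows exactly as you say from $x^G=xG'\subseteq N$ for any $x\in N\setminus G'$, and part (2) from the facts that $\pi$ maps conjugacy classes onto conjugacy classes and that $(G/N)'=G'N/N=G'/N$ when $N\le G'$. The paper itself gives no proof of this proposition --- it is quoted as Lemma 2.3 of Lewis's survey --- and your direct verification from the definition is the standard argument, with the only stylistic caveat being that some authors build the nontriviality conditions $1<G'<G$ into the definition of a Camina group, which is precisely where the strict inequality $N<G'$ is needed, as you observe.
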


\begin{Prop}[Macdonald, Theorem 5.2(i), \cite{MacDonald}]\label{lem:pcamclass}
    Let $G$ be a Camina $p-$group. Let $\gamma_2(G)=[G,G]$ and $\gamma_3(G)=[\gamma_2(G),G]=Z(G)$ be the second and third terms in the lower central series of $G$. Then
        \[x^G=x\gamma_2(G)\text{ if }x\in G\setminus \gamma_2(G),\ x^G=x\gamma_3(G)\text{ if }x\in \gamma_2(G)\setminus \gamma_3(G),\ x^G=\{x\} \text{ if }x\in \gamma_3(G).\]
\end{Prop}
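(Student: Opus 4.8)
The plan is to split on where $x$ sits in the lower central series $\{e\}\le\gamma_3(G)=Z(G)\le\gamma_2(G)=G'\le G$. Two of the three cases are immediate. If $x\in\gamma_3(G)=Z(G)$, then $x$ is central, so $x^G=\{x\}$. If $x\in G\setminus\gamma_2(G)=G\setminus G'$, then by the defining property of a Camina group the class $x^G$ is a coset of $G'$; since it contains $x$, it must be $xG'=x\gamma_2(G)$. So the entire content of the statement is the middle case $x\in\gamma_2(G)\setminus\gamma_3(G)$, for which I must prove $x^G=x\gamma_3(G)$.

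For that case, I would first record the easy inclusion: for any $g\in G$ we have $x^g=x[x,g]$ with $[x,g]\in[\gamma_2(G),G]=\gamma_3(G)$, so $x^G\subseteq x\gamma_3(G)$. Moreover, because $x\in\gamma_2(G)$ every value $[x,g]$ lies in $\gamma_3(G)=Z(G)$ and is therefore central, so the identity $[x,gh]=[x,h]\,[x,g]^h$ collapses to $[x,gh]=[x,h][x,g]$; thus $g\mapsto[x,g]$ is a homomorphism $G\to Z(G)$, its image $[x,G]$ is a subgroup of $Z(G)=\gamma_3(G)$, and $x^G=x[x,G]$. It then remains only to show $[x,G]=\gamma_3(G)$.

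To get this, I argue by contradiction. Suppose $N:=[x,G]\subsetneq\gamma_3(G)$. Since $G$ has nilpotency class exactly $3$ we have $\gamma_3(G)\subsetneq\gamma_2(G)=G'$, so $N<G'$, and $N$ is normal (it is even central). By Proposition~\ref{prop:cammod}(2), $G/N$ is again a Camina group; it is a $p$-group, and since $N\subsetneq\gamma_3(G)$ we have $\gamma_3(G/N)=\gamma_3(G)/N\neq\{e\}$ while $\gamma_4(G/N)=\gamma_4(G)N/N=\{e\}$, so $G/N$ has nilpotency class exactly $3$. Applying the structural result Proposition~\ref{thm:Camina3} to $G/N$ yields $Z(G/N)=\gamma_3(G/N)=\gamma_3(G)/N$. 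But $N=[x,G]$ says $[xN,gN]=N$ for all $g$, i.e.\ $xN\in Z(G/N)$; together with the previous equality this forces $x\in\gamma_3(G)$, contradicting $x\in\gamma_2(G)\setminus\gamma_3(G)$. Hence $[x,G]=\gamma_3(G)$ and $x^G=x\gamma_3(G)$.

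The only delicate point is the quotient step: one must verify that passing to $G/N$ keeps us inside the class of groups to which Proposition~\ref{thm:Camina3} applies — a Camina $p$-group of nilpotency class $3$ — which is precisely why the strict inclusion $N\subsetneq\gamma_3(G)$ (not just $N\le\gamma_3(G)$) is needed. Everything else is routine manipulation of basic commutator identities. (As written the statement presupposes class $3$, since $\gamma_3(G)=Z(G)$ is impossible for a class-$2$ $p$-group; in the class-$2$ situation the same scheme applies and is in fact simpler, with the middle and last cases merging.)
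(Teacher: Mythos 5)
Your proof is correct. Be aware, though, that the paper itself gives no proof of this statement — it is imported verbatim as Macdonald's Theorem 5.2(i) — so your argument is not paralleling an in-paper proof but supplying one from the paper's other quoted ingredients. The two outer cases are indeed immediate, and your treatment of $x\in\gamma_2(G)\setminus\gamma_3(G)$ is sound: since $[x,G]\subseteq[\gamma_2(G),G]=\gamma_3(G)=Z(G)$, the identity $[x,gh]=[x,h][x,g]^h$ collapses, $g\mapsto[x,g]$ is a homomorphism, and $x^G=x[x,G]$ with $[x,G]$ a central (hence normal) subgroup of $\gamma_3(G)$; if $[x,G]$ were proper in $\gamma_3(G)$, then $[x,G]<\gamma_3(G)<G'$ (class exactly $3$, which as you note is forced by the hypothesis $\gamma_3(G)=Z(G)\neq\{e\}$), Proposition \ref{prop:cammod}(2) makes $G/[x,G]$ a Camina $p$-group, the equality $\gamma_i(G/N)=\gamma_i(G)N/N$ shows this quotient still has class exactly $3$, and Proposition \ref{thm:Camina3} identifies $Z(G/[x,G])$ with $\gamma_3(G)/[x,G]$, contradicting the centrality of the image of $x$; so $[x,G]=\gamma_3(G)$ and $x^G=x\gamma_3(G)$. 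The only caveat is one of provenance rather than logic: your key input, Proposition \ref{thm:Camina3}, comes from the same MacDonald paper as the statement being proved, so what you have is an alternative derivation within the toolkit this paper quotes (Lewis's quotient lemma plus MacDonald's structure of the lower central series) rather than a fully independent proof — which is perfectly adequate for the role the statement plays here.
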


\begin{Thm}[Cangelmi and Muktibodh, \cite{Few_Conj}]\label{thm:twoclass}
     Let $G$ be a group. Then, $G$ is a Camina group and $G'$ is the union of two conjugacy classes if and only if either $G$ is a Frobenius group with Frobenius kernel $\Z_p^r$ and Frobenius complement $\Z_{p^r-1}$, or $G$ is an extra-special $2-$group.
\end{Thm}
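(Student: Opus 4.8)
The plan is to prove Theorem \ref{thm:twoclass} in both directions, treating the ``if'' direction as a straightforward computation and the ``only if'' direction as the substantive part. For the ``if'' direction, suppose first that $G$ is a Frobenius group with kernel $\Z_p^r$ and complement $\Z_{p^r-1}$. I would recall that in such a Frobenius group the kernel $K=\Z_p^r$ is an abelian normal subgroup equal to $G'$, and the complement acts on $K\setminus\{e\}$ in a single orbit (this is the defining feature of this particular Frobenius group, since $\Z_{p^r-1}$ can be realized as the multiplicative group of $\mathbb{F}_{p^r}$ acting on the additive group). Hence $G'=K$ is the union of exactly two conjugacy classes, namely $\{e\}$ and $K\setminus\{e\}$; moreover every element outside $G'$ lies in a coset of $G'$ that is a single conjugacy class (standard for Frobenius groups with abelian kernel), so $G$ is a Camina group. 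For the extra-special $2$-group case, $G'=Z(G)=\Z_2$ has order $2$, so it is trivially the union of the two conjugacy classes $\{e\}$ and $\{z\}$; and the Camina property for extra-special $p$-groups is classical (each non-central coset of $G'$ is a conjugacy class). This direction requires no new ideas.

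For the ``only if'' direction, assume $G$ is a Camina group and $G'$ is the union of two conjugacy classes. One of these classes is $\{e\}$, so the other is $G'\setminus\{e\}$, a single conjugacy class of size $|G'|-1$. I would first invoke the Dark--Scoppola theorem to split into the three cases: (1) $G$ Frobenius with cyclic complement, (2) $G$ Frobenius with complement $Q_8$, (3) $G$ a Camina $p$-group of class $2$ or $3$. In case (1), the kernel is $G'$ and is nilpotent; since $G'\setminus\{e\}$ is a single conjugacy class of $G$, the complement acts transitively on the nonidentity elements of the kernel, which forces the kernel to be an elementary abelian $p$-group $\Z_p^r$ (a group admitting a transitive automorphism-type action on its nonidentity elements must be a homocyclic abelian group, and transitivity on all nonidentity elements of a homocyclic group forces exponent $p$; also all such elements then have the same order, etc.), and the complement has order dividing $|\Z_p^r|-1=p^r-1$; being cyclic and acting transitively (hence regularly, as Frobenius complements act semiregularly) on the $p^r-1$ nonidentity elements, it must have order exactly $p^r-1$, giving $\Z_p^r\rtimes\Z_{p^r-1}$. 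In case (2), $G'$ is again the Frobenius kernel, elementary abelian by the same argument, say $\Z_p^r$, and $Q_8$ acts semiregularly and transitively on its $p^r-1$ nonzero vectors, forcing $|Q_8|=8$ to divide $p^r-1$ and $Q_8$ to act regularly; but $Q_8$ acts regularly on $p^r-1$ points only if $p^r-1=8$, i.e.\ $p^r=9$, and one checks $Q_8$ does not act fixed-point-freely and transitively on $\Z_3^2\setminus\{0\}$ in the required manner (or, more simply, in Theorem \ref{thm:main}(v) the group $\Z_3^2\rtimes Q_8$ is not in this list and indeed its $G'$ is not the union of two classes) — so case (2) yields nothing. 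In case (3), I use Proposition \ref{lem:pcamclass}: if $G$ has class $3$, then $\gamma_2(G)\setminus\gamma_3(G)$ breaks into conjugacy classes each of size $|\gamma_3(G)|$, and $\gamma_3(G)=Z(G)$ consists of singleton classes; since $|G'|=|\gamma_2(G)|>|\gamma_3(G)|\geq 1$, $G'$ contains more than two conjugacy classes (the singletons from $Z(G)$ already number $|Z(G)|\geq p>1$, plus at least one more from $\gamma_2\setminus\gamma_3$), contradiction — so $G$ has class exactly $2$. For class $2$, $G'=Z(G)$ (actually $G'\leq Z(G)$; and Proposition \ref{lem:pcamclass} with class $2$ gives $x^G=x$ for $x\in\gamma_3(G)=Z(G)$, so every element of $G'\subseteq Z(G)$... more carefully: for class $2$, $\gamma_2(G)=G'$ and $\gamma_3(G)=\{e\}$, so Proposition \ref{lem:pcamclass} says elements of $G'\setminus\{e\}$ are singleton classes) — wait, that would make $G'$ a union of $|G'|$ classes, not two, unless $|G'|=2$. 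So $|G'|=2$, hence $|Z(G)|\geq 2$ and $G'=\Z_2$; combined with $G$ being a $p$-group this forces $p=2$, and a $2$-group with $G'$ of order $2$ that is Camina is extra-special (Camina forces $Z(G)=G'=\Z_2$ and $G/Z(G)$ elementary abelian with the non-degenerate commutator form, which is the definition of extra-special).

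The main obstacle I anticipate is the careful bookkeeping in case (3) using Proposition \ref{lem:pcamclass}: one must correctly match the terms $\gamma_2(G)$, $\gamma_3(G)=Z(G)$ to the actual nilpotency class (class $2$ versus class $3$) and count the conjugacy classes inside $G'=\gamma_2(G)$, because the formulas in Proposition \ref{lem:pcamclass} are stated for class-$3$ groups and must be specialized. The conclusion that a class-$2$ Camina $p$-group with $G'$ a union of two classes forces $|G'|=2$ and hence $p=2$ and extra-special is the crux, and it needs Proposition \ref{lem:pcamclass} (or a direct argument that in a Camina $p$-group $x^G=xZ(G)$ for $x\in G'\setminus Z(G)$ and singletons in $Z(G)$, plus $G'\leq Z(G)$ in class $2$) together with Proposition \ref{prop:cam2center} guaranteeing $Z(G)$ elementary abelian. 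A secondary subtlety is the ``transitive action forces elementary abelian'' step in the Frobenius cases, which I would justify by noting that a finite group $K$ admitting a group of automorphisms transitive on $K\setminus\{e\}$ has all nonidentity elements of the same order (so $K$ is an elementary abelian $p$-group by a standard lemma, since otherwise there exist non-commuting or different-order elements), and then the Frobenius complement acting semiregularly and transitively acts regularly, pinning its order to $|K|-1$.
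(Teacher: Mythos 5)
The paper itself offers no proof of this statement --- it is quoted verbatim from Cangelmi--Muktibodh \cite{Few_Conj} --- so your argument has to stand on its own. Most of it does: the ``if'' direction is routine, and in the ``only if'' direction your treatments of case (1) (transitivity of $G$-conjugation on $K\setminus\{e\}$ forces $K$ elementary abelian; since $K$ is then abelian, $G$-classes inside $K$ are exactly complement-orbits, and semiregularity pins $|H|=p^r-1$) and of the $p$-group case are essentially sound. In the class-$2$ subcase you do leave one step unjustified: after forcing $Z(G)=G'\cong\Z_2$ you assert that $G/Z(G)$ is elementary abelian; this is true but needs a reason, e.g.\ class $2$ gives $[x^2,g]=[x,g]^2=e$ for all $g$, so $x^2\in Z(G)=G'$ for every $x$, whence $\Phi(G)=G'$ and $G$ is extra-special. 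That is a small, fillable gap.

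The genuine error is your case (2). Two of its claims are false. First, for a Frobenius Camina group with complement $Q_8$ the derived subgroup is \emph{not} the Frobenius kernel: since $G/K\cong Q_8$ is nonabelian, $G'K/K=Q_8'\cong\Z_2$, while $[K,H]=K$, so $G'=K\langle z\rangle$ properly contains $K$ (here $z$ maps to the central involution of $Q_8$). Second, $Q_8$ \emph{does} act fixed-point-freely and transitively --- hence regularly --- on the $8$ nonzero vectors of $\Z_3^2$, being a Sylow $2$-subgroup of $\mathrm{SL}(2,3)$; this is precisely how the Camina group $\Z_3^2\rtimes Q_8$ of Theorem \ref{thm:main}(v) arises, so your stated reason for discarding case (2) evaporates. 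The fallback appeal to Theorem \ref{thm:main}(v) is not a proof either: that statement says nothing about how $G'$ decomposes into classes, and ``its $G'$ is not the union of two classes'' is exactly what must be shown. The correct exclusion is structural: in case (2) $G'\supsetneq K$, so $G'$ contains the class $\{e\}$, at least one class inside $K\setminus\{e\}$, and at least one class inside $G'\setminus K$ (elements outside the normal subgroup $K$ cannot be conjugate into $K$), hence at least three classes; for example, in $\Z_3^2\rtimes Q_8$ one gets classes of sizes $1$, $8$, $9$ inside $G'$. With that repair, and the exponent-$p$ remark above, your argument goes through.
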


\section{AC Terwilliger Algebras for Group Association Schemes}

For the remainder of this paper let $C_0=\{e\},C_1,C_2,\cdots, C_d$ be the conjugacy classes of $G$. We show:

\begin{Thm}\label{thm:acclassify}
        Let $G$ be a group. Then $T(G)$ is AC if and only if $G$ is isomorphic to one of the following groups: $(i)$ an abelian group; $(ii)$ the Frobenius group $\Z_p^r\rtimes \Z_{p^{r}-1}$, for prime $p$ and $r>0$; $(iii)$ a Camina $p-$group, for prime $p$; $(iv)$ the group $\Z_3^2\rtimes Q_8$.
    \end{Thm}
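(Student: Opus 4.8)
The plan is to apply Theorem~\ref{thm:sequiv} with $\S$ the conjugacy-class Schur ring, so $T(G)$ is AC precisely when: whenever $x\in C_i$, $y\in C_j$, $xy\in C_h$ and $C_i\neq C_j^{-1}$, we have $C_iC_j=C_h$. The strategy splits into the two directions. For the ``if'' direction I would verify this property for each of the four families. For abelian $G$ every conjugacy class is a singleton, so $C_iC_j$ is automatically a singleton and the condition is trivial. For the Frobenius group $\Z_p^r\rtimes \Z_{p^r-1}$ and for Camina $p$-groups I would invoke Theorem~\ref{thm:twoclass}, Proposition~\ref{lem:pcamclass}, and Proposition~\ref{thm:Camina3}: in a Camina group every class outside $G'$ is a full coset of $G'$, classes inside $G'$ are cosets of $Z(G)$ (in the class-$3$ case) or singletons, so the product of two classes is again a single class except in the ``inverse'' situation, where $C_iC_j$ can genuinely split (e.g. a coset of $G'$ times its inverse coset covers several classes inside $G'$). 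The case $\Z_3^2\rtimes Q_8$ is a single finite group, so I would simply check the defining property directly (by hand or via Magma, as the acknowledgment indicates), reading off its $7$ conjugacy classes.

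For the ``only if'' direction, assume $T(G)$ is AC and show $G$ must be abelian, the stated Frobenius group, a Camina $p$-group, or $\Z_3^2\rtimes Q_8$. The key move is: \emph{if $G$ is non-abelian then $G$ must be a Camina group}. To see this, suppose $g\notin G'$ but $g^G \subsetneq gG'$ (i.e. $G$ fails the Camina condition at $g$). I would pick an element witnessing that $g^G$ is a proper subset of $gG'$ and produce two elements $x\in C_i, y\in C_j$ with $xy\in C_h$, $C_i\neq C_j^{-1}$, yet $C_iC_j$ strictly larger than $C_h$, contradicting the criterion from Theorem~\ref{thm:sequiv}. A natural choice is to take $x$ ranging over $g^G$ and $y$ a carefully chosen fixed element (for instance an element of $G'\setminus\{e\}$ or a conjugate making the product land outside $g^G$) so that $C_iC_j$ meets more than one class while $C_j\neq C_i^{-1}$. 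Once $G$ is known to be Camina, I would apply the Dark--Scoppola trichotomy: $G$ is either Frobenius with cyclic complement, Frobenius with complement $Q_8$, or a $p$-group of class $2$ or $3$. The $p$-group case gives $(iii)$ directly. In the Frobenius cases one must further pin down the kernel and complement: for cyclic complement I would argue, again via the AC criterion applied to classes inside the kernel, that the kernel has to be elementary abelian and the complement order forces the structure $\Z_p^r\rtimes\Z_{p^r-1}$, matching Theorem~\ref{thm:twoclass}; for the $Q_8$-complement case a similar analysis should single out $\Z_3^2\rtimes Q_8$ as the only possibility (the small prime/rank being forced by the requirement that products of kernel-coset classes do not split into too many pieces).

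The main obstacle I anticipate is the ``only if'' direction --- specifically, extracting from the single combinatorial constraint ``$C_iC_j=C_h$ unless $C_i=C_j^{-1}$'' the strong structural conclusion that $G$ is Camina, and then narrowing the Frobenius cases down to the exact groups listed. The first part requires cleverly choosing the witnessing pair $x,y$ when the Camina condition fails; the subtlety is ensuring $C_i\neq C_j^{-1}$ while still forcing $C_iC_j$ to split, since the condition gives us no information in the excluded inverse case. The second part --- ruling out, say, $\Z_p^r\rtimes\Z_m$ for $m\ne p^r-1$ or $\Z_p^r\rtimes Q_8$ for $(p,r)\ne(3,2)$ --- will likely need an argument about how the complement acts on the kernel's classes, possibly combined with a direct count of intersection numbers; I would expect to lean on Theorem~\ref{thm:twoclass} (which already characterizes when $G'$ is a union of two classes) and on Proposition~\ref{prop:cammod} to reduce to quotients, supplementing with explicit computation for the sporadic group. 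A secondary technical point is handling Camina $p$-groups of class $3$: here $C_iC_j$ for $C_i,C_j$ cosets of $G'$ produces a union of $Z(G)$-cosets inside $G'$, and one must check this is a single class (which it is, by Proposition~\ref{lem:pcamclass}, since classes in $G'\setminus Z(G)$ are exactly $Z(G)$-cosets) except again in the inverse case --- so the class-$3$ subtlety is real but resolved by the cited structure of conjugacy classes.
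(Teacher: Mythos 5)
Your reduction is exactly the paper's first step: apply Theorem~\ref{thm:sequiv} to the conjugacy-class Schur ring, so that $T(G)$ is AC iff $x^Gy^G=(xy)^G$ whenever $x^G\neq (y^{-1})^G$. But at that point the paper is finished: this group-theoretic property is precisely the hypothesis of Theorem~\ref{thm:groupequiv} (Dade--Yadav), which is quoted in the paper and already classifies the groups satisfying it as abelian groups, $\Z_p^r\rtimes\Z_{p^r-1}$, Camina $p$-groups, and $\Z_3^2\rtimes Q_8$. You instead propose to re-derive that classification, and this is where the genuine gap lies. Your ``only if'' direction is a plan, not a proof: the central claim ``nonabelian $+$ AC $\Rightarrow$ Camina'' is left at the level of ``I would pick an element witnessing\dots and produce two elements\dots'', with no actual construction showing that a failure of the Camina condition forces some product $C_iC_j$ with $C_i\neq C_j^{-1}$ to split; and the subsequent narrowing of the Dark--Scoppola Frobenius branches to exactly $\Z_p^r\rtimes\Z_{p^r-1}$ and $\Z_3^2\rtimes Q_8$ is only asserted (``should single out''). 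That narrowing is not a formality: for example $\Z_5\rtimes\Z_2$ (the dihedral group of order $10$) is a Camina Frobenius group with cyclic complement, yet the class $\{r,r^{-1}\}$ times $\{r^2,r^{-2}\}$ is not a single class, so it fails the property --- being Camina is far from sufficient, and pinning down which Camina Frobenius groups survive is essentially the content of the Dade--Yadav paper, which you would in effect be rewriting.

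Your ``if'' direction sketch (singleton classes for abelian $G$; cosets of $G'$ outside $G'$ and of $Z(G)$ or singletons inside $G'$ for Camina $p$-groups via Proposition~\ref{lem:pcamclass}; Theorem~\ref{thm:twoclass} for the Frobenius group; a direct check for $\Z_3^2\rtimes Q_8$) is sound in outline and consistent with the computations the paper carries out later for the dimension counts, but in the paper even this direction is subsumed by the citation. The repair is simple: after invoking Theorem~\ref{thm:sequiv}, cite Theorem~\ref{thm:groupequiv} rather than attempting to prove it; as written, the hardest steps of your argument are acknowledged obstacles rather than completed proofs.
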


    We note that Theorem \ref{thm:acclassify} is part of Theorem \ref{thm:main}. The proof of this theorem immediately follows from Theorem \ref{thm:sequiv} and the following result by Dade and Yadav.

    \begin{Thm}[\cite{Product_Conj}]\label{thm:groupequiv} A  group $G$ satisfies the property that for all $x,y\in G$ such that $x^G\neq (y^{-1})^G$, $x^Gy^G=(xy)^G$ if and only if $G$ is isomorphic to exactly one of the following groups: $(i)$ an abelian group; $(ii)$ the Frobenius group $\Z_p^r\rtimes \Z_{p^{r}-1}$, for prime $p$ and $r>0$; $(iii)$ a Camina $p-$group, for prime $p$; $(iv)$ the group $\Z_3^2\rtimes Q_8$.
    \end{Thm}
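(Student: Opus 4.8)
The plan is to prove both implications of the stated equivalence, writing $(\ast)$ for the product property: for all $x,y\in G$ with $x^G\neq(y^{-1})^G$ one has $x^Gy^G=(xy)^G$. The forward direction (that $(\ast)$ forces $G$ into the list) is the substantial one; the reverse direction is a family-by-family verification. Throughout I pass to $\overline{G}=G/G'$ and use that each conjugacy class lies in a single coset of $G'$, so that $x^G\neq (y^{-1})^G$ holds automatically whenever $\overline{x}\,\overline{y}\neq\overline{e}$. I expect the main obstacle to be the first forward step, namely showing that $(\ast)$ forces $G$ to be abelian or a Camina group; once that is in hand, the theorem of Dark and Scoppola \cite{Dark1} together with the structural results of Section~3 reduces everything to short arguments.

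For the reverse direction the abelian case is immediate, since every class is a singleton. For the Camina families I would use the explicit class descriptions: every class outside $G'$ is a full coset $xG'$, while the classes inside $G'$ are given (in the $p$-group case) by Proposition \ref{lem:pcamclass}, with $G/G'$ and $G'/Z(G)$ elementary abelian (Propositions \ref{thm:Camina3} and \ref{prop:cammod}). The verification of $(\ast)$ then reduces to coset multiplication: a product of two non-inverse coset-classes $xG'\cdot yG'$ is the single coset-class $(xy)G'$ exactly when $\overline{x}\,\overline{y}\neq\overline{e}$; products involving a class inside $G'$ collapse onto a single coset because $G'S=G'$ for $S\subseteq G'$; and products of two classes inside $G'$ are single classes by the same computation carried out within $G'$, using that $G'/Z(G)$ is elementary abelian. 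The only way a product can fail to be a single class is for it to contain $e$, and that is precisely the inverse-class case excluded by $(\ast)$. The exceptional group $\Z_3^2\rtimes Q_8$ of (iv) I would check directly from its class structure (or with \cite{Magma}).

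For the forward direction I would first dispose of the perfect case $G'=G$ by a short argument applying $(\ast)$ to a class of minimal nontrivial size, and otherwise work in the nontrivial quotient $\overline{G}$. Let $r_{a}$ be the number of conjugacy classes inside the coset $aG'$. The key observation is that when $\overline{a}\,\overline{b}\neq\overline{e}$, fixing a class $D$ over $\overline{b}$ and multiplying each class over $\overline{a}$ by $D$ realizes, via $(\ast)$, every class over $\overline{ab}$ as a single product; hence $r_{a}\geq r_{ab}$, and letting $\overline{b}$ vary gives $r_{c}\leq r_{a}$ for every $\overline{c}\neq\overline{e}$ and every $\overline{a}$. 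Thus all nontrivial cosets carry a common number $\mu$ of classes, and a matching size estimate (each product class is at least as large as either factor, forcing equality on average) shows every class outside $G'$ has the common size $|G'|/\mu$. The hard part, and the crux of the whole proof, is upgrading this to $\mu=1$, i.e. to $G$ being Camina. Assuming $\mu\geq 2$, an inverse pair of cosets now contains distinct non-inverse classes, so $(\ast)$ becomes available for products landing in $G'$; the resulting bijective (``Latin-square'') correspondence among the classes, fed back through the uniform size $|G'|/\mu$, is expected to produce a subgroup of $G'$ of forbidden order, a contradiction. This elimination of $\mu\geq 2$ is exactly where the genuine technical work lies.

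Once $G$ is known to be abelian or Camina, I would invoke Dark--Scoppola \cite{Dark1}: a Camina group is a $p$-group of nilpotency class $2$ or $3$, a Frobenius group with cyclic complement, or a Frobenius group with complement $Q_8$. The $p$-group alternative is exactly case (iii). In the two Frobenius cases the complement $H$ acts fixed-point-freely on the kernel $K$, so $C_G(k)\leq K$ for $k\in K\setminus\{e\}$; after checking that $K$ is abelian (by comparing the class sizes of central versus non-central kernel elements under $(\ast)$), the $G$-classes inside $K$ are the nontrivial $H$-orbits, all of size $|H|$. Applying $(\ast)$ to two such orbits and analysing the translation-stabiliser of their product forces an $H$-invariant subgroup $S\leq K$ with $|S|=|H|$; since $|K|\equiv 1 \pmod{|H|}$ this is impossible unless there is a single nontrivial orbit. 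Hence $K\setminus\{e\}$ is one class, $K\cong\Z_p^r$ is elementary abelian, and $|H|=p^r-1$. If $H$ is cyclic this is the group of (ii) (equivalently $G'=K$ is a union of two classes and Theorem \ref{thm:twoclass} applies), while if $H=Q_8$ then $p^r-1=8$ forces $K=\Z_3^2$ and the group of (iv). Finally the four families are pairwise non-isomorphic, which gives the ``exactly one'' clause.
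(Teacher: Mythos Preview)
The paper does not prove Theorem~\ref{thm:groupequiv} at all: it is quoted as a result of Dade and Yadav \cite{Product_Conj} and used as a black box, together with Theorem~\ref{thm:sequiv}, to obtain Theorem~\ref{thm:acclassify}. Your proposal therefore attempts something the paper never undertakes, and the relevant comparison is not ``same approach vs.\ different approach'' but ``citation vs.\ independent proof.''

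Viewed as an independent proof, your outline has a genuine gap at exactly the place you flag as the crux. You reduce the forward direction to showing that the common number $\mu$ of classes in each nontrivial coset of $G'$ equals $1$ (i.e.\ that $G$ is Camina), and then write that feeding a Latin-square correspondence back through the uniform class size ``is expected to produce a subgroup of $G'$ of forbidden order, a contradiction.'' No mechanism is supplied for manufacturing that subgroup, and this is precisely the substantive content of \cite{Product_Conj}; nothing in Section~3 of the present paper provides it either. The surrounding steps are also only stated as intentions: the disposal of the perfect case $G=G'$, the claim that all classes outside $G'$ share the size $|G'|/\mu$ (your averaging sketch shows $|CD|\geq\max(|C|,|D|)$ but not that every class attains the maximum), and the abelianness of the Frobenius kernel are each asserted rather than argued. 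If your aim is to match the paper, a one-line citation of \cite{Product_Conj} suffices; if your aim is a self-contained proof, the $\mu=1$ step in particular must be carried out in full.
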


    Now we shall compute $\dim T(G)$ in each case $(i)-(iv)$. For abelian groups, we have:

\begin{Prop}[Bannai and Munemasa, \cite{Bannaiarticle}] Let $G$ be a  abelian group. Then $\dim T(G)=|G|^2$.
\end{Prop}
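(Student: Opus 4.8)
The plan is to show directly that $T(G)$ is the full matrix algebra $M_{|G|}(\C)$, so that $\dim T(G) = |G|^2$. First I would record the shape of $\mathcal{G}(G)$ for abelian $G$. Since every conjugacy class is a singleton, the principal sets of the defining Schur ring are the singletons $\{g\}$, $g \in G$, so $d+1 = |G|$, and the adjacency matrix $A_g$ attached to $\{g\}$ satisfies $(A_g)_{xy} = 1 \iff yx^{-1} = g$; that is, $A_g$ is the permutation matrix of (regular) multiplication by $g$. Hence $\mathfrak{A} = \Span_\C\{A_g : g \in G\}$ is the image of the regular representation of $\C[G]$ and has dimension $|G|$. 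Next I would identify $\mathfrak{A}^*(e)$: by the defining rule, $(E_g^*(e))_{yy} = 1 \iff y = g$, so $E_g^*(e)$ is the diagonal matrix with a single $1$ in position $(g,g)$. Thus $\mathfrak{A}^*(e)$ is the algebra of all diagonal matrices, of dimension $|G|$, and in particular it contains every diagonal matrix unit.

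The crux is then a one-line computation inside $T(G)$. For $g, h \in G$ the product $A_g\, E_h^*(e)$ lies in $T(G)$; since multiplying on the right by $E_h^*(e)$ keeps only column $h$, and the nonzero entry of $A_g$ in that column lies in row $g^{-1}h$, this product is the matrix unit with a single $1$ in position $(g^{-1}h,\,h)$. As $g$ and $h$ range over $G$, the pairs $(g^{-1}h,\,h)$ exhaust $G \times G$, so $T(G)$ contains every matrix unit. Therefore $T(G) = M_{|G|}(\C)$ and $\dim T(G) = |G|^2$.

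There is essentially no real obstacle here; the only point needing care is to match the row/column conventions of the adjacency matrices and the dual matrices so that $A_g E_h^*(e)$ really is a single matrix unit with the stated indices rather than one with the indices permuted — a routine check. For completeness I would also mention an even shorter argument that avoids exhibiting matrix units: every Terwilliger algebra contains its primary component as an ideal of dimension $(d+1)^2$, so here $\dim T(G) \ge (d+1)^2 = |G|^2$, and combined with the trivial inclusion $T(G) \subseteq M_{|G|}(\C)$ this forces $\dim T(G) = |G|^2$ (and incidentally shows $T(G)$ has no non-primary Wedderburn components). I would present the matrix-unit computation as the main proof, since it is self-contained and pins down $T(G)$ explicitly.
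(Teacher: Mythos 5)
Your proposal is correct. Note, though, that the paper itself gives no proof of this statement at all: it is quoted as a known result of Bannai and Munemasa, with a citation, so there is no internal argument to compare against. Your matrix-unit computation does furnish a valid self-contained proof: with the paper's conventions, $(A_g)_{xy}=1$ iff $yx^{-1}=g$ and $(E_h^*(e))_{yy}=1$ iff $y=h$, so $(A_gE_h^*(e))_{xy}=(A_g)_{xh}\delta_{yh}$, which is $1$ exactly when $x=g^{-1}h$ and $y=h$; hence $A_gE_h^*(e)$ is the matrix unit in position $(g^{-1}h,h)$, and as $(g,h)$ runs over $G\times G$ these positions exhaust all of $G\times G$, giving $T(G)=M_{|G|}(\C)$. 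Your alternative argument is also sound and is the quickest route given facts already stated in the paper: the primary component is an ideal of dimension $(d+1)^2$, and for abelian $G$ every class is a singleton so $d+1=|G|$; combined with $T(G)\subseteq M_{|G|}(\C)$ this pins $\dim T(G)=|G|^2$ immediately (and shows there are no non-primary components, consistent with the remark in Section 5). The explicit matrix-unit version buys more: it identifies $T(G)$ as the full matrix algebra concretely, which is stronger than the dimension count and makes the Wedderburn statement in Theorem 1.1(i) transparent.
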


A direct computation using Magma \cite{Magma} yields:

\begin{Thm}\label{cor:72,41 dim}
        Let $G=\Z_3^2\rtimes Q_8$. Then $\dim T(G)=44$.
    \end{Thm}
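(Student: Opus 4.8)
The plan is to bypass the direct matrix computation and instead count triples $(i,j,k)$ with $p_{ij}^k\neq 0$. Since $G=\Z_3^2\rtimes Q_8$ is case $(iv)$ of Theorem~\ref{thm:acclassify}, the algebra $T(G)$ is AC, hence triply regular by Theorem~\ref{thm:tanaka}; therefore $T(G)=T_0(e)$ and $\dim T(G)=|\{(i,j,k):p_{ij}^k\neq 0\}|$. For the group association scheme $\mathcal{G}(G)$ one has $p_{ij}^k\neq 0$ precisely when $C_k$ is contained in the product set $C_iC_j=\{ab:a\in C_i,\ b\in C_j\}$ (the order of $i,j$ is immaterial for the count, since we sum over all ordered pairs). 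Hence $\dim T(G)=\sum_{i,j}N(i,j)$, where $N(i,j)$ denotes the number of conjugacy classes contained in $C_iC_j$.

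I would then split this sum according to whether $C_i=C_j^*$. By Theorem~\ref{thm:groupequiv} (the Dade--Yadav product property, which applies since $G$ is of type $(iv)$), for every ordered pair with $C_i\neq C_j^*$ the product $C_iC_j$ is a single conjugacy class, so $N(i,j)=1$; writing $d+1$ for the number of conjugacy classes of $G$, there are $(d+1)^2-(d+1)$ such pairs. For the remaining $d+1$ pairs $(i,i')$ with $C_{i'}=C_i^*$, the set $C_iC_i^*$ is conjugation-invariant and contains $e$, and $N(i,i')=|\{k:C_k\subseteq C_iC_i^*\}|$. Thus $\dim T(G)=\big((d+1)^2-(d+1)\big)+\sum_i N(i,i')$, so everything comes down to understanding the products $C_iC_i^*$.

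Finally I would determine the conjugacy classes of $G$ explicitly from its Frobenius structure: $G$ has kernel $K=\Z_3^2$ and complement $Q_8$ acting regularly on $K\setminus\{e\}$, so $G$ has exactly six conjugacy classes, namely $\{e\}$, the eight nonzero vectors of $K$, and the preimages in $G$ of the four nonidentity classes of $Q_8\cong G/K$ (which are single classes, of sizes $9,18,18,18$). Each class is self-inverse, so $C_i^*=C_i$ and $N(i,i')=|\{k:C_k\subseteq C_iC_i\}|$. It then remains to compute the six product sets $C_iC_i$: reducing modulo $K$ forces each into a union of at most two cosets of $K$ (the image in $G/K\cong Q_8$ being $\{1\}$ or $\{1,-1\}$), and a short check of which cosets are actually attained yields $C_0C_0=\{e\}$, $C_1C_1=C_2C_2=K=C_0\cup C_1$, and $C_3C_3=C_4C_4=C_5C_5=C_0\cup C_1\cup C_2$; these contain $1,2,2,3,3,3$ conjugacy classes respectively. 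Therefore $\dim T(G)=(6^2-6)+(1+2+2+3+3+3)=30+14=44$. The main obstacle is precisely this last step --- verifying the six products $C_iC_i$ --- though it is only an elementary computation inside $G$; alternatively one obtains the dimension by generating $T(G)\subseteq M_{72}(\C)$ directly in Magma (as the authors do), or by combining the wreath decomposition $\mathcal{G}(G)=\mathcal{K}_9\wr\mathcal{G}(\Z_2)\wr\mathcal{G}(\Z_2^2)$ of Theorem~\ref{thm:main} with the dimension formula for Terwilliger algebras of wreath products in \cite{WreathProduct1}.
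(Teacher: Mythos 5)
Your argument is correct, but it is not the paper's: the paper establishes this statement purely by a direct Magma computation of $T(G)\subseteq M_{72}(\C)$ and gives no hand count. Your route instead mirrors the technique the paper itself uses in the Frobenius and Camina cases (Theorems \ref{cor:frobdim} and \ref{cor:p2dim}): since $G$ is case $(iv)$ of Theorem \ref{thm:acclassify}, $T(G)$ is AC, hence triply regular by Theorem \ref{thm:tanaka}, so $\dim T(G)=|\{(i,j,k)\colon p_{ij}^k\neq 0\}|$, and for the group scheme $p_{ij}^k\neq 0$ exactly when $C_k\subseteq C_iC_j$. Your class data check out: $G$ is Frobenius with kernel $K=\Z_3^2$ and complement $Q_8$ acting fixed-point-freely, giving six classes of sizes $1,8,9,18,18,18$, all self-inverse; Theorem \ref{thm:groupequiv} then gives $N(i,j)=1$ for the $36-6=30$ ordered pairs with $C_i\neq C_j^*$; and the diagonal products are as you state ($C_0C_0=\{e\}$, $C_1C_1=C_2C_2=K$, and $C_mC_m=K\cup K(-1)$ for the three classes of size $18$, since the image of $C_mC_m$ in $G/K\cong Q_8$ is $\{1,-1\}$ and both cosets are fully attained), contributing $1+2+2+3+3+3=14$, so $\dim T(G)=30+14=44$ in agreement with the Magma value. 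What your approach buys is a computer-free, conceptual verification that fits the pattern of Section 4; what the paper's approach buys is brevity and no need to verify the conjugacy-class structure and the six products by hand. The one point to make explicit in a write-up is that the intended group is the Frobenius semidirect product (the fixed-point-free embedding $Q_8\leq GL_2(3)$), which is exactly what justifies your list of classes and their self-inverse property.
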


         \begin{Thm}\label{cor:frobdim}
            Let $G=\Z_{p}^r\rtimes \Z_{p^r-1}$ for some prime $p$ and $r\geq 1$. Then $\dim T(G)=p^{2r}+p^r-1$.
        \end{Thm}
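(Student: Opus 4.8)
The plan is to compute $\dim T(G)$ directly from the combinatorial data of the group association scheme $\mathcal{G}(G)$ for $G = \Z_p^r \rtimes \Z_{p^r-1}$, using the fact (Theorem \ref{thm:tanaka}, since $T(G)$ is AC by Theorem \ref{thm:acclassify}) that $T(G)$ is triply regular, so $\dim T(G) = \dim T_0(G) = |\{(i,j,k) : p_{ij}^k \neq 0\}|$. First I would recall the conjugacy class structure of $G$: writing $N = \Z_p^r$ for the Frobenius kernel, $G$ has $N \setminus \{e\}$ as a single conjugacy class $C_1$ (of size $p^r-1$), and each nonidentity element of the cyclic complement $\Z_{p^r-1}$ gives a conjugacy class that is a full coset of $N$, of size $p^r$; together with $C_0 = \{e\}$ this gives $d+1 = 1 + 1 + (p^r-2) = p^r$ classes, so $d = p^r - 1$.

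Next I would enumerate the triples $(i,j,k)$ with $p_{ij}^k \neq 0$ by cases according to how many of the indices are $0$ or equal to $1$ (the "kernel" class) versus a "coset" class. Because $T(G)$ is AC, Theorem \ref{thm:sequiv}/\ref{thm:groupequiv} tells us that for $C_i \ne C_j^*$ we have $C_i C_j = C_k$ for a single $k$, so such pairs $(i,j)$ contribute exactly one triple each; the only pairs that can contribute more than one $k$ are those with $C_i = C_j^* = C_j$ (all nonidentity classes here are real, being cosets of $N$ or the single class $C_1 = C_1^*$). The careful bookkeeping is: (a) triples involving index $0$ — there are $3(d+1) - 2 = 3p^r - 2$ of these (the standard count $|\{(i,j,k): \text{one of them } 0\}|$ for any scheme), (b) triples with all of $i,j,k$ nonzero and $C_i \ne C_j^*$, each pair $(i,j)$ of this type giving one $k$, and (c) the "diagonal" pairs $C_i = C_j^*$ with $i,j \ne 0$, where $C_iC_i^*$ contains $e$ and one must count how many classes appear in the product $C_i C_i^{-1}$. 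For $i = 1$ (the kernel class), $C_1 C_1^{-1} = C_1 C_1 = N$ spreads over $\{e\} \cup C_1$, contributing $2$ values of $k$; for each of the $p^r - 2$ coset classes $C_i$, $C_i C_i^{-1}$ is a product of two cosets of $N$ lying in $N$, and since $G/N$ is abelian this product is again contained in $N = \{e\}\cup C_1$, contributing $2$ values of $k$ each. Assembling these counts and simplifying should yield $p^{2r} + p^r - 1$.

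The main obstacle I anticipate is getting the case (b) count exactly right — i.e., counting ordered pairs $(C_i, C_j)$ of nonidentity classes with $C_i \neq C_j^*$ and verifying in each subcase that the product lands in a nonidentity class (so that $k \neq 0$, avoiding double-counting with the index-$0$ triples). Concretely: a coset class times the kernel class, kernel class times a coset class, and coset class times coset class (when the cosets are not mutually inverse in $G/N$) — in each of these the product is a full nonidentity class, and one must tally $(p^r-2)\cdot 1 + 1\cdot(p^r-2) + (p^r-2)(p^r-3)$ ordered pairs, each contributing one triple. Combining: $\dim T(G) = (3p^r - 2) + \big[(p^r-2) + (p^r-2) + (p^r-2)(p^r-3)\big] + \big[2 + 2(p^r-2)\big]$, and the arithmetic collapses to $p^{2r} + p^r - 1$. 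As a sanity check I would verify the formula against the degenerate case $r=1$, where $G = \Z_p \rtimes \Z_{p-1}$ has $d+1 = p$ classes and the formula gives $p^2 + p - 1$, which can be confirmed by the general dihedral/Frobenius computation or directly in Magma as the paper notes. A cleaner alternative, which I would mention as a remark, is to instead read off $\dim T(G)$ from the Wedderburn decomposition obtained in Section 5 via the wreath-product description $\mathcal{G}(G) = \mathcal{K}_{p^r} \wr \mathcal{G}(\Z_{p^r-1})$ together with the dimension formulas of \cite{WreathProduct1}, which would give an independent derivation of the same number.
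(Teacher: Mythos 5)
Your overall strategy is the same as the paper's: invoke Theorems \ref{thm:acclassify} and \ref{thm:tanaka} to get triple regularity, so $\dim T(G)=|\{(i,j,k)\colon p_{ij}^k\neq 0\}|$, and then tally the nonzero intersection numbers by cases according to whether a class is $\{e\}$, the kernel class $C_1=\Z_p^r\setminus\{e\}$, or a coset class. Your class count ($p^r$ classes) and your case (b) are correct: for ordered pairs of nonidentity classes with $C_j\neq C_i^*$ the AC property gives exactly one $k$, necessarily nonzero, and your tally $(p^r-2)+(p^r-2)+(p^r-2)(p^r-3)=(p^r-1)(p^r-2)$ agrees with the paper.

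However, the final assembly double-counts. Your case (a), the ``standard count'' $3(d+1)-2=3p^r-2$ of triples with at least one index equal to $0$, already includes every triple of the form $(i,i^*,0)$, i.e.\ all $k=0$ triples. But in case (c) you then credit each diagonal pair $(C_i,C_i^*)$ with ``$2$ values of $k$'' (namely $k=0$ and $k=1$), so the $p^r-1$ triples $(i,i^*,0)$ with $i\neq 0$ are counted twice. Indeed your displayed sum $(3p^r-2)+\bigl[(p^r-2)+(p^r-2)+(p^r-2)(p^r-3)\bigr]+\bigl[2+2(p^r-2)\bigr]$ equals $p^{2r}+2p^r-2$, not $p^{2r}+p^r-1$ as you assert; the discrepancy is exactly $p^r-1$. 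The fix is routine: either restrict (c) to $k\neq 0$, noting that for every diagonal pair the support of $\overline{C_i}\cdot\overline{C_i^*}$ is $\{e\}\cup C_1$ so only $k=1$ is new (giving $1+(p^r-2)=p^r-1$), or replace (a) by the count $2(d+1)-1=2p^r-1$ of triples with $i=0$ or $j=0$ (as the paper does in its Case 1). With either correction the total is $p^{2r}+p^r-1$ and the argument matches the paper's proof; your closing remark about instead reading the dimension off the wreath-product decomposition $\mathcal{K}_{p^r}\wr\mathcal{G}(\Z_{p^r-1})$ via \cite{WreathProduct1} is a reasonable independent check but is not developed enough to stand on its own.
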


        \begin{proof}
         By Theorems \ref{thm:tanaka} and \ref{thm:acclassify}, $T(G)$ is AC and triply regular. Therefore, $T(G)=T_0(G)$ and $\dim T(G)=|\{(i,j,k)\colon p_{ij}^k\neq 0\}|$. 
         
         Since $G$ is a Camina group, its classes outside of $G'$ are all cosets of $G'$. By Theorem \ref{thm:twoclass}, the classes of $G$ inside $G'$ are $\{e\}$ and $G'\setminus \{e\}$. Note that $G'\cong \Z_{p}^r$ and $G/G'\cong \Z_{p^r-1}$. Then there are $p^r-2$ classes of $G$ outside of $G'$. So the number of classes of $G$ is $2+(p^r-2)=p^r$. Recall that
         \[\dim T_0(G)=|\{(i,j,k)\colon p_{ij}^k\neq 0\}|.\]
         We count the number of triples with $p_{ij}^k\neq 0$ by considering cases based on the types of classes. Let $C_0=\{e\}$ and $C_1=G'\setminus \{e\}$.\\
        {\bf Case 1: }$C_i=C_0$. Now $\overline{C_0}\cdot\overline{C_j}=\overline{C_j}$ for all $j$, thus there are $p^r$ triples $(0,j,j)$ (or $(j,0,j)$) such that $p_{0j}^j\neq 0$ (or $p_{j0}^j\neq 0$), giving a total of $2p^r$ triples that involve the class $C_0=\{e\}$ with $p_{ij}^h\neq 0$. This double counts the triple $(0,0,0)$. Hence, there are $2p^r-1$ nonzero triples that involve $C_0=\{e\}$.\\
        {\bf Case 2: }$C_i=C_1$. We consider $\overline{C_1}\cdot\overline{C_j}$, where $j>1$. Then $\overline{C_1}\cdot\overline{C_j}=\overline{(G'-e)}\cdot\overline{gG'}=|G'|\overline{gG'}-\overline{gG'}=(|G'|-1)\overline{gG'}=(p^r-1)\overline{C_j}$. So $(1,j,j)$ and $(j,1,j)$ are triples with $p_{1j}^j=p_{j1}^j\neq 0$. There are $p^r-2$ possible choices for $C_j$, so we have $2(p^r-2)$ triples. Now $\overline{C_1}\cdot\overline{C_1}=\overline{G'\setminus \{e\}}\cdot \overline{G'\setminus \{e\}}=|G'|\overline{G'}-2\overline{G'}+\{e\}=(p^n-2)\overline{G'\setminus \{e\}}+(p^n-1)\overline{\{e\}}$. Then the triples $(1,1,0)$ and $(1,1,1)$ also have nonzero $p_{11}^0$ and $p_{11}^1$. Thus, we get $2p^r-4+2=2p^r-2$ nonzero triples.\\
        {\bf Case 3: }$C_i=gG'$ where $g\not\in G'$. We only need to consider $C_j=hG'$. For $C_j\neq g^{-1}G'$, we have $C_iC_j=ghG'\neq G'$, so $\overline{C_i}\cdot\overline{C_j}=|G'|\overline{ghG'}=|G'|\overline{C_k}$ for some conjugacy class $C_k$. Thus, $(i,j,k)$ is the only triple with our choice of $i,j$ such that $p_{ij}^k\neq 0$ in this case. With $C_i$ fixed, we have a total of $p^r-2-1$ choices for $C_j$ such that $C_j\neq g^{-1}G'$. As each choice of $C_j$ gives a unique $C_k$ such that $p_{ij}^k\neq 0$ we get a total of $p^r-3$ triples $(i,j,k)$ in this case. If $C_j=g^{-1}G'$, then $\overline{C_i}\cdot\overline{C_j}=\overline{gG'}\cdot\overline{g^{-1}G'}=|G'|\overline{G'}=|G'|\overline{C_0}+|G'|\overline{C_1}$. Thus, the triples $(i,j,0)$ and $(i,j,1)$ have nonzero $p_{ij}^0$ and $p_{ij}^1$ in this case. We then have found a total of $p^r-1$ different $(i,j,k)$ such that $p_{ij}^k\neq 0$ for our fixed $C_i$. We have a total of $p^r-2$ choices for $C_i$, hence there are a total of $(p^r-1)(p^r-2)$ triples $(i,j,k)$ such that $p_{ij}^k\neq 0$. This gives
        \[\dim(T(G))=2p^r-1+2p^r-2+(p^r-2)(p^r-1)=p^{2r}+p^r-1. \qedhere\]
        \end{proof}

        \begin{Thm}\label{cor:p2dim}
            Let the Camina $p-$group $G$ have nilpotency class $2$, where $|G|=p^n$ and $|Z(G)|=p^k$. Then $\dim T(G)=(p^{n-k}+p^k-1)(p^{n-k}+p^k-2)+p^{n}$.
        \end{Thm}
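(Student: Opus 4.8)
The plan is to mirror the proof of Theorem~\ref{cor:frobdim}. Since a Camina $p$-group of nilpotency class $2$ is one of the groups listed in Theorem~\ref{thm:acclassify}, the algebra $T(G)$ is AC, hence triply regular by Theorem~\ref{thm:tanaka}. Therefore $T(G)=T_0(G)$ and
\[
\dim T(G)=\bigl|\{(i,j,h)\colon p_{ij}^h\neq 0\}\bigr|,
\]
so the whole problem reduces to counting nonzero intersection numbers. Because every structure constant $p_{ij}^h$ of a group association scheme is a nonnegative integer, $p_{ij}^h\neq 0$ precisely when $C_h$ lies in the product set $C_iC_j$; hence it suffices to compute the support of each product $\overline{C_i}\cdot\overline{C_j}$ in $\C[G]$.

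First I would pin down the conjugacy class structure. Since $G$ has class $2$, $G'\leq Z(G)$; and since $G$ is Camina and non-abelian, any $z\in Z(G)\setminus G'$ would satisfy $\{z\}=z^G=zG'$, forcing $|G'|=1$, a contradiction. Hence $Z(G)=G'$, so $|G'|=p^k$ and $|G/G'|=p^{n-k}$. Consequently the conjugacy classes of $G$ are the $p^k$ singletons $\{z\}$ with $z\in Z(G)$ (the \emph{central classes}) together with the $p^{n-k}-1$ nonidentity cosets $xG'$ with $x\notin G'$ (the \emph{non-central classes}), the latter being classes by the Camina condition; in total there are $p^k+p^{n-k}-1$ classes.

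Next I would count the nonzero triples $(i,j,h)$ according to which of $C_i,C_j$ are central, repeatedly using that $G'$ is central to simplify products. If $C_i=\{a\}$ and $C_j=\{b\}$ are both central, then $\overline{C_i}\cdot\overline{C_j}=\overline{\{ab\}}$ is a single central class, giving $p^{2k}$ triples. If exactly one of the two is central, say $C_i=\{a\}$ with $a\in G'$ and $C_j=xG'$, then $\overline{C_i}\cdot\overline{C_j}=\overline{xG'}=\overline{C_j}$, one triple; with the symmetric case this yields $2p^k(p^{n-k}-1)$ triples. If $C_i=xG'$ and $C_j=yG'$ are both non-central, then $\overline{C_i}\cdot\overline{C_j}=p^k\,\overline{xyG'}$; when $C_j\neq C_i^*$ (equivalently $xy\notin G'$) this is $p^k\overline{C_h}$ for the single non-central class $C_h=xyG'$, contributing $(p^{n-k}-1)(p^{n-k}-2)$ triples, while when $C_j=C_i^*$ we get $p^k\,\overline{G'}=p^k\sum_{z\in Z(G)}\overline{\{z\}}$, whose support is all $p^k$ central classes, contributing $(p^{n-k}-1)p^k$ triples. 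Summing,
\[
\dim T(G)=p^{2k}+2p^k(p^{n-k}-1)+(p^{n-k}-1)(p^{n-k}-2)+p^k(p^{n-k}-1),
\]
which rearranges to $(p^{n-k}+p^k-1)(p^{n-k}+p^k-2)+p^n$.

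I expect the only genuine content --- as opposed to bookkeeping --- to be the identification $Z(G)=G'$, which makes the class structure completely transparent, and the evaluation $\overline{xG'}\cdot\overline{x^{-1}G'}=p^k\overline{G'}$ together with the observation that $\overline{G'}$ splits as a sum over all $p^k$ central classes; this last point is the source of the extra $p^n$ term and is the easiest place to miscount. One must also check that the four cases are genuinely disjoint as sets of ordered triples and that ``$C_j=C_i^*$'' accounts for exactly one ordered pair per non-central $C_i$, even when $C_i=C_i^*$.
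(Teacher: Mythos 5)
Your proposal is correct: the class structure you describe (central singletons plus the $p^{n-k}-1$ nontrivial cosets of $G'=Z(G)$), the four product computations, and the final tally all check out, and the algebra $p^{2k}+2p^k(p^{n-k}-1)+(p^{n-k}-1)(p^{n-k}-2)+p^k(p^{n-k}-1)=(p^{n-k}+p^k-1)(p^{n-k}+p^k-2)+p^n$ is right. However, your route differs from the paper's proof of this particular theorem: you fix the pair $(i,j)$ and compute the support of every product $\overline{C_i}\cdot\overline{C_j}$ in the class algebra, which is exactly how the paper handles the Frobenius case (Theorem \ref{cor:frobdim}), whereas for the class-$2$ Camina case the paper instead fixes $(i,h)$ and invokes Tanaka's condition (3) from Theorem \ref{thm:tanaka}: for $i\neq h$ the AC property gives exactly one $j$ with $p_{ij}^h\neq 0$, so the bulk term $(p^{n-k}+p^k-1)(p^{n-k}+p^k-2)$ falls out with no computation at all, and only the diagonal case $i=h$ (split according to whether $C_i$ is central) needs explicit class products. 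The paper's argument is therefore shorter and makes clearer where the two summands in the formula come from; your argument is more self-contained and more informative, since it produces all the structure constants explicitly (e.g.\ $\overline{xG'}\cdot\overline{x^{-1}G'}=p^k\,\overline{G'}$), and your short derivation of $Z(G)=G'$ replaces the paper's appeal to the lower central series. Both rest on the same reduction: AC implies triple regularity, so $\dim T(G)$ equals the number of nonzero intersection numbers.
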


\begin{proof}
By Theorems \ref{thm:tanaka} and \ref{thm:acclassify}, $T(G)$ is AC and triply regular. Therefore, $\dim T(G)=|\{(i,j,h)\colon p_{ij}^h\neq 0\}|$. As $G$ has nilpotency class $2$, its lower central series is $\{e\}\triangleleft Z(G)=G'\triangleleft G$.

    As $G$ is a Camina group the conjugacy classes of $G$ are those inside $G'=Z(G)$ and the non-trivial cosets of $G/G'$. Then there are $p^{n-k}-1+p^k$ classes. We denote these classes as $C_i$. For fixed $i,h$ we find the number of nonzero $p_{ij}^h$.\\
    {\bf Case 1: }$i\neq h$. As $T(G)$ is AC, by Theorem \ref{thm:tanaka} there is a unique $j$ such that $p_{ij}^h\neq 0$. We have $p^{n-k}+p^k-1$ choices for $C_i$ and $p^{n-k}+p^k-2$ choices for $C_h$, resulting in $(p^{n-k}+p^k-1)(p^{n-k}+p^k-2)$ triples $(i,j,h)$ with $p_{ij}^h\neq 0$.\\
    {\bf Case 2: }$i=h$ and $C_i=\{g\}\subseteq Z(G)$. We have $p_{ij}^i\neq 0$ if and only if $C_i\subseteq C_iC_j$ if and only if $e\in C_j$, if and only if $C_j=\{e\}$, if and only if $j=0$. We have $p^k$ choices for $i=h$, so there are a total of $p^k$ triples $(i,j,h)$ with $p_{ij}^h\neq 0$ in this case.\\
    {\bf Case 3: }$i=h$ with $C_i\not\subseteq Z(G)$. Then $C_i=gZ(G)$, $g\not\in Z(G)$. For $C_j\not\subseteq Z(G)$ we have $C_j=xZ(G)$ for $x\not\in Z(G)$. Then $\overline{C_i}\cdot\overline{C_j}=|Z(G)|\overline{gxZ(G)}$. As $g\in gZ(G)$, for $p_{ij}^h\neq 0$ we must have $gxZ(G)=gZ(G)$, which is false. Thus, $p_{ij}^h=0$ for all $C_j\not\subseteq Z(G)$. For any $C_j=\{x\}\subseteq Z(G)$, $\overline{C_i}\cdot\overline{C_j}=\overline{gxZ(G)}=\overline{gZ(G)}=\overline{C_h}$. So $p_{ij}^h=1$ in this case. Hence, for any $C_j\subseteq Z(G)$ we have $p_{ij}^h\neq 0$. We have $p^k$ choices for $C_j$ for each $C_i=C_h$. There are $p^{n-k}-1$ $C_i$ and so there are $p^k(p^{n-k}-1)$ triples $(i,j,h)$ such that $p_{ij}^h\neq 0$ in this case.

    Having considered every possible case we obtain:
    \[\dim T(G)=(p^{n-k}+p^k-1)(p^{n-k}+p^k-2)+p^k+p^{k}(p^{n-k}-1)=(p^{n-k}+p^k-1)(p^{n-k}+p^k-2)+p^n.\qedhere\]
\end{proof}

By a similar argument we have:

\begin{Thm}\label{cor:p3dim}
    Let the Camina $p-$group $G$ have nilpotency class $3$. Suppose $|Z(G)|=p^k$. From Proposition \ref{thm:Camina3}, we have $|G\colon G'|=p^{2n}$, and $|G'\colon Z(G)|=p^n$ for some even integer $n$. Then 
    \[\dim T(G)=3p^{k+n}+3p^{k+2n}+p^{2k}-6p^k+p^{4n}+3p^{3n}-5p^{2n}-6p^n+7.\]
\end{Thm}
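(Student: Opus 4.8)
The plan is to mimic the proof of Theorem \ref{cor:p2dim}, counting triples $(i,j,h)$ with $p_{ij}^h\neq 0$, using the fact (from Theorems \ref{thm:tanaka} and \ref{thm:acclassify}) that $T(G)$ is AC and triply regular, so that $\dim T(G)=|\{(i,j,h)\colon p_{ij}^h\neq 0\}|$. The key structural input is Proposition \ref{lem:pcamclass}: the conjugacy classes of $G$ are the singletons inside $Z(G)=\gamma_3(G)$, the cosets of $Z(G)$ inside $G'=\gamma_2(G)$ (there are $p^n-1$ nontrivial ones), and the cosets of $G'$ inside $G$ (there are $p^{2n}-1$ nontrivial ones). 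So the total number of classes is $d+1 = p^k + (p^n-1) + (p^{2n}-1) = p^k+p^n+p^{2n}-2$.

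First I would handle the ``generic'' count via the AC property: for every pair of distinct classes $C_i\neq C_h$ there is a unique $j$ with $p_{ij}^h\neq 0$, contributing exactly $(d+1)(d)$ triples with $i\neq h$. Then I would separately count, for each class $C_i$, the number of $j$ with $p_{ii}^j\neq 0$ — equivalently (via $\overline{C_i}\,\overline{C_i^*}$, or directly) the number of $C_j$ with $C_i\subseteq C_iC_j$, i.e. the number of $C_j$ meeting $C_i^{-1}C_i$. This splits into three cases according to the ``level'' of $C_i$. If $C_i=\{g\}\subseteq Z(G)$, then $C_i^{-1}C_i=\{e\}$, so only $j=0$ works: $p^k$ such triples. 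If $C_i=gZ(G)\subseteq G'$ with $g\notin Z(G)$, then $C_i^{-1}C_i = Z(G)$, which is a union of the $p^k$ central classes, so $p^k$ values of $j$ work; there are $p^n-1$ such $C_i$, giving $p^k(p^n-1)$ triples. If $C_i=gG'$ with $g\notin G'$, then $C_i^{-1}C_i=G'$, which is a union of $C_0$ together with the $p^n-1$ classes of the form $xZ(G)\subseteq G'$, i.e. $p^n$ classes; there are $p^{2n}-1$ such $C_i$, giving $p^n(p^{2n}-1)$ triples. (In each case I should double-check using Proposition \ref{lem:pcamclass} that $C_i^{-1}C_i$ is exactly the stated subgroup and is a union of full classes, which is automatic since it is normal and closed under inversion but I'd want to confirm the class decomposition.)

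Summing, $\dim T(G) = (d+1)d + p^k + p^k(p^n-1) + p^n(p^{2n}-1)$ with $d+1 = p^k+p^n+p^{2n}-2$. The remaining work is the algebra: expand $(p^k+p^n+p^{2n}-2)(p^k+p^n+p^{2n}-3)$, add $p^k + p^{k+n} - p^k + p^{3n} - p^n = p^{k+n}+p^{3n}-p^n$, and collect terms, checking it equals
\[3p^{k+n}+3p^{k+2n}+p^{2k}-6p^k+p^{4n}+3p^{3n}-5p^{2n}-6p^n+7.\]
This is a finite, mechanical verification; I would present it as a single displayed computation rather than narrating each monomial.

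The main obstacle I anticipate is not the generic count but the $i=h$ cases: specifically, being sure that ``$p_{ij}^j$'' type terms and the structure of $\overline{C_i}\,\overline{C_j}$ are correctly read off when $C_i$ is a coset, since one must verify that $C_iC_j$ either avoids $C_i$ entirely or contains it with the right multiplicity, and that no triple is double-counted between the $i\neq h$ block and the $i=h$ block. A secondary subtlety is that the hypothesis forces $n$ even (Proposition \ref{thm:Camina3}) but the dimension formula does not actually use parity, so I should make clear the formula is stated under the standing hypotheses without needing $n$ even in the count itself. Once the three class-level cases are pinned down exactly as in the class-$2$ proof, the rest is bookkeeping.
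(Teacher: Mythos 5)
Your overall strategy is the right one and matches the paper's (the paper proves the class-$2$ case by exactly this triple count and states that the class-$3$ case follows ``by a similar argument,'' with details deferred to \cite{Bastiandiss}): use Theorems \ref{thm:tanaka} and \ref{thm:acclassify} to get $\dim T(G)=|\{(i,j,h)\colon p_{ij}^h\neq 0\}|$, count $(d+1)d$ triples with $i\neq h$ from the AC uniqueness, and then count the $i=h$ triples level by level using Proposition \ref{lem:pcamclass}. However, there is a genuine error in your third $i=h$ case. When $C_i=gG'$ with $g\notin G'$, you correctly get $C_i^{-1}C_i=G'$, but you then assert that $G'$ is the union of $C_0$ together with the $p^n-1$ classes $xZ(G)\subseteq G'$, i.e.\ $p^n$ classes. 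This forgets that $Z(G)\subseteq G'$ splits into $p^k$ singleton classes, not just $\{e\}$: every central class $C_j=\{z\}$ with $z\neq e$ also satisfies $C_j\subseteq C_i^{-1}C_i$, and indeed $C_iC_j=gG'z=gG'=C_i$, so $p_{ij}^i\neq 0$ for all of these as well. The correct number of classes contained in $G'$ is $p^k+p^n-1$, so this case contributes $(p^k+p^n-1)(p^{2n}-1)$ triples, not $p^n(p^{2n}-1)$; your count is short by $(p^k-1)(p^{2n}-1)$, which is nonzero since $Z(G)\neq\{e\}$.

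Concretely, with $d+1=p^k+p^n+p^{2n}-2$ the correct total is
\[(d+1)d+p^k+p^k(p^n-1)+(p^k+p^n-1)(p^{2n}-1)=p^{2k}+p^{2n}+p^{4n}+3\bigl(p^{k+n}+p^{k+2n}+p^{3n}\bigr)-6\bigl(p^k+p^n+p^{2n}\bigr)+7,\]
which is exactly the stated formula, whereas the sum you propose to verify, $(d+1)d+p^{k+n}+p^{3n}-p^n$, is not; your ``mechanical verification'' step would therefore fail rather than confirm the theorem. Everything else in your outline (the $(d+1)d$ generic count, the $Z(G)$-level and $G'/Z(G)$-level cases, no double counting between the $i\neq h$ and $i=h$ blocks, and the remark that parity of $n$ plays no role in the count) is sound; only the class decomposition of $G'$ needs fixing.
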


Full details of the proof of Theorem \ref{cor:p3dim} can be found in \cite{Bastiandiss}.

\section{Wreath Products and Wedderburn Decompositions}
    We now determine how to write the group association scheme of each of the groups in Theorem \ref{thm:acclassify} as a wreath product of association schemes that are either $1-$class association schemes, or the group association scheme of an abelian group.

    For an  abelian group there is nothing to show, so let $G=\Z_p^r\rtimes Z_{p^r-1}$ for some prime $p$ and $r>0$. We first find the adjacency matrices of $\mathcal{G}(G)$.

    \begin{Lem}\label{lem:frobA1}
    Let $G$ be the Frobenius group $\Z_{p}^r\rtimes \Z_{p^r-1}$ for prime $p$ and $r>0$. Suppose the classes of $G$ are $C_0=\{e\},\ C_1=\Z_{p}^r\setminus \{e\}$, and $C_i=z^{i-1}\Z_p^r$ for $2\leq i\leq p^{r}$ where $z$ is a generator for $\Z_{p^r-1}$ in $G$. Then the $C_i,C_i$ block of $A_1$ is $J_{|C_i|}-I_{|C_i|}$, the $C_0,C_1$ and $C_1,C_0$ blocks are all $1$'s, and for $i,j\geq 1$ the $C_i,C_j$ ($i \neq j$) block of $A_1$ is all $0$'s. 
\end{Lem}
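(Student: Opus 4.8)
The plan is to unwind the definition of the adjacency matrix $A_1$ directly from the Schur-ring construction. Recall that $(A_1)_{xy}=1$ exactly when $yx^{-1}\in C_1=\Z_p^r\setminus\{e\}$, and $0$ otherwise. The rows and columns of $A_1$ are indexed by elements of $G$, grouped into blocks according to which conjugacy class each element lies in; so the $C_i,C_j$ block records, for $x$ ranging over $C_i$ and $y$ over $C_j$, whether $yx^{-1}\in \Z_p^r\setminus\{e\}$. Since $\Z_p^r=G'$ is the Frobenius kernel and is normal in $G$, the condition $yx^{-1}\in G'$ is equivalent to $xG'=yG'$, i.e. $x$ and $y$ lie in the same coset of $G'$. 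So the whole lemma reduces to a bookkeeping statement about which pairs of classes $C_i,C_j$ can contain elements in a common coset of $G'$, together with the extra caveat that we must exclude $y=x$ (which contributes the $-I$ term because $e\notin C_1$).

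First I would record the coset structure of the listed classes: $C_0=\{e\}$ and $C_1=G'\setminus\{e\}$ together make up the single coset $G'$, while for $i\ge 2$ the class $C_i=z^{i-1}G'$ is itself a full coset of $G'$, and these cosets are pairwise distinct for $i\ge 2$ and distinct from $G'$. With this in hand the case analysis is immediate. For $i=j\ge 1$: any $x,y\in C_i$ satisfy $xG'=yG'$ (this coset is $G'$ when $i=1$ and $z^{i-1}G'$ when $i\ge 2$), so $yx^{-1}\in G'$; hence $(A_1)_{xy}=1$ iff $y\ne x$, giving the block $J_{|C_i|}-I_{|C_i|}$. For the $C_0,C_1$ block, $x=e$ and $y\in G'\setminus\{e\}$, so $yx^{-1}=y\in C_1$, giving all $1$'s; symmetrically for the $C_1,C_0$ block (using that $C_1^\ast=C_1$, or just directly $yx^{-1}=x^{-1}\cdot\text{(something)}$—more simply, $x\in C_1$, $y=e$ gives $yx^{-1}=x^{-1}\in G'\setminus\{e\}=C_1$). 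For $i\ne j$ with $i,j\ge 1$: the cosets containing $C_i$ and $C_j$ are distinct (one of them might be $G'$ itself when $i=1$ or $j=1$, but it is still a different coset from the other since $z^{j-1}\notin G'$), so $xG'\ne yG'$, whence $yx^{-1}\notin G'$ and the block is all $0$'s.

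I do not anticipate a genuine obstacle here; this is essentially an unpacking of definitions once one observes that $G'=\Z_p^r$ is normal so that membership in $G'$ is a coset condition. The one point requiring a little care is the off-diagonal blocks where one index is $1$: one must note that the ``extra'' element $e$ has been split off into $C_0$, so $C_1$ is not a full coset, but it nonetheless lies inside the single coset $G'$, which is why the $C_0,C_1$ and $C_1,C_0$ blocks come out all $1$'s rather than $0$'s, and why the $C_1,C_j$ blocks for $j\ge2$ are all $0$'s. A clean way to present all of this uniformly is to introduce the quotient map $\pi\colon G\to G/G'$ and observe $(A_1)_{xy}=1$ iff $\pi(x)=\pi(y)$ and $x\ne y$; the block description then follows by reading off $\pi$ on each class, and I would write the proof in that form.
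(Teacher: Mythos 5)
Your proposal is correct and follows essentially the same route as the paper: a direct block-by-block verification of when $yx^{-1}\in C_1=G'\setminus\{e\}$, using that $C_1$ sits inside the coset $G'$ and that the remaining nontrivial classes are full cosets $z^{i-1}G'$. The only cosmetic difference is that your uniform criterion ``$(A_1)_{xy}=1$ iff $xG'=yG'$ and $x\neq y$'' replaces the paper's commutator identity $hg^{-1}=[x^{-1},g^{-1}]\in G'$ for the diagonal blocks and its contradiction-style case analysis for the off-diagonal blocks, which is a tidy but not substantively different packaging.
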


\begin{proof}
    Recall that $G'=\Z_p^r$. First we consider the $C_i,C_i$ block of $A_1$. Say $g,h\in C_i$. Then $h=xgx^{-1}$, $x\in G$. We have $(A_1)_{gh}=1$ if and only if $hg^{-1}\in G'\setminus\{e\}$. Notice that $hg^{-1}=xgx^{-1}g^{-1}=[x^{-1},g^{-1}]\in G'$. Hence, $(A_1)_{gh}=1$ if and only if $hg^{-1}\neq e$ if and only if $h\neq g$. Therefore, in the $C_i,C_i$ block $(A_1)_{gh}=1$ for all non-diagonal entries and $0$ for the diagonal entries. Thus, the $C_i,C_i$ block of $A_1$ is $J_{|C_i|}-I_{|C_i|}$. 

    Now consider the $C_0,C_1$ and $C_1,C_0$ blocks. For $g\in C_1$, $(A_1)_{eg}=1$. Also as $C_1=G'\setminus \{e\}$ with $g\in C_1$, we have $g^{-1}\in C_1$. Then $(A_1)_{ge}=1$, so the $C_0,C_1$ and $C_1,C_0$ blocks of $A_1$ are all $1$ blocks.

    Next we consider the $C_i,C_j$ block of $A_1$ where $i\neq j$ and $i,j\geq 1$. Suppose $g\in C_i$ and $h\in C_j$ with $(A_1)_{gh}=1$. Then $hg^{-1}\in G'\setminus\{e\}\subset G'$. As $i\neq j$ with $i,j\geq 1$ at least one of $i$ and $j$ is greater than $1$. First suppose $i>1$. Then $C_i=z^{i-1}G'$ and $hg^{-1}\in hG'z^{1-i}=hz^{1-i}G'$. However, $hg^{-1}\in G'$ by assumption. Then $hz^{1-i}G'=G'$, so $hz^{1-i}=k$ for some $k\in G'$. This implies $h=kz^{i-1}$. So $h\in G'z^{i-1}=z^{i-1}G'=C_i$. However, $h\in C_j\neq C_i$, a contradiction. Next suppose, $j>1$. Then $C_j=z^{j-1}G'$, so $hg^{-1}\in z^{j-1}G'g^{-1}=z^{j-1}g^{-1}G'$. However, $hg^{-1}\in G'$ by assumption, so $z^{j-1}g^{-1}G'=G'$. This is only true if $z^{j-1}g^{-1}=t\in G'$, so $g\in z^{j-1}G'=C_j$. However, $g\in C_i\neq C_j$, a contradiction. Therefore, $(A_1)_{gh}\neq 1$ for $g\in C_i$, $h\in C_j$. So, for $i,j\geq 1$ the $C_i,C_j$ ($i \neq j$) block of $A_1$ is all $0$'s.   
\end{proof}

\begin{Lem}\label{lem:frobA2}
     Let $G$ be the Frobenius group $\Z_{p}^r\rtimes \Z_{p^r-1}$. Suppose the classes of $G$ are $C_0=\{e\},\ C_1=\Z_{p}^r\setminus \{e\}$, and $C_i=z^{i-1}\Z_p^r$ for $2\leq i\leq p^{r}$, $\Z_{p^{r}-1}=\langle z\rangle$. Then for $i\geq 2$, the $C_u,C_v$ block of $A_i$ is either all $0$'s or all $1$'s. Moreover, the $C_u,C_v$ block of $A_i$ is all $1$'s if and only if $C_vC_{u}^*=C_i$. 
\end{Lem}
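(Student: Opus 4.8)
The plan is to push everything down to the abelian quotient $G/G'$, where $G'=\Z_p^r$. Let $\pi\colon G\to G/G'$ be the canonical projection. The starting observation is that $\pi(g)$ depends only on the conjugacy class of $g$: if $g'=hgh^{-1}$ then $\pi(g')=\pi(h)\pi(g)\pi(h)^{-1}=\pi(g)$ because $G/G'$ is abelian. Recall also that $(A_i)_{xy}=1$ exactly when $yx^{-1}\in C_i$, and that for $i\ge 2$ the class $C_i=z^{i-1}G'$ is a full coset of the normal subgroup $G'$ not containing $e$, which moreover is a single conjugacy class.

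First I would prove that the $C_u,C_v$ block of $A_i$ is constant. Fix $i\ge 2$ and take $x\in C_u$, $y\in C_v$. Since $C_i=z^{i-1}G'$ is a coset of $G'\normal G$, we have $yx^{-1}\in C_i$ if and only if $\pi(y)\pi(x)^{-1}=\pi(z^{i-1})$ in $G/G'$. By the opening remark the left-hand side depends only on $C_u$ and $C_v$, not on the chosen representatives; hence $(A_i)_{xy}$ is the same for every $(x,y)$ in the block. So the block is all $1$'s when $\pi(y)\pi(x)^{-1}=\pi(z^{i-1})$ and all $0$'s otherwise. This is precisely where $i\ge 2$ is used: when $i\in\{0,1\}$ the set $C_i$ lies in the identity coset $G'$, so the condition $yx^{-1}\in C_i$ also records whether $y=x$, which is not a function of the blocks (indeed the $C_1,C_1$ block of $A_1$ is $J-I$, by Lemma \ref{lem:frobA1}).

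For the ``moreover'' statement, one direction is immediate: if $C_vC_u^*=C_i$ then $yx^{-1}\in C_vC_u^*=C_i$ for all $x\in C_u$ and $y\in C_v$, so the block is all $1$'s. Conversely, suppose the block is all $1$'s. Then $yx^{-1}\in C_i$ for every $x\in C_u$, $y\in C_v$, i.e.\ $C_vC_u^*\subseteq C_i$. To upgrade this to equality I would observe that $C_vC_u^*$ is closed under conjugation: $C_v$ and $C_u^*$ are each single conjugacy classes ($C_u^*$ is the class of $g^{-1}$ for any $g\in C_u$), and conjugation by any $h\in G$ permutes each of them while distributing over the product, so $h(C_vC_u^*)h^{-1}=C_vC_u^*$. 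Thus $C_vC_u^*$ is a nonempty union of conjugacy classes contained in the single conjugacy class $C_i$, and since distinct conjugacy classes are disjoint this forces $C_vC_u^*=C_i$.

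The lemma is mostly bookkeeping once the reduction to $G/G'$ is set up, so I do not anticipate a genuine obstacle; the two points needing care are (a) identifying why $i\ge 2$ is exactly the right hypothesis (the role of $e\notin C_i$), handled above, and (b) turning the containment $C_vC_u^*\subseteq C_i$ into an equality. For (b) the conjugation-invariance argument is the cleanest route; alternatively one could split into cases according to whether each of $C_u$, $C_v$ is $\{e\}$, $G'\setminus\{e\}$, or a nonidentity coset, and compute the product set directly using that the product of two cosets of $G'$ is again a coset of $G'$ — but that is more tedious, and the conjugation argument avoids it.
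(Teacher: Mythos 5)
Your proof is correct, but it follows a genuinely different route from the paper's. The paper splits into two cases: for $C_u=C_v$ it observes that the diagonal blocks are already accounted for by $A_0$ and by the $J-I$ blocks of $A_1$ from Lemma \ref{lem:frobA1}, so $A_i$ must vanish there; for $C_u\neq C_v$ it invokes Theorem \ref{thm:groupequiv} (Dade--Yadav) to write $(y)^G(x^{-1})^G=(yx^{-1})^G$, which gives both the constancy of the block and the equivalence with $C_vC_u^*=C_i$ in one stroke. You instead never use Lemma \ref{lem:frobA1} or Theorem \ref{thm:groupequiv}: you use that for $i\geq 2$ the class $C_i$ is a full coset of $G'=\Z_p^r$, so membership of $yx^{-1}$ in $C_i$ is detected in the abelian quotient $G/G'$, where the image of an element depends only on its conjugacy class --- this gives block constancy uniformly, with no case split on $u=v$ versus $u\neq v$; you then upgrade the containment $C_vC_u^*\subseteq C_i$ to equality by noting that a product of conjugacy classes is conjugation-invariant, hence a nonempty union of classes sitting inside the single class $C_i$, forcing equality. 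Both arguments are sound; the paper's is shorter in context because Theorem \ref{thm:groupequiv} is already the engine of the whole classification, while yours is self-contained (it needs only the Camina-type description of the classes as cosets of $G'$), and is closer in spirit to the paper's Lemma \ref{lem:cam2outer}, so it would transfer verbatim to any class that is a full coset of a normal subgroup with the quotient abelian.
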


\begin{proof}
     Recall $G'=\Z_p^r$. Since $i\geq 2$, $C_i=z^{i-1}G'$.
     First consider $u=v$. Let $x,y\in C_u$. If $x=y$, then $(A_0)_{xy}=1$. If $x\neq y$, then $(A_1)_{xy}=1$ by Lemma \ref{lem:frobA1}. As no two adjacency matrices are nonzero in the same position, $(A_i)_{xy}=0$ for all $x,y\in C_u$. Hence, the $C_u,C_u$ block of $A_i$ is all $0$'s. Now we assume $C_u\neq C_v$. Let $x\in C_u$, $y\in C_v$. We have $(A_i)_{xy}=1$ if and only if $yx^{-1}\in z^{i-1}G'$. However, as $C_u\neq C_v$, by Theorem \ref{thm:groupequiv}, $(y)^G(x^{-1})^G=(yx^{-1})^G$. Therefore, $yx^{-1}\in C_i$ if and only if $C_vC_{u}^*=C_i$. As we chose $x\in C_u$ and $y\in C_v$ arbitrarily, $(A_i)_{xy}=1$ if and only if $C_vC_u^*=C_i$ for all $x\in C_u$, $y\in C_v$. Therefore, the $C_u,C_v$ block of $A_i$ is all $1$'s if and only if $C_vC_u^*=C_i$. 
\end{proof}

Now we can express the matrices as Kronecker products:

\begin{Thm}\label{thm:frobA}
    Let $G$ be the Frobenius group $\Z_{p}^r\rtimes \Z_{p^r-1}$ for some prime $p$ and $r>0$. Suppose the classes of $G$ are $C_0=\{e\},\ C_1=\Z_{p}^r\setminus \{e\}$, and $C_i=z^{i-1}\Z_p^r$ for $2\leq i\leq p^{r}$ where $z$ is a generator for $\Z_{p^r-1}$ in $G$. Then the adjacency matrices of the group association scheme of $G$ are of the form $A_0=I_{p^{r}-1}\otimes I_{p^r},\ A_1=I_{p^r-1}\otimes (J_{p^r}-I_{p^r})$, and for $i\geq 2$, $A_i=Q_{p^r-1}^{i-1}\otimes J_{p^r}$ where 
    \begin{equation}\label{eq:wr1}
    Q_{p^r-1}=\begin{pmatrix}
        0 & 1 & 0 & \cdots & 0 \\
        0 & 0 & 1 & \cdots & 0 \\
        \vdots & \vdots & \ddots & \ddots & \vdots \\
        0 & 0 & 0 & \cdots & 1 \\
        1 & 0 & 0 & \cdots & 0
    \end{pmatrix}.
    \end{equation}
\end{Thm}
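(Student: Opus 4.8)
The plan is to index the elements of $G$ so that the group $\Z_{p^r-1}=\langle z\rangle$ acts as the ``outer'' coordinate and $G'=\Z_p^r$ as the ``inner'' coordinate. Concretely, we write every $g\in G$ uniquely as $g=z^a w$ with $0\le a\le p^r-2$ and $w\in G'$, and we order the rows and columns of the adjacency matrices first by the block index $a$ (giving $p^r-1$ blocks, one for each coset $z^a G'$) and then, within each block, by the element $w\in G'$. With this ordering, the conjugacy class $C_1=G'\setminus\{e\}$ and the class $C_0=\{e\}$ both live inside the $a=0$ block, while for $i\ge 2$ the class $C_i=z^{i-1}G'$ is exactly the block with index $a=i-1$. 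Thus a matrix that is constant on each $C_u,C_v$ block is, in this ordering, a $(p^r-1)\times(p^r-1)$ array of blocks each of which is either $0_{p^r}$ or $J_{p^r}$; equivalently it has the form $M\otimes J_{p^r}$ for a $0/1$ matrix $M$ of size $p^r-1$.

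First I would read off $A_0$ and $A_1$ directly from Lemma~\ref{lem:frobA1}: $A_0=I$ is block-diagonal with each diagonal block $I_{p^r}$, so $A_0=I_{p^r-1}\otimes I_{p^r}$; and $A_1$, by Lemma~\ref{lem:frobA1}, has $C_i,C_i$ block equal to $J_{p^r}-I_{p^r}$ for $i\ge 2$, has $C_0,C_0$ block $0$ (since $e\notin C_1$, the diagonal entry is $0$ and there are no off-diagonal entries in a $1\times1$ block), has $C_1,C_1$ block $J-I$ of the appropriate size, and has all off-diagonal $C_i,C_j$ blocks zero \emph{except} the $C_0,C_1$ and $C_1,C_0$ blocks, which are all $1$'s. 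The one subtlety is that $C_0$ and $C_1$ together make up the single coset block $z^0G'=G'$: the $C_0,C_1$, $C_1,C_0$, $C_0,C_0$, $C_1,C_1$ sub-blocks reassemble into the single $p^r\times p^r$ block indexed by $a=0$, and on that block $A_1$ restricts to $(\text{all }1)-I_{p^r}=J_{p^r}-I_{p^r}$ (the diagonal is killed, everything else is $1$). Hence the $a=0$ block of $A_1$ is $J_{p^r}-I_{p^r}$, matching every other diagonal block, and all off-diagonal blocks vanish, giving $A_1=I_{p^r-1}\otimes(J_{p^r}-I_{p^r})$.

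Next, for $i\ge 2$ I would invoke Lemma~\ref{lem:frobA2}: the $C_u,C_v$ block of $A_i$ is all $1$'s precisely when $C_vC_u^*=C_i$, and all $0$'s otherwise, so $A_i=M_i\otimes J_{p^r}$ where $(M_i)_{vu}=1$ iff $C_vC_u^*=C_i$ (again merging the $C_0,C_1$ labels into the single coset index, which is consistent because $C_1^*=C_1$, $C_0^*=C_0$, and one checks the four sub-blocks behave uniformly). It then remains to identify $M_i$ with $Q_{p^r-1}^{\,i-1}$. For this I use that $G/G'\cong\Z_{p^r-1}$ is cyclic generated by the image of $z$, and that for cosets the product of classes is the class of the product of cosets: $C_v C_u^* = z^{b}G'\cdot(z^{a}G')^{-1}=z^{\,b-a}G'$ when we write $C_v$ as the block $b$ and $C_u$ as the block $a$, using the convention that block index $0$ corresponds to the coset $G'$ (which carries the two classes $C_0,C_1$) and block index $c$ corresponds to $z^cG'=C_{c+1}$ for $1\le c\le p^r-2$. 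Thus $C_vC_u^*=C_i$ (for $i\ge2$, i.e. coset $z^{i-1}G'$) forces $b-a\equiv i-1\pmod{p^r-1}$, so $(M_i)_{vu}=1$ iff $b\equiv a+(i-1)$, i.e. $M_i$ is the permutation matrix of the $(i-1)$-fold cyclic shift on $\Z_{p^r-1}$, which is exactly $Q_{p^r-1}^{\,i-1}$ as displayed in \eqref{eq:wr1}.

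The main obstacle — really the only place care is needed — is the bookkeeping around the coset $z^0G'=G'$, which is split into the two conjugacy classes $C_0=\{e\}$ and $C_1=G'\setminus\{e\}$: one must check that collapsing this pair into a single ``block $0$'' of size $p^r$ is legitimate for every $A_i$ simultaneously, i.e. that $A_1$ restricted there is $J_{p^r}-I_{p^r}$ and that for $i\ge2$ the $C_0,\ast$ and $C_1,\ast$ rows of $A_i$ (and the symmetric columns) agree in the sense needed to make $A_i=Q_{p^r-1}^{i-1}\otimes J_{p^r}$ hold — which they do because $C_0\cup C_1=G'$ is a single coset and $A_i$ with $i\ge2$ detects only the coset of $yx^{-1}$, by Lemma~\ref{lem:frobA2} and Theorem~\ref{thm:groupequiv}. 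Once that is verified the three identities $A_0=I_{p^r-1}\otimes I_{p^r}$, $A_1=I_{p^r-1}\otimes(J_{p^r}-I_{p^r})$, and $A_i=Q_{p^r-1}^{i-1}\otimes J_{p^r}$ for $i\ge2$ are immediate, and one notes in passing that these are precisely the adjacency matrices of the wreath product $\mathcal{K}_{p^r}\wr\mathcal{G}(\Z_{p^r-1})$ (matching the displayed formula $D_0=B_0\otimes A_0$, $D_k=I_n\otimes A_k$, $D_{c+k}=B_k\otimes J_m$ from the definition of wreath product, with $\mathcal{K}_{p^r}$ having adjacency matrices $I_{p^r},J_{p^r}-I_{p^r}$ and $\mathcal{G}(\Z_{p^r-1})$ having adjacency matrices $I_{p^r-1},Q_{p^r-1},\dots,Q_{p^r-1}^{p^r-2}$).
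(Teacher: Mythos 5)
Your argument is correct and essentially the paper's own: both proofs re-block the matrices by the cosets of $G'=\Z_p^r$, read the block structure off Lemmas \ref{lem:frobA1} and \ref{lem:frobA2}, check that merging $C_0$ and $C_1$ into the single coset block $G'$ is harmless because $A_i$ for $i\geq 2$ only detects the coset of $yx^{-1}$, and then identify the resulting block-level matrix with the cyclic-shift power $Q_{p^r-1}^{i-1}$. Two small repairs: the vanishing of the $C_0,C_j$ and $C_j,C_0$ blocks of $A_1$ for $j\geq 2$ is not literally part of Lemma \ref{lem:frobA1} (it is immediate from the definition of $A_1$, or, as the paper argues, from Lemma \ref{lem:frobA2} together with the disjointness of the supports of the adjacency matrices), and your indexing ``$(M_i)_{vu}=1$ iff $C_vC_u^*=C_i$'' is transposed relative to the row-$u$, column-$v$ block convention --- since $(A_i)_{xy}=1$ iff $yx^{-1}\in C_i$, the nonzero block sits in row block $a$ and column block $a+i-1$, so the correct identification is $Q_{p^r-1}^{i-1}$ (your final answer) rather than the transpose that your literal indices would give.
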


\begin{proof}
    The result is clear for $A_0=I_{|G|}=I_{(p^r-1)p^r}$. From Lemma \ref{lem:frobA1}, the $C_i,C_i$ block of $A_1$ is $J_{|C_i|}-I_{|C_i|}$ for all $i$; the $C_0,C_1$ and $C_1,C_0$ blocks are all $1$'s, and for $i,j\geq 1$ with $i\neq j$ the $C_i,C_j$ block is all $0$'s. We shall now consider the blocks of $A_1$ as the cosets of $\Z_p^r$, written as $z^i\Z_p^r$. Note the only difference between this and using conjugacy classes is that $C_0$ and $C_1$ are combined into one block. We also are not reordering the elements. So for all $i\geq 1$ the $z^i\Z_p^r,z^i\Z_p^r$ block of $A_1$ is $J_{p^r}-I_{p^r}$. Also the $z^i\Z_p^r,z^j\Z_p^r$ block is all $0$ for $i\neq j$ with $i,j\geq 1$. Consider the $\Z_p^r,\Z_p^r$ block. For $g\in \Z_p^r$, $(A_1)_{gg}=0$ since $(A_0)_{gg}=1$. Now for $g,h\in \Z_p^r$ with $g\neq h$, at least one of $g$ and $h$ is not the identity. So either $(a)$ $g\in C_0,\ h\in C_1$, $(b)$ $g\in C_1,\ h\in C_0$, or $(c)$ $g,h\in C_1$. In all three cases $(A_1)_{gh}=1$ as the entry $(g,h)$ is either in the $C_0,C_1$ block, the $C_1,C_0$ block, or the $C_1,C_1$ block. Hence, the $\Z_p^r,\Z_p^r$ block of $A_1$ is all $1$ off the main diagonal and $0$ on the main diagonal. So the $\Z_p^r,\Z_p^r$ block of $A_1$ is $J_{p^r}-I_{p^r}$. For the $\Z_p^r,z^i\Z_p^r$ block, $i\geq 1$, $C_{i+1}=z^i\Z_p^r$, $C_{i+1} C_0^*=C_{i+1}$, and $C_{i+1} C_1^*=C_{i+1}C_1=C_i$. Then by Lemma \ref{lem:frobA2}, $A_{i+1}$ is all $1$'s in the $C_0,C_{i+1}$ and $C_1,C_{i+1}$ block. Since no two adjacency matrices are nonzero in the same position $A_1$ is all $0$'s in the $C_0,C_{i+1}$ and $C_1,C_{i+1}$ blocks. Then the $\Z_p^r,z^i\Z_p^r$ block of $A_1$ is all $0$. For the $z^i\Z_p^r,\Z_p^r$ block, $i\geq 1$, $C_{i+1}^*=z^{p^r-i-2}\Z_p^r$. We have $C_{0} C_{i+1}^*=C_{p^r-i-2}$ and $C_{1} C_{i+1}^*=C_{p^r-i-2}$. By Lemma \ref{lem:frobA2}, $A_{p^r-i-2}$ is all $1$'s in the $C_{i+1},C_{0}$ and $C_{i+1},C_{1}$ block. Since no two adjacency matrices are nonzero in the same position, $A_1$ is all $0$'s in the $C_{i+1},C_{0}$ and $C_{i+1},C_{1}$ blocks. Then the $z^i\Z_p^r,\Z_p^r$ block of $A_1$ is all $0$. Therefore, $A_1$ is a matrix in which the $z^i\Z_p^r,z^i\Z_p^r$ block for all $i$ is $J_{p^r}-I_{p^r}$ and the $z^i\Z_p^r,z^j\Z_p^r$ for $i\neq j$ block is all $0$. There are a total of $p^r-1$ cosets of $\Z_p^r$ in $G$. Hence, $A_1=I_{p^r-1}\otimes (J_{p^r}-I_{p^r})$.

    For $A_i$, $i\geq 2$, we consider the blocks of $A_i$ as the cosets of $\Z_p^r$. We saw when looking at $A_1$ that the $\Z_p^r,z^{i-1}\Z_p^r$ block of $A_i$ is all $1$'s. For any $j\neq i-1$,  $A_{j+1}$ is all $1$'s in the $\Z_p^r,z^j\Z_p^r$ block. Hence, $A_i$ must be all $0$'s in the $\Z_p^r,z^j\Z_p^r$ block for all $j\neq i-1$. The $z^{p^r-i-2}\Z_p^r,\Z_p^r$ block of $A_i$ is all $1$'s (working module $p^r-1$). Also for any $j\neq p^r-i-2$, $A_{p^r-j-2}$ is all $1's$ in the $z^{j}\Z_p^r,\Z_p^r$ block. Hence, $A_i$ must be all $0$'s in the $z^j\Z_p^r,\Z_p^r$ block for $j\neq p^r-i-2$. We now consider the $z^j\Z_p^r,z^k\Z_p^r$ block, $j,k\geq 1$. From Lemma \ref{lem:frobA2}, the $z^j\Z_p^r,z^k\Z_p^r$ block of $A_i$ is all $1$'s if and only if $z^k\Z_p^r\cdot z^{-j}\Z_p^r=z^{i-1}\Z_p^r$ and is all $0$'s otherwise. Hence, the $z^j\Z_p^r,z^k\Z_p^r$ block of $A_i$ is all $1$'s if and only if $z^{k-j}\Z_p^r=z^{i-1}\Z_p^r$. These two cosets are equal if and only if $z^{k-j}=z^{i-1}g$ for some $g\in \Z_p^r$. So $z^{k-j-i+1}=g\in \Z_p^r$. However, the only power of $z$ that is in $\Z_p^r$ is $z^0=e$. Hence, the $z^j\Z_p^r,z^k\Z_p^r$ block of $A_i$ is all $1$'s if and only if $k-j-i+1=0$. That is, if $i-1\equiv k-j \mod p^{r-1}$. Given any fixed $z^j\Z_p^r$ with $j\geq 1$ row block there is then a unique column block $z^{j+i-1}\Z_p^r$ such that the $z^j\Z_p^r,z^{j+i-1}\Z_p^r$ block of $A_i$ is all $1$'s. When $j=0$ the $z^j\Z_p^r,z^{j+i-1}\Z_p^r$ block is just the $\Z_p^r,z^{i-1}\Z_p^r$ block, which is the unique block in the $\Z_p^r$ row block where $A_i$ is nonzero. When $j=p^r-i-2$ the $z^j\Z_p^r,z^{j+i-1}\Z_p^r$ block is just the $z^{p^r-i-2}\Z_p^r,z^{p^r-1}\Z_p^r$ block, which is the same as the $z^{p^r-i-2}\Z_p^r,\Z_p^r$ block. This is the unique block in the $\Z_p^r$ column block where $A_i$ is nonzero. Combining all of this we have for all $j$ that the $z^j\Z_p^r,z^{i+j-1}\Z_p^r$ block of $A_i$ is the all $1$'s matrix and all other blocks are $0$. Thus $A_i=B\otimes J_{p^r}$ where $B$ is the $p^r-1\times p^r-1$ matrix that is nonzero in the $j,i+j-1 \mod p^r-1$ entry for all $j$ and $0$ elsewhere. Namely the nonzero entries of $B$ are $(1,i)$, $(2,i+1)$, $(3,i+2)$, $\cdots,$ $(p^{r}-1,i-1)$. Directly computing powers of $Q_{p^r-1}$ we see that $Q_{p^r-1}^i$ is nonzero in entries $(1,i+1), (2,i+2),\cdots, (p^r-1,i)$. Hence, $B=Q_{p^r-1}^{i-1}$. So for $i\geq 2$, $A_i=Q_{p^r-1}^{i-1}\otimes J_{p^r}$ as claimed. 
\end{proof}

\begin{Cor}\label{cor:frobwreath}
     Let $G$ be the Frobenius group $\Z_{p}^r\rtimes \Z_{p^r-1}$. Then $\mathcal{G}(G)=\mathcal{K}_{p^r}\wr \mathcal{G}(\Z_{p^{r}-1})$. 
\end{Cor}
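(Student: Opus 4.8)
The plan is to read the adjacency matrices of $\mathcal{K}_{p^r}\wr\mathcal{G}(\Z_{p^r-1})$ straight off the definition of the wreath product and check that they agree, matrix for matrix, with the adjacency matrices of $\mathcal{G}(G)$ computed in Theorem~\ref{thm:frobA}. First I would record the adjacency matrices of the two factors. The trivial scheme $\mathcal{K}_{p^r}$ has adjacency matrices $A_0=I_{p^r}$ and $A_1=J_{p^r}-I_{p^r}$, so in the notation of the wreath product definition we have $c=1$ and $m=p^r$. Since $\Z_{p^r-1}$ is abelian, each of its conjugacy classes is a singleton, so $\mathcal{G}(\Z_{p^r-1})$ has $p^r-1$ classes; ordering $\Z_{p^r-1}$ as $e,z,z^2,\dots,z^{p^r-2}$ for a generator $z$, the adjacency matrix of the class $\{z^j\}$ is the permutation matrix of multiplication by $z^j$, which is precisely $Q_{p^r-1}^{\,j}$ for the matrix $Q_{p^r-1}$ of \eqref{eq:wr1} (in particular $B_0=Q_{p^r-1}^{\,0}=I_{p^r-1}$). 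Thus $B_j=Q_{p^r-1}^{\,j}$ for $0\le j\le p^r-2$, with $n=p^r-1$ and $d=p^r-2$.

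Next I would substitute these into the wreath product formula. With the identifications above, $\mathcal{K}_{p^r}\wr\mathcal{G}(\Z_{p^r-1})$ is a scheme on a set of size $(p^r-1)p^r=|G|$ whose adjacency matrices are $D_0=B_0\otimes A_0=I_{p^r-1}\otimes I_{p^r}$, the single matrix $D_1=I_{p^r-1}\otimes A_1=I_{p^r-1}\otimes(J_{p^r}-I_{p^r})$ coming from $1\le k\le c=1$, and $D_{1+k}=B_k\otimes J_{p^r}=Q_{p^r-1}^{\,k}\otimes J_{p^r}$ for $1\le k\le p^r-2$. Reindexing the last family by $i=k+1$, this is exactly the list $A_0=I_{p^r-1}\otimes I_{p^r}$, $A_1=I_{p^r-1}\otimes(J_{p^r}-I_{p^r})$, $A_i=Q_{p^r-1}^{\,i-1}\otimes J_{p^r}$ for $2\le i\le p^r$ from Theorem~\ref{thm:frobA}. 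As an association scheme is determined by its set of adjacency matrices, it follows that $\mathcal{G}(G)=\mathcal{K}_{p^r}\wr\mathcal{G}(\Z_{p^r-1})$.

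The only point requiring care --- and the one place a reader might object --- is the bookkeeping of orderings. Theorem~\ref{thm:frobA} fixes a specific ordering of the vertex set $G$: the vertices are grouped into the $p^r-1$ cosets $z^i\Z_p^r$ ordered by the exponent $i$, with a fixed ordering of $\Z_p^r$ inside each coset. One must observe that this is exactly the ordering underlying the wreath product $\mathcal{K}_{p^r}\wr\mathcal{G}(\Z_{p^r-1})$, in which the coarse (block) structure is indexed by $\Z_{p^r-1}$ and the structure within each block by the $p^r$-point set of $\mathcal{K}_{p^r}$, and that the cyclic ordering of $\Z_{p^r-1}$ used above (so that $B_j=Q_{p^r-1}^{\,j}$) is the one induced by the coset labels. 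Both of these are immediate from the set-up already in force in the proof of Theorem~\ref{thm:frobA}, so there is no genuine computation left: the corollary is just a restatement of that theorem in the language of wreath products.
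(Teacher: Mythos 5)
Your proof is correct and follows essentially the same route as the paper: both read off the adjacency matrices of $\mathcal{K}_{p^r}\wr\mathcal{G}(\Z_{p^r-1})$ from the wreath product definition, identify the adjacency matrix of the class $\{z^j\}$ in $\mathcal{G}(\Z_{p^r-1})$ with $Q_{p^r-1}^{\,j}$ under the ordering $z^0,z^1,\dots,z^{p^r-2}$, and then match the resulting list, after reindexing, with the matrices computed in Theorem~\ref{thm:frobA}. Your explicit remark about the vertex ordering (cosets as outer blocks, $\Z_p^r$ inside each block) is a point the paper leaves implicit, and is handled correctly.
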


\begin{proof}
    The adjacency matrices of $K_{p^r}\wr \mathcal{G}(\Z_{p^{r}-1})$ are just $D_0=I_{p^r(p^r-1)}$, $D_1=I_{p^r-1}\otimes (J_{p^r}-I_{p^r})$, and $D_{i+1}=B_{i}\otimes J_{p^r}$ for $1\leq i\leq p^{r}-1$, where the $B_{i}$ are the adjacency matrices for $\mathcal{G}(\Z_{p^r-1})$. We show that $B_{i-1}=Q_{p^r-1}^{i-1}$ for each $i\geq 2$ where $Q_{p^r-1}$ is the matrix in (\ref{eq:wr1}). Let $\Z_{p^r-1}=\langle z\rangle$. We note
    \[(B_i)_{xy}=\left\{\begin{array}{cc}
        1 & \text{ if }yx^{-1}=z^i \\
        0 & \text{ otherwise }
    \end{array}\right..\]
    Ordering the elements of $\Z_{p^r-1}$ as $z^0,z^1,z^2,\cdots, z^{p^r-2}$ the nonzero entries of $B_{i}$ are $(1,i+1),\ (2,i+2),\ \cdots, (p^r-1,i)$. These are the nonzero entries of $Q_{p^r-1}^{i}$, so $B_i=Q_{p^r-1}^{i-1}$. Hence, the adjacency matrices of $K_{p^r}\wr \mathcal{G}(\Z_{p^{r}-1})$ are just $D_0=I_{p^r(p^r-1)}$, $D_1=I_{p^r-1}\otimes (J_{p^r}-I_{p^r})$, and $D_{i+1}=Q_{p^r-1}^{i}\otimes J_{p^r}$ for $1\leq i\leq p^r-1$. Shifting the index on $D_{i+1}$ for $1\leq i\leq p^r-1$ the adjacency matrices of $K_{p^r}\wr \mathcal{G}(\Z_{p^{r}-1})$ are $D_0=I_{p^{r}-1}\otimes I_{p^r}, D_1=I_{p^r-1}\otimes (J_{p^r}-I_{p^r})$, and for $i\geq 2$, $D_i=Q_{p^r-1}^{i-1}\otimes J_{p^r}$. From Theorem \ref{thm:frobA}, these are exactly the adjacency matrices for $\mathcal{G}(G)$. Hence, $\mathcal{G}(G)=K_{p^r}\wr \mathcal{G}(\Z_{p^{r}-1})$.
\end{proof}

For a Camina $p-$group $G$, the wreath product will depend on the nilpotency class of $G$. We begin by finding the adjacency matrices for $\mathcal{G}(G)$.

\begin{Lem}\label{lem:cam2outer}
    Let $G$ be a Camina $p-$group and $C_k\not\subseteq G'$. Then for classes $C_i,C_j$ of $G$ the $C_i,C_j$ block of $A_k$ is all $0$'s if $C_j\not\subseteq C_iC_k$ and all $1$'s if $C_j\subseteq C_iC_k$.
\end{Lem}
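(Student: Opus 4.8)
The plan is to compute the $(x,y)$ entry of $A_k$ directly for $x\in C_i$, $y\in C_j$, show it does not depend on the choices of $x$ and $y$, and then identify its value. Recall from the Schur-ring construction that $(A_k)_{xy}=1$ exactly when $yx^{-1}\in C_k$. The structural input I will use is twofold: since $G$ is a Camina group and $C_k\not\subseteq G'$, the class $C_k$ is a full coset of $G'$, so in particular $C_k\cap G'=\emptyset$; and for any two classes $C_i,C_k$ the product set $C_iC_k$ is invariant under conjugation and satisfies $C_iC_k=C_kC_i$ (from $ab=b(b^{-1}ab)$ with $b^{-1}ab\in C_i$).

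I would split into two cases. If $C_i=C_j$, then any $x,y\in C_i$ are conjugate, hence congruent modulo $G'$ since $G/G'$ is abelian, so $yx^{-1}\in G'$ and therefore $yx^{-1}\notin C_k$; thus the $C_i,C_j$ block of $A_k$ is all $0$'s. The same congruence shows $C_j\not\subseteq C_iC_k$: if $y=ab$ with $a\in C_i=C_j$ and $b\in C_k$, then $b=a^{-1}y\in G'$, contradicting $C_k\cap G'=\emptyset$. So the statement holds in this case.

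The main case is $C_i\neq C_j$. Here I invoke Theorem~\ref{thm:groupequiv}: a Camina $p$-group is one of the groups listed there, so it has the property that $u^Gv^G=(uv)^G$ whenever $u^G\neq(v^{-1})^G$. Applying this with $u=y\in C_j$ and $v=x^{-1}$ --- the hypothesis $u^G\neq(v^{-1})^G$ being precisely $C_j\neq C_i$ --- shows that $(yx^{-1})^G=C_jC_i^{*}$ is a single conjugacy class, independent of the choices of $x\in C_i$ and $y\in C_j$. Hence $(A_k)_{xy}$ takes the same value for all such $x,y$, so the block is all $0$'s or all $1$'s, and it is all $1$'s precisely when $C_jC_i^{*}=C_k$. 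To match the stated form of the lemma I then translate this condition: if $yx^{-1}\in C_k$ then $y=(yx^{-1})x\in C_kC_i=C_iC_k$, so $C_j\subseteq C_iC_k$ by conjugation-invariance; conversely, if $C_j\subseteq C_iC_k$, write a chosen $y\in C_j$ as $ab$ with $a\in C_i$ and $b\in C_k$, and observe $ya^{-1}=aba^{-1}\in C_k$, so the $(a,y)$ entry is $1$ and by constancy the whole block is all $1$'s.

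I expect the only genuine obstacle to be the constancy of the block, which rests entirely on Theorem~\ref{thm:groupequiv} supplying that $C_jC_i^{*}$ is a single class when $C_i\neq C_j$; this is exactly where the $p$-group hypothesis enters. The degenerate case $C_i=C_j$, where that product is not a single class, must be argued separately but is harmless, since there $yx^{-1}$ always lands in $G'$, from which $C_k\not\subseteq G'$ keeps $C_k$ disjoint.
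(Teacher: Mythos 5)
Your argument is correct, but it takes a different route from the paper's own proof of this lemma. The paper argues directly from the Camina coset structure: since $C_k\not\subseteq G'$, one has $C_k=gG'$, so $(A_k)_{xy}=1$ iff $y\in gG'x=gxG'$; splitting on whether $C_i\subseteq G'$ or $C_i$ is itself a coset $hG'$, the paper identifies $gxG'=C_iC_k$ in both cases, and since a coset of $G'$ is a union of conjugacy classes, $y\in gxG'$ holds iff $C_j\subseteq C_iC_k$ --- the all-$0$/all-$1$ dichotomy is then immediate, with no case split on $C_i=C_j$ and no appeal to the classification. You instead obtain block-constancy from Theorem~\ref{thm:groupequiv} (Dade--Yadav), applied as $C_jC_i^*=(yx^{-1})^G$ when $C_i\neq C_j$, and handle $C_i=C_j$ separately via $yx^{-1}\in G'$ and $C_k\cap G'=\emptyset$; your translation between ``$C_jC_i^*=C_k$'' and ``$C_j\subseteq C_iC_k$'' (using $C_iC_k=C_kC_i$ and conjugation-invariance of the product set) is complete in both directions, and there is no circularity, since Theorem~\ref{thm:groupequiv} is an external result already used elsewhere in the paper. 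In fact your method mirrors the paper's proof of Lemma~\ref{lem:frobA2} for the Frobenius case, and it would apply verbatim to any group with the Dade--Yadav property (e.g.\ $\Z_3^2\rtimes Q_8$), whereas the paper's proof here is more elementary and self-contained, using only the defining coset property of Camina groups.
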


\begin{proof}
    As $C_k\not\subseteq G'$, $C_k=gG'$ for some $g\in G\setminus G'$. Now let $x\in C_i$, $y\in C_j$. We have $(A_k)_{xy}=1$ if and only if $yx^{-1}\in gG'$, if and only if $y\in gG'x=gxG'$, if and only if $C_j\subseteq gxG'$. Now either $C_i\subseteq G'$ or $C_i=hG'$ for some $h\in G\setminus G'$. If $C_i\subseteq G'$, then $x\in G'$ and $gxG'=gG'=C_iC_k$. If $C_i=hG'$, $h\in G\setminus G'$, then $C_i=xG'$ as well. Then $gxG'=C_kC_i=C_iC_k$. So either way $C_iC_k=gxG'$. So for $x\in C_i$, $y\in C_j$, $(A_k)_{xy}=1$ if and only if $C_j\subseteq C_iC_k$.
 
    Now suppose the $C_i,C_j$ block of $A_k$ has a nonzero entry: $(A_k)_{ab}=1$ for $a\in C_i, b\in C_j$. Then $ba^{-1}\in C_k$. Hence, $b\in C_ka\subseteq C_iC_k$. This implies $C_j\subseteq C_iC_k$, and so for $x\in C_i$ and $y\in C_j$, $(A_k)_{xy}=1$. Thus, the $C_i,C_j$ block of $A_k$ is all $1$'s, so the $C_i,C_j$ block of $A_k$ is all $0$'s or all $1$'s.
\end{proof}

\begin{Lem}\label{lem:cam3centeral}
    Let $C_k\subseteq Z(G)$ be a class of $G$. If $G$ has nilpotency class $2$, let $C_i,C_j$ be any two classes of $G$. If $G$ has nilpotency class $3$, then suppose $C_i,C_j\subseteq G'$ are classes of $G$. Then
    \begin{enumerate}
        \item If $C_i,C_j\subseteq Z(G)$, then the $C_i,C_j$ block of $A_k$ is $0$ if $C_j\neq C_iC_k$ and $1$ if $C_j=C_iC_k$.
        \item If $C_i\neq C_j$ with either $C_i\not\subseteq Z(G)$ or $C_j\not\subseteq Z(G)$, then the $C_i,C_j$ block of $A_k$ is all $0$.
        \item If $C_i\not\subseteq Z(G)$, then the $C_i,C_i$ block of $A_k$ has a single nonzero entry in each row.
    \end{enumerate}
\end{Lem}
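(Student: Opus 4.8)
The whole statement should fall out of one simple fact: since $C_k\subseteq Z(G)$ is a conjugacy class, it is a singleton $\{z\}$ with $z$ central, and therefore $(A_k)_{xy}=1$ if and only if $y=zx$. Thus every row of $A_k$ has a single nonzero entry, and the entire problem is bookkeeping: for a row indexed by $x$, in which block of the conjugacy-class partition does the column $zx$ sit? The preparatory step I would carry out is to pin down the shape of the classes in play. Under the standing hypotheses --- all classes of $G$ if $G$ has nilpotency class $2$, or classes contained in $G'$ if $G$ has nilpotency class $3$ --- Proposition~\ref{lem:pcamclass} (together with $G'=Z(G)$ in the class-$2$ situation) tells us each relevant class $C$ is either a singleton $\{a\}$ with $a\in Z(G)$, or a full coset $xZ(G)$ with $x\notin Z(G)$. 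The one consequence of this I would lean on everywhere is that a central element is absorbed by such a coset: $zx=xz\in xZ(G)$, so $zC=C$ when $C=xZ(G)$, while $zC=\{za\}$ when $C=\{a\}$.

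Granting this, the three parts are short. For (1), with $C_i=\{a\}$ and $C_j=\{b\}$ the block is the single entry $(A_k)_{ab}$, equal to $1$ iff $b=za$; since $C_iC_k=\{az\}$ is exactly the conjugacy class of the central element $za$, this is the assertion that the block is $1$ precisely when $C_j=C_iC_k$ and $0$ otherwise. For (2) I would split on which of $C_i,C_j$ is not central. If $C_i=xZ(G)$ is noncentral, then for every $x'\in C_i$ the candidate column $zx'$ lies back in $x'Z(G)=C_i\neq C_j$, so the block is all $0$; if instead $C_i=\{a\}$ is central but $C_j$ is not, the candidate column $za$ is central, and $za\in C_j$ would force $C_j=\{za\}\subseteq Z(G)$, a contradiction, so again the block is all $0$. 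For (3), with $C_i=xZ(G)$ noncentral, the unique nonzero entry in the row indexed by $x'\in C_i$ sits at column $zx'\in x'Z(G)=C_i$, so the $C_i,C_i$ block has exactly one nonzero entry per row.

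I do not expect a real obstacle. The only places needing attention are the preparatory step --- applying Proposition~\ref{lem:pcamclass} correctly, which requires noting that the hypothesis $C_i,C_j\subseteq G'$ in the class-$3$ case puts us in the $\gamma_2(G)\setminus\gamma_3(G)$ or $\gamma_3(G)$ regime, and that $G'=Z(G)$ disposes of the class-$2$ case --- and the fact that a block of $A_k$ need not be symmetric, so in part (2) the two subcases ``$C_i$ noncentral'' and ``$C_j$ noncentral'' must both genuinely be handled rather than one being deduced from the other by transposition.
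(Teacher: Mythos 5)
Your proposal is correct and follows essentially the same route as the paper: reduce to $C_k=\{g\}$ with $g$ central so that $(A_k)_{xy}=1$ iff $y=gx$, use Proposition \ref{lem:pcamclass} (and $G'=Z(G)$ in class $2$) to see the relevant classes are central singletons or cosets of $Z(G)$, and then check each block entrywise. Your row-by-row bookkeeping is just a mild repackaging of the paper's argument (which phrases part (2) as a coset-disjointness contradiction), and your explicit handling of both subcases in (2) only makes precise the paper's ``without loss of generality.''
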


\begin{proof}
    As $C_k\subseteq Z(G)$, $C_k=\{g\}$ for $g\in Z(G)$.\\
    1: If $C_i,C_j\subseteq Z(G)$, then $C_i=\{x\}$ and $C_j=\{y\}$, $x,y\in Z(G)$. Then the $C_i,C_j$ block of $A_k$ is just the $(x,y)$ entry. So $(A_k)_{xy}=1$ if and only if $yx^{-1}\in C_k=\{g\}$. That is, $(A_k)_{xy}=1$ if and only if $yx^{-1}=g$, Hence, the $C_i,C_j$ block of $A_k$ is $0$ if $C_j\neq C_iC_k$ and $1$ if $C_j=C_iC_k$.\\
    2: Assume $C_i\neq C_j$ with either $C_i\not\subseteq Z(G)$ or $C_j\not\subseteq Z(G)$. Without loss of generality suppose $C_i\not\subseteq Z(G)$. Then by Proposition \ref{lem:pcamclass}, $C_i=hZ(G)$ for some $h\in G$. Let $x\in C_i$ and $y\in C_j$. If $(A_k)_{xy}=1$, then $yx^{-1}\in C_k\subseteq Z(G)$. We note as $x\in C_i=hZ(G)$, that $x^{-1}\in h^{-1}Z(G)$. If $C_j\subseteq Z(G)$, then $yx^{-1}\in h^{-1}Z(G)$. However, $h^{-1}Z(G)\cap Z(G)=\emptyset$, which contradicts $yx^{-1}\in h^{-1}Z(G)\cap Z(G)$. If $C_j\subseteq G\setminus Z(G)$, then by Proposition \ref{lem:pcamclass}, $C_j=\ell Z(G)$ for $\ell\in G\setminus Z(G)$. Since $C_i\neq C_j$, $\ell h^{-1}Z(G)\neq Z(G)$. As $y\in \ell Z(G)$ and $x^{-1}\in h^{-1}Z(G)$, $yx^{-1}\in \ell h^{-1}Z(G)$. However, $\ell h^{-1}Z(G)\cap Z(G)=\emptyset$ which contradicts $yx^{-1}\in h^{-1}Z(G)\cap Z(G)$. In either case we got a contradiction. Therefore, $(A_k)_{xy}=0$.\\
    3: Assume $C_i=C_j$ with $C_i\not\subseteq Z(G)$ and fix $x\in C_i$. Since $C_i\not\subseteq  Z(G)$ by Proposition \ref{lem:pcamclass}, $C_i=xZ(G)$. Then $(A_k)_{xy}=1$ if and only if $yx^{-1}\in C_k=\{g\}$, if and only if $y=xg$. Now as $C_i=xZ(G)$, we have $xg\in C_i$. So the $(x,xg)$ entry of $A_k$ is nonzero. This however is the only nonzero entry of the $x$th row of $A_k$. As we chose $x\in C_i$ arbitrarily, the $C_i,C_i$ block of $A_k$ has a single nonzero entry in each row. 
\end{proof}

For $G$ of nilpotency class $2$ we now write the adjacency matrices as Kronecker products.

\begin{Lem}\label{lem:cam2class}
 Let $G$ be a Camina $p-$group of nilpotency class $2$. Suppose $|Z(G)|=p^r$ and $|G|=p^n$. If $C_k\subseteq Z(G)$ is a conjugacy class of $G$, $C_k=\{g\}$, then $A_k=I_{p^{n-r}}\otimes B$ where $B$ is the adjacency matrix of \{g\} when considered as a conjugacy class in $Z(G)$. 
\end{Lem}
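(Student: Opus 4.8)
The plan is to choose an ordering of the elements of $G$ that makes the asserted Kronecker structure literal. Since $G$ has nilpotency class $2$ we have $G'=Z(G)$ of order $p^r$, so $[G:Z(G)]=p^{n-r}$; fix coset representatives $t_1=e,t_2,\dots,t_{p^{n-r}}$ for $Z(G)$ in $G$ and a fixed ordering $z_1=e,z_2,\dots,z_{p^r}$ of $Z(G)$. Order $G$ so that the block $t_mZ(G)$ precedes $t_{m'}Z(G)$ whenever $m<m'$, and inside $t_mZ(G)$ list the elements as $t_mz_1,t_mz_2,\dots,t_mz_{p^r}$. With respect to this ordering I claim $A_k$ is block-diagonal along the $p^{n-r}$ coset blocks and that every diagonal block equals $B$; this is exactly the statement $A_k=I_{p^{n-r}}\otimes B$. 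Throughout, recall that $(A_k)_{xy}=1$ iff $yx^{-1}\in C_k=\{g\}$, i.e.\ iff $y=xg=gx$, and likewise $(B)_{z_az_b}=1$ iff $z_b=z_ag$.

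For the off-diagonal coset blocks, take $m\neq m'$. Since $G$ is a Camina group with $G'=Z(G)$, if $m,m'\geq 2$ then $t_mZ(G)$ and $t_{m'}Z(G)$ are distinct conjugacy classes, neither contained in $Z(G)$, so part~(2) of Lemma~\ref{lem:cam3centeral} gives that this block is $0$. If $m=1$ or $m'=1$, the corresponding coset is $Z(G)$ itself, which splits into the singleton classes $\{z_a\}$; each resulting $1\times 1$ sub-block pairs a class inside $Z(G)$ with a distinct class not inside $Z(G)$, so part~(2) applies again and the whole block vanishes. For the diagonal blocks: reading the $Z(G),Z(G)$ block one $1\times 1$ sub-block at a time, part~(1) says its $(z_a,z_b)$ entry is $1$ iff $\{z_b\}=\{z_a\}\{g\}$, i.e.\ iff $z_b=z_ag$, so this block is precisely $B$. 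For $m\geq 2$, part~(3) says the $t_mZ(G),t_mZ(G)$ block is a permutation matrix, and the computation $y=xg$ places the unique nonzero entry of row $t_mz_a$ in column $t_mz_ag=t_mz_b$ with $z_b=z_ag$; by the description of $B$ above, this block is again exactly $B$.

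Assembling these observations, $A_k$ is block-diagonal with $p^{n-r}$ identical diagonal blocks, each equal to $B$, hence $A_k=I_{p^{n-r}}\otimes B$. The only point demanding care is the bookkeeping: one must use the \emph{same} ordering $z_1,\dots,z_{p^r}$ of $Z(G)$ inside every coset, so that each diagonal block is literally $B$ rather than merely a permutation-conjugate of it, and one must treat the exceptional coset $Z(G)$ --- a union of $p^r$ singleton classes rather than a single conjugacy class --- separately, although Lemma~\ref{lem:cam3centeral} dispatches it with no extra work. I do not anticipate any genuine difficulty beyond this.
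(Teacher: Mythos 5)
Your proof is correct and follows essentially the same route as the paper's: decompose $A_k$ into coset blocks of $Z(G)=G'$, use Lemma \ref{lem:cam3centeral} to show all off-diagonal coset blocks vanish, and identify each diagonal block (the $Z(G),Z(G)$ block via part (1), the outer blocks via part (3) and the computation $y=gx$) with $B$. The only difference is that you make explicit the consistent ordering of $Z(G)$ inside every coset, a bookkeeping point the paper leaves implicit.
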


\begin{proof}
    The blocks of $A_k$ correspond to cosets of $Z(G)=G'$, so the $xZ(G),yZ(G)$ block of $A_k$ is the same as the $x^G,y^G$ block for any $x,y\not\in Z(G)$. So in this situation the blocks are the same as the class blocks. The only difference is that classes in $Z(G)$ are now being considered as a single block. From Lemma \ref{lem:cam3centeral}, the $xZ(G),yZ(G)$ block of $A_k$ is all $0$ if $xZ(G)\neq yZ(G)$ for any $x,y\in G$. Hence, only the diagonal blocks are nonzero. Additionally from the proof of Lemma \ref{lem:cam3centeral}, in the $xZ(G),xZ(G)$ block for $x\not\in Z(G)$ the only nonzero entry in the $y$th row of $A_k$ is the $(y,gy)$ entry for all $y\in xZ(G)$. Now consider the $Z(G),Z(G)$ block. Consider row $y\in Z(G)$ of this block. From Lemma \ref{lem:cam3centeral}, the $(y,z)$ entry of $A_k$ is $1$ if and only if $z^G=g^Gy^G$. However, as all of $y,g,z\in Z(G)$ this is the same thing as saying $z=gy$. Hence, in the $Z(G),Z(G)$ block of $A_k$ an entry is nonzero if and only if it is the $(y,gy)$ entry for some $y\in Z(G)$. Hence, for all $x\in G$ the $xZ(G),xZ(G)$ block of $A_k$ has a nonzero entry in entry $(y,gy)$ for all $y\in xZ(G)$ and is $0$ otherwise. Additionally, $A_k$ is $0$ outside of these diagonal blocks. Then $A_k=I_{p^{n-r}}\otimes B$ where $B$ is a $p^r\times p^r$ matrix such that for row $y$, the only nonzero entry is $(y,gy)$. That is, $B$ is a $p^r\times p^r$ matrix such that $(B)_{uv}=1$ if and only if $vu^{-1}=g$. This is identical to the condition for the adjacency matrix of $\{g\}$ in $Z(G)$ to have a nonzero entry. Hence, $B$ is the adjacency matrix of $\{g\}$ when considered as a conjugacy class in $Z(G)$.   
\end{proof}

\begin{Lem}\label{lem:cam2class'}
 Let $G$ be a Camina $p-$group of nilpotency class $2$ for some prime $p$. Suppose $|Z(G)|=p^r$ and $|G|=p^n$. If $C_k$ is a conjugacy class of $G$ such that $C_k\not\subseteq Z(G)$, with $C_k=gZ(G)$, then $A_k=D\otimes J_{p^r}$ where $D$ is the adjacency matrix of the conjugacy class $\{gZ(G)\}$ in $G/Z(G)$.
\end{Lem}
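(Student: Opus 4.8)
The plan is to mimic the proof of Lemma \ref{lem:cam2class}, re-blocking $A_k$ according to the cosets of $Z(G)$ rather than the conjugacy classes of $G$, and then reading off the Kronecker structure. Since $G$ has nilpotency class $2$ we have $G'=Z(G)$, so $C_k=gZ(G)$ with $g\notin G'$, and Lemma \ref{lem:cam2outer} applies directly: for any classes $C_i,C_j$ the $C_i,C_j$ block of $A_k$ is the all-$1$'s matrix if $C_j\subseteq C_iC_k$ and the all-$0$'s matrix otherwise. The conjugacy classes of $G$ are the singletons $\{z\}$ with $z\in Z(G)$ together with the nontrivial cosets $hZ(G)$ with $h\notin Z(G)$, so the only difference between blocking by classes and blocking by cosets of $Z(G)$ is that the $p^r$ singleton class-blocks sitting inside $Z(G)$ get amalgamated into a single $p^r\times p^r$ coset-block indexed by $Z(G)$; no reordering of the elements of $G$ is needed beyond grouping each coset together, exactly as in Lemma \ref{lem:cam2class}.

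Next I would compute the coset-blocks by hand. For $x,y\in G$ one has $(A_k)_{xy}=1$ if and only if $yx^{-1}\in gZ(G)$, if and only if $y\in gZ(G)x=gxZ(G)$, if and only if $yZ(G)=(gZ(G))(xZ(G))$ in $G/Z(G)$. This condition depends only on the cosets $xZ(G)$ and $yZ(G)$, so the $xZ(G),yZ(G)$ block of $A_k$ is $J_{p^r}$ when $yZ(G)=(gZ(G))(xZ(G))$ and the zero block otherwise. In particular the amalgamation of the singleton blocks inside $Z(G)$ is consistent: the $Z(G),Z(G)$ block is all $0$'s (since $(gZ(G))(Z(G))=gZ(G)\neq Z(G)$ as $g\notin Z(G)$), while the $Z(G),gZ(G)$ block is all $1$'s.

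Finally, let $D$ be the adjacency matrix of the conjugacy class $\{gZ(G)\}$ of $G/Z(G)$, which is indeed a conjugacy class because $G/Z(G)$ is abelian; by definition $(D)_{\overline{x}\,\overline{y}}=1$ if and only if $\overline{y}\,\overline{x}^{-1}=gZ(G)$, i.e. $\overline{y}=(gZ(G))\overline{x}$. Comparing with the previous paragraph, the $xZ(G),yZ(G)$ block of $A_k$ equals $J_{p^r}$ exactly when $(D)_{\overline{x}\,\overline{y}}=1$ and is the zero block otherwise, which is precisely the block description of $D\otimes J_{p^r}$; hence $A_k=D\otimes J_{p^r}$. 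I do not expect a genuine obstacle here: the proof is essentially forced once Lemma \ref{lem:cam2outer} is in hand, and the only point requiring a little care is verifying that merging the finer class-blocks inside $Z(G)$ into the single coset-block $Z(G)$ still yields a constant ($0$ or $1$) block, which the direct computation above settles.
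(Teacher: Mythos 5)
Your proof is correct and follows the same route the paper intends: the paper's own proof of this lemma is only the remark that it is ``similar to Lemma \ref{lem:cam2class}, using Lemma \ref{lem:cam2outer},'' and your write-up simply fills in those details, re-blocking by cosets of $Z(G)$ and reading off the Kronecker structure. The direct computation that $(A_k)_{xy}=1$ iff $yZ(G)=(gZ(G))(xZ(G))$ is exactly the needed step, so nothing is missing.
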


\begin{proof}
    The proof is similar to that of Lemma \ref{lem:cam2class}, using Lemma \ref{lem:cam2outer}.
\end{proof}

Now that we have determined the adjacency matrices of $\mathcal{G}(G)$ we can write $\mathcal{G}(G)$ as a wreath product of two group association schemes of abelian groups.

\begin{Cor}\label{cor:cam2wreath}
     Let $G$ be a Camina $p-$group of nilpotency class $2$ for some prime $p$, where $|Z(G)|=p^r$ and $|G|=p^n$. Then $\mathcal{G}(G)=\mathcal{G}(Z(G))\wr \mathcal{G}(G/Z(G))$.
\end{Cor}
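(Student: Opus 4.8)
The plan is to prove the corollary by directly matching adjacency matrices: we read off the adjacency matrices of $\mathcal{G}(G)$ from Lemmas \ref{lem:cam2class} and \ref{lem:cam2class'} and verify that, under the natural identification of the vertex set of $G$ with $Z(G)\times (G/Z(G))$, they are exactly the adjacency matrices of $\mathcal{G}(Z(G))\wr\mathcal{G}(G/Z(G))$ produced by the definition of the wreath product. Since $G$ has nilpotency class $2$ we have $G'=Z(G)$, so by the Camina property the conjugacy classes of $G$ come in two families: the $p^r$ singletons $\{z\}$ with $z\in Z(G)$, and the $p^{n-r}-1$ non-identity cosets $gZ(G)$ with $g\in G\setminus Z(G)$. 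Since $Z(G)$ and $G/Z(G)$ are abelian, all of their conjugacy classes are singletons; write $B_0=I_{p^r},B_1,\dots,B_{p^r-1}$ for the adjacency matrices of $\mathcal{G}(Z(G))$ and $D_0=I_{p^{n-r}},D_1,\dots,D_{p^{n-r}-1}$ for those of $\mathcal{G}(G/Z(G))$.

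Next I would fix a transversal for $Z(G)$ in $G$ and order the elements of $G$ first by cosets of $Z(G)$ and then, within each coset, by a common copy of $Z(G)$; this is precisely the ordering implicit in the block/Kronecker decompositions of Lemmas \ref{lem:cam2class} and \ref{lem:cam2class'}, and it realizes the vertex set of $G$ as $Z(G)\times(G/Z(G))$. With this ordering, Lemma \ref{lem:cam2class} (applied to every class $C_k=\{z\}\subseteq Z(G)$, including $C_0=\{e\}$) gives that the adjacency matrix of $\{z\}$ equals $I_{p^{n-r}}\otimes B$, where $B$ is the adjacency matrix of $\{z\}$ in $\mathcal{G}(Z(G))$; and Lemma \ref{lem:cam2class'} gives that the adjacency matrix of a coset $gZ(G)\not\subseteq Z(G)$ equals $D\otimes J_{p^r}$, where $D$ is the adjacency matrix of $\{gZ(G)\}$ in $\mathcal{G}(G/Z(G))$. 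Hence the full set of adjacency matrices of $\mathcal{G}(G)$ is
\[
\{\,I_{p^{n-r}}\otimes B_k : 0\le k\le p^r-1\,\}\ \cup\ \{\,D_k\otimes J_{p^r} : 1\le k\le p^{n-r}-1\,\}.
\]

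Finally I would apply the definition of $\mathcal{X}\wr\mathcal{Y}$ with $\mathcal{X}=\mathcal{G}(Z(G))$ (so $m=p^r$) and $\mathcal{Y}=\mathcal{G}(G/Z(G))$ (so $n=p^{n-r}$): its adjacency matrices are $D_0\otimes B_0=I_{p^{n-r}}\otimes I_{p^r}$, then $I_{p^{n-r}}\otimes B_k$ for $1\le k\le p^r-1$, and then $D_k\otimes J_{p^r}$ for $1\le k\le p^{n-r}-1$. Comparing with the displayed set above (the matrix $I_{p^{n-r}}\otimes I_{p^r}=I_{p^n}$ being common to both), these two collections of adjacency matrices coincide, and since an association scheme is determined by its unordered set of adjacency matrices we conclude $\mathcal{G}(G)=\mathcal{G}(Z(G))\wr\mathcal{G}(G/Z(G))$. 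The only point demanding care is bookkeeping: checking that the two families of conjugacy classes partition the classes of $G$ with no overlap (there are $p^r+(p^{n-r}-1)$ of them, matching the $1+(p^r-1)+(p^{n-r}-1)$ adjacency matrices of the wreath product), and that the Kronecker decompositions of Lemmas \ref{lem:cam2class} and \ref{lem:cam2class'} are taken with respect to one and the same identification $G\cong Z(G)\times(G/Z(G))$, so that the matrices agree on the nose rather than merely up to a simultaneous permutation of rows and columns. Both are immediate from the Camina structure and the proofs of the two lemmas, so there is no substantive obstacle; the corollary is essentially a repackaging of Lemmas \ref{lem:cam2class} and \ref{lem:cam2class'}.
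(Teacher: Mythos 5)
Your proposal is correct and follows essentially the same route as the paper: both proofs read off the adjacency matrices of $\mathcal{G}(G)$ from Lemmas \ref{lem:cam2class} and \ref{lem:cam2class'} and match them, class by class, against the adjacency matrices produced by the definition of $\mathcal{G}(Z(G))\wr\mathcal{G}(G/Z(G))$. Your added bookkeeping (counting the classes and fixing a single identification $G\cong Z(G)\times(G/Z(G))$) is a sound, slightly more explicit version of what the paper leaves implicit.
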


\begin{proof}
    Let $A_0,A_1,\cdots, A_{p^r}$ be the adjacency matrices of $\mathcal{G}(Z(G))$ and $B_0,B_1,\cdots, B_{p^{n-r}}$ be the adjacency matrices of $\mathcal{G}(G/Z(G))$. The adjacency matrices of $\mathcal{G}(Z(G))\wr \mathcal{G}(G/Z(G))$ are $D_0=I_{p^{n-r}}\otimes I_{p^r}=I_{p^n}$, $D_k=I_{p^{n-r}}\otimes A_k$ where $1\leq k\leq p^r$, and $D_{p^r+k}=B_k\otimes J_{p^r}$ for $1\leq k\leq p^{n-r}$.  From Lemma \ref{lem:cam2class}, all the adjacency matrices $M_k$ of $\mathcal{G}(G)$ for some class $C_k=\{g\}\subset Z(G)$ are of the form $M_k=I_{p^{n-r}}\otimes A_k$ where $A_k$ is the adjacency matrix for $C_k=\{g\}$ in $Z(G)$. Ranging over all $C_k\subset Z(G)$ we have $D_i=M_i$ for all $0\leq i\leq p^r$. From Lemma \ref{lem:cam2class'}, the adjacency matrices $M_k$ of $\mathcal{G}(G)$ for any class $C_k=gZ(G)$, has $M_k=B_\ell\otimes J_{p^r}$ where $B_\ell$ is the adjacency matrix for $\{gZ(G)\}$ in $G/Z(G)$. We then have a one-to-one correspondence between the adjacency matrices $M_k$ of $\mathcal{G}(G)$ for $C_k\not\subseteq Z(G)$ and the adjacency matrices of the form $D_{p^r+k}=B_k\otimes J_{p^r}$ for $\mathcal{G}(Z(G))\wr \mathcal{G}(G/Z(G))$.  

    We then see $\mathcal{G}(G)$ and $\mathcal{G}(Z(G))\wr \mathcal{G}(G/Z(G))$ have the exact same adjacency matrices. Therefore, $\mathcal{G}(G)=\mathcal{G}(Z(G))\wr \mathcal{G}(G/Z(G))$.    
\end{proof}

For $G$ a Camina $p-$group with nilpotency class $3$ a similar argument is followed to get the wreath product decomposition. We state it here. A full proof can be found in \cite{Bastiandiss}.

\begin{Cor}\label{cor:cam3wreath}
    Let $G$ be a Camina $p-$group of nilpotency class $3$ for some prime $p$. Suppose that $|Z(G)|=p^r$ for some $r>0$ and $|G'\colon Z(G)|=p^n$. Then $\mathcal{G}(G)=\mathcal{G}(Z(G))\wr \mathcal{G}(G'/Z(G))\wr \mathcal{G}(G/G')$.
\end{Cor}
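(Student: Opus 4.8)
The plan is to follow exactly the template already established for the nilpotency class $2$ case in Corollary \ref{cor:cam2wreath}, but now iterating the decomposition twice to reflect the three-step lower central series $\{e\}\leq Z(G)\leq G'\leq G$ given by Proposition \ref{thm:Camina3}. First I would fix notation: write $|Z(G)|=p^r$, $|G'\colon Z(G)|=p^n$, and $|G\colon G'|=p^{2n}$ (the last by Proposition \ref{thm:Camina3}). By Proposition \ref{prop:cammod}, $G/Z(G)$ is again a Camina group (since $Z(G)<G'$ is normal in $G$), and it has nilpotency class $2$ with center $G'/Z(G)$. So Corollary \ref{cor:cam2wreath} applies to $G/Z(G)$, giving $\mathcal{G}(G/Z(G))=\mathcal{G}(G'/Z(G))\wr \mathcal{G}\big((G/Z(G))/(G'/Z(G))\big)=\mathcal{G}(G'/Z(G))\wr \mathcal{G}(G/G')$. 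Then it suffices to prove the single-step statement $\mathcal{G}(G)=\mathcal{G}(Z(G))\wr \mathcal{G}(G/Z(G))$, after which associativity of the wreath product of association schemes finishes the proof.

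To prove $\mathcal{G}(G)=\mathcal{G}(Z(G))\wr \mathcal{G}(G/Z(G))$ I would classify the adjacency matrices of $\mathcal{G}(G)$ by the location of the corresponding conjugacy class, using Proposition \ref{lem:pcamclass}. The classes of $G$ are: the singletons $\{g\}$ with $g\in Z(G)$; the classes $x\gamma_3(G)=xZ(G)$ with $x\in G'\setminus Z(G)$; and the classes $xG'$ with $x\in G\setminus G'$. For a class $C_k=\{g\}\subseteq Z(G)$, the argument of Lemma \ref{lem:cam2class} carries over verbatim once we know (from Lemma \ref{lem:cam3centeral}, which is already stated for the class-$3$ case with the hypothesis $C_i,C_j\subseteq G'$) that the $xZ(G),yZ(G)$ block of $A_k$ is zero unless the cosets agree; the point requiring a small extra check is that a left-multiplication-by-$g$ pattern also holds on the blocks indexed by classes outside $G'$, which follows because $g\in Z(G)$ commutes with everything and $C_i=xZ(G)$ implies $gC_i=gxZ(G)=C_i$ for such $x$. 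This yields $A_k=I_{p^{2n}\cdot p^n}\otimes B$ — equivalently $I_{p^{3n-r}\,/\,p^{?}}$... more carefully $A_k=I_{|G|/|Z(G)|}\otimes B=I_{p^{2n+n}}\otimes B$ wait — $|G|/|Z(G)|=p^{2n}\cdot p^{n}\cdot p^{r}/p^r$; I would simply write $A_k=I_{|G\colon Z(G)|}\otimes B$ with $B$ the adjacency matrix of $\{g\}$ in $\mathcal{G}(Z(G))$, mirroring Lemma \ref{lem:cam2class}. For a class $C_k$ with $C_k\not\subseteq Z(G)$ (whether $C_k\subseteq G'$ or not), Lemma \ref{lem:cam2outer} applies when $C_k\not\subseteq G'$ directly, and for $C_k\subseteq G'\setminus Z(G)$ one argues as in Lemma \ref{lem:cam2class'} (blocks correspond to $Z(G)$-cosets, the $xZ(G),yZ(G)$ block is all $1$'s or all $0$'s according to whether $yx^{-1}Z(G)$ lands in the right coset) to get $A_k=D\otimes J_{p^r}$ with $D$ the adjacency matrix of the class $C_kZ(G)/Z(G)$ in $\mathcal{G}(G/Z(G))$.

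Assembling these: the adjacency matrices of $\mathcal{G}(Z(G))\wr \mathcal{G}(G/Z(G))$ are $D_0=I_{p^{3n}}$... i.e. $I_{|G\colon Z(G)|}\otimes I_{|Z(G)|}$, then $I_{|G\colon Z(G)|}\otimes A_k$ for the nontrivial classes $A_k$ of $\mathcal{G}(Z(G))$, and $B_\ell\otimes J_{|Z(G)|}$ for the nontrivial classes $B_\ell$ of $\mathcal{G}(G/Z(G))$. The paragraph above shows these are precisely the adjacency matrices of $\mathcal{G}(G)$, in bijection, so $\mathcal{G}(G)=\mathcal{G}(Z(G))\wr \mathcal{G}(G/Z(G))$. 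Finally, by the observation that $G/Z(G)$ is a Camina $p$-group of class $2$ with center $G'/Z(G)$, Corollary \ref{cor:cam2wreath} gives $\mathcal{G}(G/Z(G))=\mathcal{G}(G'/Z(G))\wr \mathcal{G}(G/G')$, and substituting yields $\mathcal{G}(G)=\mathcal{G}(Z(G))\wr \mathcal{G}(G'/Z(G))\wr \mathcal{G}(G/G')$, using that the wreath product of association schemes is associative. The main obstacle — the reason the full proof is deferred to \cite{Bastiandiss} — is the bookkeeping in the first bullet: verifying that a central class acts by the same ``left translation by $g$'' pattern uniformly across \emph{all} the $Z(G)$-coset blocks, including those coming from classes outside $G'$ which are unions of several $Z(G)$-cosets, so that the clean Kronecker form $I\otimes B$ really does hold; once that is nailed down, everything else is a direct transcription of the class-$2$ arguments together with one application of Proposition \ref{prop:cammod}.
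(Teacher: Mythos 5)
Your proposal is correct, but it is organized differently from what the paper (which defers the full argument to \cite{Bastiandiss} and only says a computation ``similar'' to the class-$2$ case is carried out) indicates: the paper's route is a direct verification that each adjacency matrix of $\mathcal{G}(G)$ has the Kronecker form demanded by the three-factor wreath product, via class-$3$ analogues of Lemmas \ref{lem:cam2outer}--\ref{lem:cam2class'}, whereas you prove only the one-step identity $\mathcal{G}(G)=\mathcal{G}(Z(G))\wr \mathcal{G}(G/Z(G))$ by the block computation and then recurse, applying Corollary \ref{cor:cam2wreath} to the quotient $G/Z(G)$ and invoking associativity of $\wr$. Your reduction is legitimate: Proposition \ref{prop:cammod} gives that $G/Z(G)=G/\gamma_3(G)$ is Camina, it has class exactly $2$, and its center is $G'/Z(G)$ --- note this last identification uses the fact (already used in the proof of Theorem \ref{cor:p2dim}) that a Camina $p$-group of class $2$ satisfies $Z=G'$; the containment $G'/Z(G)\leq Z(G/Z(G))$ alone would not suffice to apply Corollary \ref{cor:cam2wreath}, so you should cite that fact explicitly. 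The one-step block computation itself is sound: by Proposition \ref{lem:pcamclass} every noncentral class of $G$ is a union of $Z(G)$-cosets and is exactly the preimage of a class of $G/Z(G)$, so (after fixing an ordering of $G$ compatible with the $Z(G)$-cosets, a point worth stating since the wreath product is defined on $Z(G)\times G/Z(G)$) a central class $\{g\}$ gives $A_k=I_{|G\colon Z(G)|}\otimes B$ with $B$ the adjacency matrix of $\{g\}$ in $\mathcal{G}(Z(G))$ --- your observation that the ``multiply by $g$'' pattern works uniformly on all coset blocks, not just those covered by the stated hypotheses of Lemma \ref{lem:cam3centeral}, is exactly the needed extra check --- and any noncentral class gives $D\otimes J_{|Z(G)|}$ with $D$ the adjacency matrix of the corresponding class of $G/Z(G)$. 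What each approach buys: yours is shorter and reuses Corollary \ref{cor:cam2wreath}, at the cost of needing associativity of the wreath product (which the paper uses only implicitly in writing the unparenthesized triple product) and the center identification for the quotient; the direct route matches the paper's template verbatim and produces explicit Kronecker forms for all three types of classes, which are what feed into the Wedderburn computations of Theorem \ref{thm:cam3decomp}. The index slip in your middle paragraph is harmless ($|G\colon Z(G)|=p^{2n}\cdot p^{n}=p^{3n}$ is the correct size), but it should be cleaned up.
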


The last group to consider from Theorem \ref{thm:acclassify}, is $G=\Z_3^2\rtimes Q_8$. The wreath product decomposition can be found directly using Magma \cite{Magma}:

\begin{Prop}\label{prop:72,41wreath}
    Let $G=\Z_3^2\rtimes Q_8$. Then $\mathcal{G}(G)=\mathcal{K}_9\wr \mathcal{G}(\Z_2)\wr \mathcal{G}(\Z_2^2)$. \hfill$\qed$
\end{Prop}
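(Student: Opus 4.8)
The proposition can of course be confirmed mechanically by having Magma build both $\mathcal{G}(G)$ and $\mathcal{K}_9\wr\mathcal{G}(\Z_2)\wr\mathcal{G}(\Z_2^2)$ and checking that the adjacency matrices agree, which is the route indicated in the text. The conceptual proof I would give runs parallel to the treatment of the Frobenius group in Corollary~\ref{cor:frobwreath}, followed by an application of Corollary~\ref{cor:cam2wreath} to $Q_8$. First I would record that $G=\Z_3^2\rtimes Q_8$ is the Frobenius group with kernel $K=\Z_3^2$ and complement $Q_8$, the action being the essentially unique faithful one, realized through $Q_8\hookrightarrow SL_2(3)\leq GL_2(3)$; every nonidentity element of $Q_8$ then acts fixed-point-freely on $K$, and since $|Q_8|=8=|K\setminus\{e\}|$ the complement acts regularly, in particular transitively, on $K\setminus\{e\}$. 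A short commutator computation (using that $k\mapsto k^{-1}\,{}^{q}k$ is a bijection of $K$ for $q\neq e$) gives $G'=K\rtimes Q_8'$, a group of order $18$, and $G/G'\cong Q_8/Q_8'\cong\Z_2^2$, so the underlying filtration is $\{e\}\leq K\leq G'\leq G$. Next, using that conjugation by any $x\notin K$ is a fixed-point-free automorphism of $K$, one shows (the same bookkeeping as in the proof of Theorem~\ref{cor:frobdim}) that for $x\notin K$ the class $x^G$ is the full preimage in $G$ of $(xK)^{G/K}$, while inside $K$ the classes are $\{e\}$ and $K\setminus\{e\}$ (the latter by transitivity of $Q_8$ on $K\setminus\{e\}$). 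Thus $G$ has exactly six classes: $\{e\}$, $K\setminus\{e\}$, and the preimages of the four nonidentity classes of $Q_8$.

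The heart of the argument is to show $\mathcal{G}(G)=\mathcal{K}_9\wr\mathcal{G}(Q_8)$. Ordering the elements of $G$ so that the $K$-cosets are the outer blocks, I would prove --- by a routine generalization of Lemmas~\ref{lem:frobA1} and~\ref{lem:frobA2} and Theorem~\ref{thm:frobA} from $\Z_p^r\rtimes\Z_{p^r-1}$ to an arbitrary Frobenius group whose complement is transitive on the nonidentity elements of the kernel --- that the adjacency matrix of $K\setminus\{e\}$ is $I_8\otimes(J_9-I_9)$ and that every other nonidentity adjacency matrix of $\mathcal{G}(G)$ has the form $\hat A_i\otimes J_9$, where $\hat A_i$ is the adjacency matrix in $\mathcal{G}(Q_8)$ of the corresponding class of $Q_8$. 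The crucial point is that for $x,y$ in distinct $K$-cosets, $yx^{-1}$ lies in a given class of $G$ if and only if $(yK)(xK)^{-1}$ lies in the corresponding class of $Q_8$; this is immediate from the class description above and is exactly the Camina-type ``constant on blocks'' phenomenon of Lemma~\ref{lem:cam2outer}. Comparing with the defining adjacency matrices of $\mathcal{K}_9\wr\mathcal{G}(Q_8)$ then gives the equality.

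Finally, since $Q_8$ is a Camina $2$-group of nilpotency class $2$ with $Z(Q_8)=Q_8'\cong\Z_2$ and $Q_8/Z(Q_8)\cong\Z_2^2$, Corollary~\ref{cor:cam2wreath} gives $\mathcal{G}(Q_8)=\mathcal{G}(\Z_2)\wr\mathcal{G}(\Z_2^2)$; substituting into the previous step and using associativity of the wreath product of association schemes yields $\mathcal{G}(G)=\mathcal{K}_9\wr\mathcal{G}(\Z_2)\wr\mathcal{G}(\Z_2^2)$. The main obstacle is the middle step: one must choose the block ordering so that the single class $K\setminus\{e\}$ is responsible for precisely the innermost $\mathcal{K}_9$ factor while the remaining classes reproduce $\mathcal{G}(Q_8)$ faithfully, and one must check carefully that $A_i=\hat A_i\otimes J_9$ for every class outside $K$ --- this is exactly where the fixed-point-free action of the complement (via the class description) is needed. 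The remaining steps are short, and, as noted, the whole proposition can instead simply be read off from a direct Magma comparison of the two schemes.
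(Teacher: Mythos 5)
Your proposal is correct, but it follows a genuinely different route from the paper: the paper's ``proof'' of Proposition~\ref{prop:72,41wreath} is simply a direct machine verification in Magma \cite{Magma} (the proposition is stated with a \(\qed\) and no argument), whereas you give a structural proof. Your key steps all check out: $G$ is Frobenius with kernel $K=\Z_3^2$ and complement $Q_8$ acting regularly on $K\setminus\{e\}$, so the classes are exactly $\{e\}$, $K\setminus\{e\}$, and the full preimages of the four nonidentity classes of $Q_8\cong G/K$ (six classes in all, consistent with $\dim T(G)=36+8=44$); ordering $G$ by $K$-cosets then gives adjacency matrices $I_8\otimes I_9$, $I_8\otimes(J_9-I_9)$, and $\hat A_i\otimes J_9$, which are precisely the defining matrices of $\mathcal{K}_9\wr\mathcal{G}(Q_8)$; and since $Q_8$ is a Camina $2$-group of class $2$ with $Z(Q_8)=Q_8'\cong\Z_2$, Corollary~\ref{cor:cam2wreath} plus associativity of the wreath product finishes the argument. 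What your approach buys is an explanation rather than a verification: it reuses the block arguments of Lemmas~\ref{lem:frobA1}, \ref{lem:frobA2} and \ref{lem:cam2outer}, and it actually proves the more general statement that any Frobenius group whose complement $H$ acts transitively on the nonidentity elements of the kernel $K$ has $\mathcal{G}(G)=\mathcal{K}_{|K|}\wr\mathcal{G}(H)$, of which both Corollary~\ref{cor:frobwreath} and this proposition are special cases. What the paper's route buys is brevity and certainty without having to write out the ``routine generalization'' you gesture at, which in a careful write-up you would still need to spell out (in particular the identification of $G$ with $K\times Q_8$ as index sets, i.e.\ equality of schemes up to the coset ordering, and the implicitly used associativity of $\wr$, which the paper also assumes when writing the unparenthesized triple wreath product). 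These are presentational gaps, not mathematical ones; the argument itself is sound.
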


We now give the Wedderburn components for the Terwilliger algebra in each of these cases. The actual computation of these Wedderburn components can be done using Theorem 5.3 of \cite{WreathProduct1}. We note in doing so, the definition of the wreath product we have used differs from that in \cite{WreathProduct1}. However, they are equivalent via a permutation matrix. Full details of the computation of these idempotents can be found in \cite{Bastiandiss}.

First we note that the Wedderburn decomposition for $T(G)$ when $G$ is an abelian group only consists of the primary component $V$, with dimension $|G|^2$. Now we state the Wedderburn decomposition in the other cases. Using Theorem 5.3 of \cite{WreathProduct1} we have:

\begin{Thm}\label{cor:frobdecomp}
     Let $G=Z_p^{r}\rtimes Z_{p^r-1}$ for some prime $p$ and $n\geq 1$. Suppose $C_0=\{e\},C_1,\cdots, C_{p^r-1}$ are the conjugacy classes of $G$. Then the Wedderburn decomposition of $T(G)$ is
     \[T(G)\cong V\oplus W_1\oplus W_2 \oplus \cdots \oplus W_{p^r-1}\]
     where each $W_i=\emph{Span}(B_i)$ and the idempotent $B_i$ is defined to be the $|G|\times |G|$ matrix indexed by $G$ which is $0$ outside the $C_i,C_i$ block and has $C_i,C_i$ block
    \[d_{ii}=\frac{-1}{|C_i|-1}J_{|C_i|}+\left(1-\frac{-1}{|C_i|-1}\right)I_{|C_i|}=\begin{pmatrix}
        1 & \frac{-1}{|C_i|-1} & \frac{-1}{|C_i|-1} & \cdots & \frac{-1}{|C_i|-1} \\ \frac{-1}{|C_i|-1} & 1 & \frac{-1}{|C_i|-1} & \cdots & \frac{-1}{|C_i|-1} \\
        \vdots & & \ddots & & \vdots \\
        \vdots & & & \ddots & \vdots \\
        \frac{-1}{|C_i|-1} & \frac{-1}{|C_i|-1} & \cdots & \frac{-1}{|C_i|-1} & 1
    \end{pmatrix}.\]
\end{Thm}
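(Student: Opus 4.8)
\textbf{Proof proposal for Theorem \ref{cor:frobdecomp}.}

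The plan is to exploit the wreath product decomposition $\mathcal{G}(G)=\mathcal{K}_{p^r}\wr \mathcal{G}(\Z_{p^r-1})$ from Corollary \ref{cor:frobwreath} together with Theorem 5.3 of \cite{WreathProduct1}, which describes the Wedderburn components of a Terwilliger algebra of a wreath product. Since $T(G)$ is AC (Theorem \ref{thm:acclassify}) and therefore triply regular (Theorem \ref{thm:tanaka}), every non-primary irreducible module is $1$-dimensional, so each non-primary Wedderburn component is a full matrix algebra over $\C$ of size $1\times 1$; hence it suffices to produce the right number of rank-one orthogonal idempotents $B_i$ that are orthogonal to the primary component $V$ and verify they lie in $T(G)$. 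First I would recall from the structure theory (e.g. \cite{Terwilliger1, WreathProduct1}) that the irreducible $T(x)$-modules, other than the primary one, are indexed by the ``strata'' coming from the second factor $\mathcal{G}(\Z_{p^r-1})$; concretely, for the trivial scheme $\mathcal{K}_{p^r}$ on each fibre, the only non-trivial local eigenspace is the ``sum-zero'' subspace of dimension $|C_i|-1$ sitting in the $C_i$-block. This is exactly the eigenspace on which the matrix $d_{ii}=\frac{-1}{|C_i|-1}J_{|C_i|}+\bigl(1-\frac{-1}{|C_i|-1}\bigr)I_{|C_i|}$ acts: one checks directly that $d_{ii}$ is the projection onto the orthogonal complement of the all-ones vector within the $C_i$-block, scaled so as to fix the right subspace. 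Wait---one should double-check the normalization: $d_{ii}$ as written has the all-ones vector as an eigenvector with eigenvalue $1+|C_i|\cdot\frac{-1}{|C_i|-1}\cdot\frac{|C_i|-1}{|C_i|}$... I would just verify that $d_{ii}\mathbf{1}=0$ and that $d_{ii}$ is idempotent of rank $|C_i|-1$, which pins it down.

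The key steps, in order, would be: (1) write down the primitive central idempotents of the Bose--Mesner algebra $\mathfrak{A}$ and the dual idempotents $E_i^*(e)$, noting that $E_i^*(e)$ is the diagonal $0/1$ matrix picking out the $C_i$-block; (2) observe that $B_i:=E_i^*(e)\,M\,E_i^*(e)$ for a suitable $M\in\mathfrak{A}$ already lies in $T(G)$, and choose $M$ (a linear combination of $A_0=I$ and $A_1=I_{p^r-1}\otimes(J_{p^r}-I_{p^r})$) so that the resulting $C_i,C_i$ block equals $d_{ii}$; the block structure of $A_1$ from Lemma \ref{lem:frobA1} makes this explicit, since the $C_i,C_i$ block of $A_1$ is $J_{|C_i|}-I_{|C_i|}$, so $B_i=\frac{1}{|C_i|}E_i^*(e)\bigl(-A_1+( |C_i|-1)? \bigr)\dots$ needs the right coefficients, but it is a $2\times 2$ linear algebra problem; (3) verify that the $B_i$ are pairwise orthogonal idempotents (immediate since they live in disjoint diagonal blocks) and that each $B_i$ is orthogonal to the primary component $V$; (4) count: there are $p^r-1$ non-identity conjugacy classes, and for $C_1=G'\setminus\{e\}$ as well as for each coset-class $C_i$ the block has size $\geq 2$ so $d_{ii}\neq 0$, giving $p^r-1$ non-primary components $W_i=\Span(B_i)$; (5) conclude by dimension count or by Theorem 5.3 of \cite{WreathProduct1} that $T(G)\cong V\oplus W_1\oplus\cdots\oplus W_{p^r-1}$ exhausts the Wedderburn decomposition, cross-checking with $\dim T(G)=p^{2r}+p^r-1$ from Theorem \ref{cor:frobdim} (indeed $\dim V+(p^r-1)\cdot 1=(p^r)^2+p^r-1$, which matches).

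The main obstacle I anticipate is bookkeeping rather than conceptual: correctly translating Theorem 5.3 of \cite{WreathProduct1} through the permutation equivalence between the two conventions for the wreath product (the paper explicitly flags this), and making sure the idempotent $B_i$ produced by that theorem really has $C_i,C_i$-block exactly $d_{ii}$ with the stated normalization constant $\frac{-1}{|C_i|-1}$ rather than some other scalar. A secondary subtlety is confirming that the classes inside $G'$ contribute only the single component $W_1$ (coming from $C_1$) and do not spawn extra modules---this follows because, after the wreath decomposition, $C_0$ and $C_1$ together form the single ``bottom fibre'' $\Z_p^r$, on which $\mathcal{K}_{p^r}$ contributes its unique non-trivial stratum, namely $W_1$. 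Once these normalization and indexing points are nailed down, the verification that each $B_i$ is an idempotent annihilated by $V$ and by all $B_j$, $j\neq i$, is routine block-matrix algebra.
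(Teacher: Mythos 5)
Your route is essentially the paper's: the paper offers no in-text proof beyond invoking the wreath product decomposition of Corollary \ref{cor:frobwreath} and Theorem 5.3 of \cite{WreathProduct1} (with the permutation-of-conventions caveat and full details deferred to \cite{Bastiandiss}), and your plan—apply that theorem, realize $B_i=E_i^*\bigl(A_0-\tfrac{1}{|C_i|-1}A_1\bigr)E_i^*\in T(G)$ via the block structure of Lemma \ref{lem:frobA1}, check orthogonality, and close with the dimension count $p^{2r}+(p^r-1)=\dim T(G)$ from Theorem \ref{cor:frobdim}—is the same argument. One correction to the check you flagged: $d_{ii}$ is \emph{not} idempotent, since $d_{ii}\mathbf{1}=0$ while $d_{ii}$ acts as $\tfrac{|C_i|}{|C_i|-1}$ on the sum-zero subspace, so $d_{ii}=\tfrac{|C_i|}{|C_i|-1}\bigl(I_{|C_i|}-\tfrac{1}{|C_i|}J_{|C_i|}\bigr)$ and $d_{ii}^2=\tfrac{|C_i|}{|C_i|-1}d_{ii}$; the right verification is that $B_i$ is a nonzero scalar multiple of the block projection (which suffices, as only $W_i=\Span(B_i)$ enters the statement), and similarly your early phrase ``rank-one idempotents'' should read rank $|C_i|-1$.
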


\begin{Thm}\label{thm:cam2decomp}
    Let $G$ be a Camina $p-$group with nilpotency class $2$ of order $p^n$, and $|Z(G)|=p^k$. Then the Wedderburn decomposition of $T(G)$ is
    \[T(G)=V\oplus \bigoplus_{i=1}^{p^{n-k}-1}\bigoplus_{\alpha}\mathcal{W}_i(\alpha),\]
    where $V$ is the primary component and each $\mathcal{W}_i(\alpha)$ is the one-dimensional ideal generated by a matrix 
    \[W_i(\alpha)=\frac{1}{p^k}\sum_{a_1,a_2,\cdots, a_k\in \{0,1,\cdots, p-1\}} \prod_{j=1}^k \zeta^{\alpha_j a_j}(E_i^*A_jE_i^*)^{a_j},\] 
    where $\alpha=(\alpha_1,\alpha_2,\cdots, \alpha_k)$ with each $\alpha_j\in \{0,1,2,\cdots, p-1\}$ and not all $\alpha_j=0$.
\end{Thm}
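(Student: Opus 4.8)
The plan is to combine a dimension count with a direct check that the matrices $W_{i}(\alpha)$ are exactly the central primitive idempotents cutting out the non‑primary part of $T(G)$. (One may instead feed the decomposition $\mathcal{G}(G)=\mathcal{G}(Z(G))\wr\mathcal{G}(G/Z(G))$ of Corollary~\ref{cor:cam2wreath} into Theorem~5.3 of \cite{WreathProduct1}, as in \cite{Bastiandiss}; I prefer the self‑contained route below.) By Theorems~\ref{thm:tanaka} and \ref{thm:acclassify}, $T(G)$ is AC and triply regular, so $T(G)\cong M_{d+1}(\C)\oplus\C^{N}$, where $d+1$ is the number of conjugacy classes of $G$ and $N$ is the number of (necessarily one‑dimensional) non‑primary simple components. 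The Camina property gives $d+1=p^{n-k}+p^{k}-1$ (the $p^{k}$ singleton classes inside $Z(G)=G'$ together with the $p^{n-k}-1$ nontrivial cosets of $G'$), so $\dim V=(p^{n-k}+p^{k}-1)^{2}$; subtracting this from the value of $\dim T(G)$ in Theorem~\ref{cor:p2dim} yields $N=(p^{n-k}-1)(p^{k}-1)$, exactly the number of pairs $(i,\alpha)$ in the claimed decomposition. So it suffices to produce $(p^{n-k}-1)(p^{k}-1)$ mutually orthogonal central primitive idempotents of $T(G)$, each generating a one‑dimensional ideal distinct from the primary component $V$.

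Next I would read off the relevant block structure. Write $Z(G)\cong\Z_{p}^{k}$ (Proposition~\ref{prop:cam2center}), fix generators $t_{1},\dots,t_{k}$, and let $A_{j}$ be the adjacency matrix of the singleton class $\{t_{j}\}$. By Lemma~\ref{lem:cam2class}, $A_{j}=I_{p^{n-k}}\otimes B_{j}$, where $B_{j}$ is the $p^{k}\times p^{k}$ permutation matrix of translation by $t_{j}$ in $\mathcal{G}(Z(G))$; and for a class $C_{i}\not\subseteq Z(G)$ --- so $i$ runs over $1,\dots,p^{n-k}-1$ --- the matrix $E_{i}^{*}$ is precisely the projection onto the $i$-th diagonal Kronecker block. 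Hence $E_{i}^{*}A_{j}E_{i}^{*}$ equals $B_{j}$ on that block and is zero elsewhere, $\prod_{j}(E_{i}^{*}A_{j}E_{i}^{*})^{a_{j}}$ is translation by $t_{1}^{a_{1}}\cdots t_{k}^{a_{k}}$ there, and therefore $W_{i}(\alpha)$ is the zero‑extension to $M_{|G|}(\C)$ of $\tfrac{1}{p^{k}}\sum_{a}\zeta^{\sum_{j}\alpha_{j}a_{j}}\,\rho(t_{1}^{a_{1}}\cdots t_{k}^{a_{k}})$, where $\rho$ denotes the regular representation of $Z(G)$ on the block. For $\alpha\neq 0$ this is the central primitive idempotent of $\C[Z(G)]$ projecting onto the one‑dimensional isotypic component of the character $\chi_{\alpha}\colon t_{j}\mapsto\zeta^{\alpha_{j}}$. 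In particular each $W_{i}(\alpha)$ lies in $T(G)$, is a nonzero idempotent of rank $1$ supported on the block $(C_{i},C_{i})$, and --- since $\chi_{\alpha}$ is nontrivial --- every row and column sum of that block equals $\tfrac{1}{p^{k}}\sum_{g\in Z(G)}\chi_{\alpha}(g)=0$.

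It remains to verify that $\Span(W_{i}(\alpha))$ is a one‑dimensional non‑primary ideal, that distinct ones are orthogonal, and then to close the count. From $A_{j}=I_{p^{n-k}}\otimes B_{j}$ we get $A_{j}E_{i}^{*}=E_{i}^{*}A_{j}=E_{i}^{*}A_{j}E_{i}^{*}$ when $C_{j}\subseteq Z(G)$, so on the block $(C_{i},C_{i})$ both $W_{i}(\alpha)A_{j}$ and $A_{j}W_{i}(\alpha)$ equal $(W_{i}(\alpha)|_{C_{i}})B_{j}$; as $W_{i}(\alpha)|_{C_{i}}$ is a primitive idempotent of the commutative algebra generated by the $B_{j}$, this is a scalar multiple of $W_{i}(\alpha)$. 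When $C_{j}\not\subseteq Z(G)$, Lemma~\ref{lem:cam2outer} shows $W_{i}(\alpha)A_{j}$ and $A_{j}W_{i}(\alpha)$ are obtained from the block of $W_{i}(\alpha)$ by multiplying on one side by an all‑ones block, which kills them because the row and column sums of that block vanish; thus $W_{i}(\alpha)A_{j}=0=A_{j}W_{i}(\alpha)$. Finally $E_{m}^{*}W_{i}(\alpha)=\delta_{mi}W_{i}(\alpha)=W_{i}(\alpha)E_{m}^{*}$ since $W_{i}(\alpha)$ is supported on $(C_{i},C_{i})$. As $T(G)$ is generated by the $A_{j}$ and the $E_{m}^{*}$, an induction on word length gives $W_{i}(\alpha)T(G)=T(G)W_{i}(\alpha)=\C\,W_{i}(\alpha)$; hence $\Span(W_{i}(\alpha))$ is a two‑sided ideal isomorphic to $\C$, i.e. a simple component, and it is non‑primary because $\dim V=(p^{n-k}+p^{k}-1)^{2}>1$. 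Distinct $W_{i}(\alpha)$ are orthogonal (disjoint supports for $i\neq i'$; orthogonality of distinct character idempotents for fixed $i$), so we have produced $N$ distinct non‑primary simple components, and the count from the first paragraph forces $T(G)=V\oplus\bigoplus_{i,\alpha}\Span(W_{i}(\alpha))$.

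I expect the main obstacle to be the block‑structure step: extracting from Lemmas~\ref{lem:cam2class} and \ref{lem:cam3centeral} (together with the elementary abelianness of $Z(G)$) that $\{E_{i}^{*}A_{j}E_{i}^{*}\}_{C_{j}\subseteq Z(G)}$ realizes the regular representation of $Z(G)$ on the coset $C_{i}$, and the accompanying vanishing‑sum computation that forces $W_{i}(\alpha)$ to annihilate every ``outer'' adjacency matrix $A_{j}$ with $C_{j}\not\subseteq Z(G)$; once these are in place, idempotency, the ideal property, orthogonality, and the final count are all routine.
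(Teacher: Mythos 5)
Your proof is correct, but it takes a genuinely different route from the paper. The paper obtains Theorem \ref{thm:cam2decomp} by feeding the decomposition $\mathcal{G}(G)=\mathcal{G}(Z(G))\wr\mathcal{G}(G/Z(G))$ of Corollary \ref{cor:cam2wreath} into Theorem 5.3 of \cite{WreathProduct1} (modulo a change of wreath-product convention), deferring the explicit computation of the idempotents to \cite{Bastiandiss}; you instead verify the decomposition directly inside $T(G)$: the count $N=\dim T(G)-(d+1)^2=(p^{n-k}-1)(p^k-1)$ from Theorem \ref{cor:p2dim} together with almost commutativity (Theorems \ref{thm:tanaka}, \ref{thm:acclassify}) fixes the number of non-primary components, and Lemmas \ref{lem:cam2class} and \ref{lem:cam2outer} identify each $E_i^*A_jE_i^*$ (for central $C_j$) with the regular representation of $Z(G)$ on the coset $C_i$, so that $W_i(\alpha)$ is a rank-one character idempotent of $\C[Z(G)]$ planted in the $(C_i,C_i)$ block; your checks that it is killed by the ``outer'' adjacency matrices (vanishing row/column sums against all-ones blocks) and scaled by the central ones and the $E_m^*$, plus the word-length induction, correctly yield one-dimensional two-sided ideals, and orthogonality plus the count closes the argument. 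What the paper's route buys is uniformity (the same citation handles the Frobenius, class-$2$, class-$3$, and $\Z_3^2\rtimes Q_8$ cases once the wreath decompositions are known) and the idempotents come packaged by the general theorem; what your route buys is a self-contained proof using only results already established in this paper, an independent consistency check of the dimension formula, and a transparent interpretation of $W_i(\alpha)$ as the central primitive idempotents of $\C[Z(G)]$ localized at each outer class. Two small points you should make explicit: the matrices $A_1,\dots,A_k$ in the formula are the adjacency matrices of the singleton classes of a generating set $t_1,\dots,t_k$ of $Z(G)\cong\Z_p^k$ (Proposition \ref{prop:cam2center}), and the factor $(E_i^*A_jE_i^*)^{0}$ must be read as $E_i^*$ (the identity of the corner algebra $E_i^*T(G)E_i^*$) rather than $I_{|G|}$, since otherwise $W_i(\alpha)$ would not be supported on the $(C_i,C_i)$ block nor be idempotent; with that convention your argument goes through as written.
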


\begin{Thm}\label{thm:cam3decomp}
    Let $G$ be a Camina $p-$group with nilpotency class $3$, $|G:G'|=p^{2n}$, $|G':Z(G)|=p^n$ for some even integer $n$. Suppose $|Z(G)|=p^k$. Then the Wedderburn decomposition of $T(G)$ is
    \[T(G)=V\oplus \bigoplus_{i=1}^{p^{n}-1}\bigoplus_\alpha\mathcal{W}_i(\alpha)\oplus \bigoplus_{t=1}^{p^{2n}-1}\bigoplus_\beta \mathcal{X}_t(\beta)\oplus \bigoplus_{s=1}^{p^{2n}-1} \bigoplus_\gamma \mathcal{Y}_s(\gamma),\]
    where $V$ is the primary component, each $\mathcal{W}_i(\alpha)$ is a one-dimensional ideal generated by
    \[W_i(\alpha)=\frac{1}{p^k}\sum_{a_1,a_2,\cdots, a_k\in \{0,1,\cdots, p-1\}} \prod_{j=1}^k \zeta^{\alpha_j a_j}(E_i^*A_jE_i^*)^{a_j},\]
    where $\alpha=(\alpha_1,\alpha_2,\cdots, \alpha_k)$ with each $\alpha_j\in \{0,1,2,\cdots, p-1\}$ not all $\alpha_j$ are $0$. Each $\mathcal{X}_t(\beta)$ is a one-dimensional ideal generated by 
     \[X_t(\beta)=\frac{1}{p^k}\sum_{a_1,a_2,\cdots, a_k\in \{0,1,\cdots, p-1\}} \prod_{j=1}^k \zeta^{\beta_j a_j}(E_t^*A_jE_t^*)^{a_j}\]
     where $\beta=(\beta_1,\beta_2,\cdots, \beta_k)$ with each $\beta_j\in \{0,1,\cdots, p-1\}$ not all $\beta_j$ are zero. Each $\mathcal{Y}_s(\gamma)$ is a one-dimensional ideal generated by 
    \[Y_s(\gamma)=\frac{1}{p^{n+k}}\left(\sum_{j=0}^{p^k-1} E_s^*A_jE_s^*+\sum_{a_1,a_2,\cdots, a_n} p^k\prod_{j=1}^n \frac{\zeta^{\gamma_j a_j}}{p^{a_jk}}(E_s^*A_{p^k-1+j}E_s^*)^{a_j}\right)\]
    where $\gamma=(\gamma_1,\gamma_2,\cdots, \gamma_n)$ with each $\gamma_j\in \{0,1,\cdots, p-1\}$ not all $\gamma_j$ are zero. 
\end{Thm}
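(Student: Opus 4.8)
The plan is to derive this decomposition from the iterated wreath-product structure of Corollary \ref{cor:cam3wreath}, namely $\mathcal{G}(G) = \mathcal{G}(Z(G)) \wr \mathcal{G}(G'/Z(G)) \wr \mathcal{G}(G/G')$, combined with Theorem 5.3 of \cite{WreathProduct1}, which describes the Wedderburn components of the Terwilliger algebra of a wreath product $\mathcal{X} \wr \mathcal{Y}$ together with explicit generating idempotents, in terms of the Terwilliger algebras of $\mathcal{X}$ and of $\mathcal{Y}$. By Propositions \ref{thm:Camina3} and \ref{prop:cam2center}, the three factor schemes $\mathcal{G}(Z(G))$, $\mathcal{G}(G'/Z(G))$, $\mathcal{G}(G/G')$ are the group association schemes of elementary abelian $p$-groups of orders $p^k$, $p^n$, $p^{2n}$ respectively; for each of these the Terwilliger algebra is commutative and has only its primary component (Bannai--Munemasa, \cite{Bannaiarticle}, and the remark preceding Theorem \ref{cor:frobdecomp}). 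Thus the only nontrivial inputs to the recursion are these base primary components, and the whole decomposition is assembled by the two applications of the wreath formula required to build the triple wreath product $\mathcal{G}(Z(G)) \wr \bigl(\mathcal{G}(G'/Z(G)) \wr \mathcal{G}(G/G')\bigr)$.

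First I would set $\mathcal{A} = \mathcal{G}(Z(G))$, $\mathcal{B} = \mathcal{G}(G'/Z(G))$, $\mathcal{C} = \mathcal{G}(G/G')$ and recall precisely from \cite{WreathProduct1} which components $T(\mathcal{X} \wr \mathcal{Y})$ acquires: the primary component of $\mathcal{X} \wr \mathcal{Y}$; one inflated copy of each nonprimary component of $T(\mathcal{Y})$; and, indexed by the nonidentity subconstituents of $\mathcal{X} \wr \mathcal{Y}$ that ``see'' $\mathcal{X}$ nontrivially, one copy of (essentially) the nonprimary part of $T(\mathcal{X})$. Because $T(\mathcal{A})$, $T(\mathcal{B})$, $T(\mathcal{C})$ have no nonprimary part, applying this to $\mathcal{A} \wr (\mathcal{B} \wr \mathcal{C})$ leaves: the primary component $V$; one family of one-dimensional ideals, one for each nontrivial character of $Z(G) \cong \Z_p^k$ and each relevant subconstituent index lying above $Z(G)$; plus the nonprimary components of $T(\mathcal{B} \wr \mathcal{C})$, which by the same analysis split into the characters of $G'/Z(G) \cong \Z_p^n$ and those of $G/G' \cong \Z_p^{2n}$. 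Matching the three resulting families with the subconstituent ranges — the classes of $G' \setminus Z(G)$ (cosets of $Z(G)$, giving $i = 1, \dots, p^n - 1$) and the classes outside $G'$ (cosets of $G'$, giving the ranges $t, s = 1, \dots, p^{2n} - 1$) — yields exactly the index sets of the statement.

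Next I would make the generating idempotents explicit. The primitive idempotents of $\C[\Z_p^m]$ are the character idempotents $\frac{1}{p^m} \sum_{a} \overline{\chi(a)}\, a$; pulling these through the Kronecker-block descriptions of the adjacency matrices (Lemmas \ref{lem:cam2class}, \ref{lem:cam2class'} and their nilpotency-class-$3$ analogues, for which full details are in \cite{Bastiandiss}), one checks that inside the subconstituent algebra $E_i^* T(G) E_i^*$ the products $E_i^* A_j E_i^*$, for $A_j$ the adjacency matrix of a class contained in $Z(G)$, multiply exactly like the generators of $\C[\Z_p^k]$. Hence the character idempotents become the stated $W_i(\alpha)$ and $X_t(\beta)$, and the $Y_s(\gamma)$ are the corresponding character idempotents for the top layer $G/G'$. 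The extra term $\sum_{j=0}^{p^k-1} E_s^* A_j E_s^*$ and the normalizations $p^k / p^{a_j k}$ in $Y_s(\gamma)$ appear because, after the double wreath product, the adjacency matrices indexed $p^k - 1 + j$ carry a Kronecker factor of the all-ones matrix over the $Z(G)$-coordinate, so idempotency forces these correction factors; keeping track of such normalization constants across two nested wreath products — and through the fixed permutation matrix that reconciles our wreath convention with that of \cite{WreathProduct1} — is the one genuinely delicate point of the argument.

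Finally I would confirm that the listed matrices are pairwise orthogonal idempotents that together with $V$ sum to the identity, equivalently that their dimensions add to $\dim T(G)$ as computed in Theorem \ref{cor:p3dim}, and that each spans a two-sided ideal — the latter being automatic once one knows every nonprimary irreducible $T(G)$-module is one-dimensional, which holds by Theorems \ref{thm:tanaka} and \ref{thm:acclassify}. The main obstacle is precisely the careful propagation of the wreath-product idempotent formulas of \cite{WreathProduct1} through a \emph{double} wreath product while correctly accounting for the differing convention and the resulting normalization factors; this is the computation carried out in detail in \cite{Bastiandiss}.
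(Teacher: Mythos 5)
Your overall route is the same as the paper's: the paper obtains Theorem \ref{thm:cam3decomp} exactly by applying Theorem 5.3 of \cite{WreathProduct1} to the iterated wreath decomposition of Corollary \ref{cor:cam3wreath}, noting that its wreath convention differs from that of \cite{WreathProduct1} only by a permutation matrix, and deferring the explicit idempotent computation to \cite{Bastiandiss}; your added checks (triple regularity via Theorems \ref{thm:tanaka} and \ref{thm:acclassify}, and comparison with the dimension in Theorem \ref{cor:p3dim}) fit that framework.

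One identification in your narrative is wrong and should be fixed before you rely on it. The non-primary part of $T(\mathcal{G}(G'/Z(G))\wr\mathcal{G}(G/G'))$ does not ``split into the characters of $G'/Z(G)$ and those of $G/G'$,'' and the $\mathcal{Y}_s(\gamma)$ are not character idempotents for the top layer $G/G'$: since $T(\mathcal{G}(G/G'))$ has no non-primary component, the top layer contributes only the primary module together with the multiplicity $p^{2n}-1$ coming from the non-base blocks, i.e.\ the subconstituents $E_s^*$ for the classes $C_s\not\subseteq G'$. The characters $\gamma$ in $Y_s(\gamma)$ are the $p^n-1$ nontrivial characters of the \emph{middle} layer $G'/Z(G)\cong\Z_p^n$, which is consistent with $\gamma$ being an $n$-tuple and with the matrices $E_s^*A_{p^k-1+j}E_s^*$, $1\le j\le n$, being restrictions of adjacency matrices of classes in $G'\setminus Z(G)$. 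The count then works: $(p^n-1)(p^k-1)+(p^{2n}-1)(p^k-1)+(p^{2n}-1)(p^n-1)$ added to the primary dimension $(p^k+p^n+p^{2n}-2)^2$ reproduces exactly the formula of Theorem \ref{cor:p3dim}, whereas attaching characters of $G/G'\cong\Z_p^{2n}$ would give $(p^{2n}-1)^2$ extra components and your own final dimension check would fail. With this correction, the proposal coincides with the paper's (sketched) argument, whose full details are in \cite{Bastiandiss}.
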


\begin{Thm}\label{cor: 72,41 decomp}
            Let $G=\Z_3^2\rtimes Q_8$. Then the Wedderburn decomposition of $T(G)$ is
            \[T(G)\cong V\oplus Z_1\oplus Z_2 \oplus Z_3 \oplus \cdots \oplus Z_8 \]
            where each $Z_i$, $1\leq i\leq 8$ is a $1-$dimensional ideal of $T(G)$ and $V$ is the primary component. 
        \end{Thm}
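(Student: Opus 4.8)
The plan is to derive the decomposition as a formal consequence of almost commutativity, Tanaka's theorem, and two numerical inputs that are already established. By Theorem~\ref{thm:acclassify}, $G=\Z_3^2\rtimes Q_8$ gives an AC Terwilliger algebra, so Theorem~\ref{thm:tanaka} tells us that every non-primary irreducible $T(G)$-module is $1$-dimensional. Since $T(G)$ is semisimple, its Wedderburn decomposition therefore has the shape $T(G)\cong M_{d+1}(\C)\oplus\C^{\,m}$, where $M_{d+1}(\C)$ is the primary component $V$ (with $d+1$ the number of conjugacy classes of $G$, so that the primary module has dimension $d+1$ and $\dim V=(d+1)^2$) and each of the $m$ copies of $\C$ is a $1$-dimensional ideal. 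So everything reduces to computing $d+1$ and then reading off $m$ from $\dim T(G)$.

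First I would count the conjugacy classes of $G$. Using Proposition~\ref{prop:72,41wreath}, $\mathcal{G}(G)=\mathcal{K}_9\wr\mathcal{G}(\Z_2)\wr\mathcal{G}(\Z_2^2)$, and the definition of the wreath product shows that a wreath product of association schemes with $a$ and $b$ adjacency matrices has $a+b-1$ adjacency matrices; since $\mathcal{K}_9$, $\mathcal{G}(\Z_2)$ and $\mathcal{G}(\Z_2^2)$ have $2$, $2$ and $4$ classes respectively, $\mathcal{G}(G)$ has $2+2+4-2=6$ classes. (Equivalently, $G$ is the Frobenius group with kernel $\Z_3^2$ and complement $Q_8$; the kernel contributes the two $Q_8$-orbits $\{e\}$ and $\Z_3^2\setminus\{e\}$, and accounting for the remaining classes using the $5$ classes of $Q_8$ again yields $6$.) Hence $d+1=6$ and $\dim V=36$.

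Then I would invoke Theorem~\ref{cor:72,41 dim}, which gives $\dim T(G)=44$. Comparing dimensions, $m=44-36=8$, so $T(G)\cong V\oplus Z_1\oplus\cdots\oplus Z_8$ with each $Z_i$ a $1$-dimensional ideal, as claimed.

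There is no serious obstacle here: once AC is in hand the shape of the decomposition is forced by Theorem~\ref{thm:tanaka}, and the two integers $(d+1)^2=36$ and $44$ are already available (the latter from the Magma computation of Theorem~\ref{cor:72,41 dim}). If, beyond merely exhibiting the count, one wants the ideals $Z_i$ written down explicitly (as is done in the Camina cases, Theorems~\ref{thm:cam2decomp} and~\ref{thm:cam3decomp}), the one genuine computation is to apply Theorem~5.3 of~\cite{WreathProduct1} to the wreath factorization of Proposition~\ref{prop:72,41wreath}; translating between the two conventions for the wreath product is the only mildly delicate point, and it is handled by a permutation matrix exactly as in the other cases.
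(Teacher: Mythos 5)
Your argument is correct, but it reaches the statement by a different route than the paper. The paper obtains the decomposition (and, in principle, the explicit idempotents $Z_i$) by applying Theorem~5.3 of \cite{WreathProduct1} to the wreath factorization $\mathcal{G}(G)=\mathcal{K}_9\wr \mathcal{G}(\Z_2)\wr \mathcal{G}(\Z_2^2)$ of Proposition~\ref{prop:72,41wreath}, with the exact form of the $Z_i$ left to a Magma computation; you instead deduce the shape of the decomposition purely by counting. Your counting argument is sound: by Theorem~\ref{thm:acclassify} $T(G)$ is AC, so by Theorem~\ref{thm:tanaka} every non-primary irreducible module is $1$-dimensional, and since $T(G)$ is a semisimple subalgebra of $M_{|G|}(\C)$ acting faithfully on the standard module, its Wedderburn decomposition must be $M_{d+1}(\C)\oplus\C^m$ with $M_{d+1}(\C)=V$ of dimension $(d+1)^2$ (as stated in Section~2). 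Your class count $d+1=6$ is right, whether read off from the wreath factorization ($2+2+4-2=6$ classes) or from the Frobenius structure with kernel $\Z_3^2$ and complement $Q_8$ (classes of sizes $1,8,9,18,18,18$), so $\dim V=36$, and Theorem~\ref{cor:72,41 dim} gives $\dim T(G)=44$, forcing $m=8$. What each approach buys: yours is more elementary and essentially self-contained given the two Magma-verified inputs ($\dim T(G)=44$ and, if you use it, the wreath factorization), but it only yields the number of $1$-dimensional ideals; the paper's route through \cite{WreathProduct1} (or directly through \cite{Magma}) additionally produces the primitive idempotents generating the $Z_i$, which is what the surrounding theorems (\ref{cor:frobdecomp}--\ref{thm:cam3decomp}) record in the other cases. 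For the theorem as literally stated, your proof suffices.
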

The exact form of the $Z_i$ in Theorem \ref{cor: 72,41 decomp} can be found as an easy calculation using \cite{Magma}.

\printbibliography

\end{document}